\definecolor{red}{rgb}{0.7,0.15,0.15}
\definecolor{green}{rgb}{0,0.5,0}
\definecolor{blue}{rgb}{0,0,0.7}
\makeatletter \@addtoreset{equation}{section}
\newtheorem{theorem}{Theorem}[section]
\newtheorem{lemma}[theorem]{Lemma}
\newtheorem{proposition}[theorem]{Proposition}
\newtheorem{remark}[theorem]{Remark}
\def\beq{\begin{eqnarray}}
\def\eeq{\end{eqnarray}}
\def\be*{\begin{eqnarray*}}
\def\ee*{\end{eqnarray*}}
\def \C{\mathbb{C}}
\def \E{\mathbb{E}}
\def \N{\mathbb{N}}
\def \P{\mathbb{P}}
\def \R{\mathbb{R}}
\def\Fc{{\cal F}}
\def\Hc{{\cal H}}
\def\Lc{{\cal L}}
\def\Tc{{\cal T}}
\def\0{\mathbf{0}}
\def\normeL2#1{\left\|{#1}\right\|_{L^2}}
\def \Prod{\displaystyle\prod}
\def \Int{\displaystyle\int}
\def \Frac{\displaystyle\frac}
\def \Inf{\displaystyle\inf}
\def\trace{{\rm Tr}}
\def \1{\mathds{1}}
\def \d{{\rm d}}
\def \i{{\rm i}}
\def \Re{\mathrm{Re}}
\def\restrict#1{\raise-.5ex\hbox{\ensuremath|}_{#1}}
 \title{Infinite Dimensional Mean-Field Belavkin Equation: Well-posedness and Derivation}
\author{
 Anne de Bouard\footnote{CMAP CNRS, Ecole Polytechnique, I. P. Paris, anne.debouard@polytechnique.edu.} \and Gaoyue Guo\thanks{Universit\'e Paris--Saclay  CentraleSup\'elec, MICS and CNRS FR-3487, gaoyue.guo@centralesupelec.fr. }
    \and Théo Hérouard\thanks{CMAP CNRS, Ecole Polytechnique, I. P. Paris,
        theo.herouard@polytechnique.edu.  }
    }
             \date{\today}
\begin{document}

\maketitle
 
\begin{abstract}
We analyze the mean-field limit of a stochastic Schrödinger equation arising in quantum optimal control and mean-field games, where 
$N$ interacting particles undergo continuous indirect measurement. For the open quantum system described by Belavkin's filtering equation, we derive a mean-field approximation under minimal assumptions, extending prior results limited to bounded operators and finite-dimensional settings. By establishing global well-posedness via fixed-point methods—avoiding measure-change techniques—we obtain higher regularity solutions. Furthermore, we prove rigorous convergence to the mean-field limit in an infinite-dimensional framework. Our work provides the first derivation of such limits for wave functions in 
$L^2(\R^d)$, with implications for simulating and controlling large quantum systems.
\end{abstract}

\section{Introduction}

This paper presents a mathematical analysis and derivation of a mean-field equation arising in quantum optimal control and quantum mean-field games. We consider a system of $N$
interacting quantum particles in $\R^d$, constructed on the Hilbert space $L^2(\R^{dN}; \C)$. 

In the absence of external interactions, the system evolves under the Hamiltonian $H_N$
\begin{equation*}
    H_N := -\sum_{j=1}^N \Delta_{x_j} + \Frac{1}{N} \sum_{1 \leq i < j \leq N} V(x_i - x_j),
\end{equation*}
where $x_1, \ldots, x_N \in \R^d$ refer to the positions of the $N$ particles. In the Hamiltonian, $V$ is a two-body interaction potential independent of $N$. The dynamics of the system is then governed by the $N$-particles Schrödinger equation:
\begin{equation*}
    \i \partial_t  \psi_{N}(t) = H_N \psi_{N}(t),
\end{equation*}
for an initial condition $\psi_{N,0} \in L^2(\R^{Nd}; \C)$.

In quantum optimal control, continuous measurements introduce feedback but risk perturbing the system (the quantum Zeno effect). To mitigate this, indirect measurements are employed, leading to an open system described by Belavkin's quantum filtering equation due to Belavkin \cite{belavkin1992quantum}, which can be derived from quantum probability (see for instance \cite{bouten2007introduction}). For pure states, this reduces to a stochastic wave function evolution:
\begin{equation}
\label{eq:Npeq}
    \begin{aligned}
    \d \psi_{N}(t) &= -\i H_N \psi_{N}(t) \d t - \frac{1}{2}\sum_{j=1}^N(L_j^* L_j -2 \langle L_j \rangle_{\psi_{N}(t)} L_j +\langle L_j \rangle_{\psi_{N}(t)}^2) \psi_{N}(t)\d t\\
    &+ \sum_{j=1}^N(L_j -  \langle L_j \rangle_{\psi_{N}(t)})\psi_{N}(t) \d B^j_t.
    \end{aligned}
\end{equation}
Here, the operators $(L_j)_{j\leq N}$ model the interaction of the system with the environment, $L^*$ is the adjoint of the operator $L$, $\langle L_j \rangle_{\psi_{N}(t)}$ is the average value of $L_j$ in state $\psi_{N}(t)$ (where the rigorous definition is given by \eqref{eq:defaverage}) and $(B^j)_{j \leq N}$ is a sequence of independent Brownian motions. The subscript $j$ on the operator $L_j$ indicates that the operator only acts on the $j$-th particle.

For large $N$, such equations are very expensive to simulate. The scaling in $N^{-1}$ in front of the interaction potential characterizes mean-field models. With such a scaling, it is expected that the potential interaction becomes independent of the other particles when the number of particles tends towards infinity: this is the mean-field approximation. This approximation states that if $\psi_{N,0}$ is factorized, then the factorization is asymptotically preserved by the evolution. Having a mean field approximation can, for example, greatly reduce the cost of simulating or controlling large many body systems.

For closed quantum systems, a vast literature exists to show that a mean-field approximation is possible. Formally, for $N$ large, one can write $\psi_{N}(t) \simeq \varphi(t)^{\otimes N}$ where $\varphi$ is a solution of the Hartree equation
\begin{equation*}
    \i \partial_t \varphi(t) = - \Delta_x \varphi(t) + (V \star |\varphi(t)|^2 )\varphi(t).
\end{equation*}
Numerous results on the study of such nonlinear Schrödinger equations are presented in \cite{cazenave2003semilinear}.

This approximation result was first rigorously demonstrated by Hepp \cite{Hepp1974} under restrictive conditions on $V$. Now there are a large number of methods for proving such result, for more general interaction potential, and numerous reviews on those results (for example \cite{golse2016dynamics, rougerie2021scaling, benedikter2016effective}). For instance we can mention the method using the BBGKY hierarchy \cite{spohn1980kinetic, bardos2000weak, erdos2001derivation}, the coherent state method \cite{Hepp1974, rodnianski2009quantum} or the method developed by Pickl which consists on "Gronwalling" a good indicator of convergence \cite{pickl2011simple, knowles2010mean}.

For open quantum systems, in order to obtain a mean-field limit equation, a first assumption needs to be done on the coupling operators. The $N$-coupling operators will be linked to a similar operator $L$ acting on elements of $L^2(\R^d)$, and $L_j$ stands for the application of $L$ to the $j$-th particle. Kolokolstov, in \cite{kolokoltsov2021law, kolokoltsov2022quantum}, first derived the mean-field equation using Pickl's method in the case where the bosons are described in a finite dimensional Hilbert space, with a bounded Hamiltonian. The mean-field equation is
\begin{equation}
    \label{eq:mfeqintro}
    \begin{aligned}
     \d \varphi(t) &= -\i (-\Delta + V \star \E |\varphi(t)|^2 ) \varphi(t)\d t - \frac{1}{2}(L^* L -2 \langle L \rangle_{\varphi(t)} L  +\langle L \rangle_{\varphi(t)}^2)\varphi(t)\d t\\
     &+ (L - \langle L \rangle_{\varphi(t)})\varphi(t) \d \beta_t,
    \end{aligned}
\end{equation}
where $\beta$ is a Brownian motion.

In \cite{kolokoltsov2021law}, the derivation was done rigorously for a bounded Hamiltonian, a bounded operator $L$ and an interaction given by a Hilbert-Schmidt operator. Then, in \cite{kolokoltsov2022quantum} the derivation was extended for a bounded interaction potential with a bound useful only in a finite dimensional setting, i.e. when the state space $X$ is finite. Indeed, the final bound used for the convergence contains the following term:
\begin{equation*}
    \int_{X}\sqrt{\E[|\phi(x)|^4]}\d x,
\end{equation*}
where $\phi$ is the solution of the mean-field equation and $X$ is the state space. If such a bound can be easily obtained for a finite $X$, it must be studied in greater depth in the case of $\R^d$. In a similar finite dimensional setting, the control over Pickl's quantity has been obtained from Belavkin equation for density matrices \cite{chalal2023mean}. In \cite{kolokoltsov2022quantum, chalal2023mean} the main focus was to use the limiting equation for quantum mean-field game. The master equation for density matrices was first studied for an unbounded Hamiltonian and bounded interaction in \cite{kolokoltsov2025quantum}, with a proof inspired by \cite{mora2008basic}.

One of the main assumptions of those articles was the boundedness of the coupling operator $L$. For unbounded operator, the existence and uniqueness of solutions to the Belavkin equation is more involved and requires additional hypotheses (see for instance \cite{mora2008basic, kolokoltsov2025mathematicaltheoryquantumstochastic}).

Our purpose in the present paper is to study in depth equation \eqref{eq:mfeqintro}, from global existence to its derivation. The focus is on the dynamics of the wave function, which has more non-linearity than the Belavkin equation. We prove the global existence with some fixed point arguments, and without the usual change of measure and renormalisation method. This enables us to find solutions with a higher regularity under additional assumptions. By improving regularity, we are able to provide a bound for the mean-field limit that is better adapted to the infinite dimensional framework. Therefore, to the best of our knowledge, this paper is the first to rigorously prove the mean-field convergence of a quantum particle system with noise in the infinite dimensional case with an interaction potential in an unbounded domain.

\section{Preliminaries and main results}

Let $d$ denote the dimension of the environment. We define the usual Hilbert space $L^2(\R^d;\C)$ (denoted in the following only by $L^2(\R^d)$) of square integrable functions equipped with the symmetric scalar product
\begin{equation*}
    (u,v)_{L^2_x} = \Re \int_{\R^d} u(x) \overline{v}(x) \d x,
\end{equation*}
and denote $| \cdot |_{L^2_x}$ the associated norm. An element $\psi \in L^2(\R^d)$ will be called a wave function if it has unitary $L^2$-norm. For the study of the equation, we also define the following spaces for $t>0$:
\begin{equation*}
    C_t L^2 := C([0,t], L^2_x),
\end{equation*}
equipped with the norm:
\begin{equation*}
    |u|_{C_t L^2} := \sup_{s \in [0,t]} |u(s)|_{L^2_x}.
\end{equation*}
For $p \geq 1$, we define the space of complex valued $L^p$ functions in the following way
\begin{equation*}
    L^p(\R^d) := \{u: \R^d \to \C; \int_{\R^d}|u(x)|^p \d x < + \infty \},
\end{equation*}
with its usual norm. If the functions are real valued, we denote the space by $L^p(\R^d; \R)$. We also define the usual Sobolev spaces $W^{1,p}(\R^d)$ for $p \geq 1$ by
\begin{equation*}
    W^{1, p}(\R^d) = \{u \in L^p(\R^d); \nabla u \in L^p(\R^d; \C^d)\}.
\end{equation*}
When $p=2$, set $W^{1, 2}(\R^d) = H^1(\R^d)$ which is a Hilbert space with scalar product
\begin{equation*}
    (u,v)_{H^1_x} := (u,v)_{L^2_x} + (\nabla u, \nabla v)_{L^2_x},
\end{equation*}
and its associated norm denoted $| \cdot |_{H^1_x}$. We denote the corresponding spaces of continuous functions with values in $H^1$ for $t > 0$:
\begin{equation*}
    C_t H^1 := C([0,t], H^1_x),
\end{equation*}
equipped with the norm:
\begin{equation*}
    |u|_{C_t H^1} := \sup_{s \in [0,t]} |u(s)|_{H^1_x}.
\end{equation*}

For a bounded operator $L$ on $L^2(\R^d)$ and $\psi \in L^2(\R^d)$, we define the following function:
\begin{equation}
    \label{eq:defaverage}
    \langle L \rangle_{\psi} := (\psi, L \psi)_{L^2_x}.
\end{equation}
Note that, if $\psi$ is a wave function, then the above expression represents the average of $L$ in state $\psi$.

For $(A, |\cdot|_A)$ and $(B, |\cdot|_B)$ two Banach spaces, we denote by $\Lc(A, B)$ the set of bounded linear operators from $A$ to $B$. The usual operator norm is defined by
\begin{equation*}
    \| L \|_{\Lc(A,B)} := \sup_{|x|_A =1} |L x|_B.
\end{equation*}
When the operators are endomorphism in $A$, we denote the space of bounded operators by $\Lc(A)$ and its usual norm by $\| \cdot \|_{\Lc(A)}$. Moreover, we define the set of Hilbert-Schmidt operator in $A$ by $\Lc^2(A)$. This space is a Hilbert space with the following scalar product
\begin{equation*}
    (A,B)_{\Lc^2} := \trace A^* B.
\end{equation*}
We denote $\Lc^1(A)$ the set of trace class linear operators in $A$. We recall that the trace norm is given, for $L \in \Lc^1(A)$, by
\begin{equation*}
    \| L \|_{\Lc^1(A)} =   \trace |L|,
\end{equation*}
where $|L|$ stands for the modulus of $L$ (see \cite{kukush2019} for basic definitions and properties). If there is no ambiguity on the space $A$, we will note the operator norm $\| \cdot \|$, the Hilbert-Schmidt norm $\| \cdot \|_2$ and the trace norm $\| \cdot \|_1$. These three norms are linked by the following relationship, for $\rho \in \Lc(A)$
\begin{equation}
    \label{eq:inegnormop}
    \| \rho \| \leq \| \rho \|_2 \leq \| \rho \|_1.
\end{equation}

For an operator $L$ in $\Lc^2(L^2(\R^d))$, there exists a unique function $l \in L^2(\R^d \times \R^d)$ such that, for all $f \in L^2(\R^d)$
\begin{equation*}
    L f(\cdot) = \int_{\R^d} l(\cdot, y) f(y) \d y.
\end{equation*}
The function $l$ is called the integral kernel of the operator $L$.

Let $V\in L^{\infty}(\R^d)$ be an even interaction potential, and consider an unbounded free Hamiltonian $H : D(H) \subset L^2(\R^d) \rightarrow L^2(\R^d)$. Let $L$, a bounded operator on $L^2(\R^d)$, be the coupling operator. For the probabilistic framework, we consider a probability space $(\Omega, \Fc, \P)$, endowed with a filtration $(\Fc_t)_{t \geq 0}$. Let $(\beta_t)_{t\geq0}$ be a Brownian motion defined on the stochastic basis $(\Omega, \Fc, (\Fc_t)_{t \geq 0}, \P)$.  The general form of the mean-field equation is then
\begin{equation}
    \label{eq:mfeq}
    \begin{aligned}
     \d u(t) &= \left(-\i (H + V \star \E |u(t)|^2 ) - \frac{1}{2}(L^* L -2 \langle L \rangle_{u(t)} L +\langle L \rangle_{u(t)}^2)\right)u(t)\d t \\
&+ (L - \langle L \rangle_{u(t)})u(t) \d \beta_t.
    \end{aligned}
\end{equation}
The unknown solution $u$ of \eqref{eq:mfeq} is a complex random field defined on the stochastic basis $(\Omega, \Fc, (\Fc_t)_{t \geq 0}, \P)$. We first establish the global existence of the solution to \eqref{eq:mfeq}.

\begin{theorem}
\label{thm:meq}
Fix $T_0>0$. Assume that $H : D(H) \subset L^2(\R^d) \to L^2(\R^d)$ is a self-adjoint operator, that $L \in \Lc(L^2(\R^d))$ and that the potential $V \in L^{\infty}(\R^d)$ is even. Moreover, assume that $u_0$ is a $\Fc_0$-measurable random variable such that $|u_0|_{L^2} = 1$ almost surely. Then there exists a unique solution $u \in L^2(\Omega, C([0,T_0]; L^2(\R^d)))$ of \eqref{eq:mfeq} such that $u(0) = u_0$. Moreover, almost surely:
\begin{equation*}
    |u(t)|_{L^2_x} = 1 \text{, for all } t \in [0, T_0].
\end{equation*}
\end{theorem}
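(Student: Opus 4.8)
The plan is to solve \eqref{eq:mfeq} in mild form by a contraction argument after a suitable truncation, and then to propagate the $L^2$-norm in order to reach $[0,T_0]$. Since $H$ is self-adjoint, Stone's theorem provides a strongly continuous unitary group $(e^{-\i tH})_{t\in\R}$ on $L^2(\R^d)$, and a mild solution is a fixed point in $\Ec_T:=L^2(\Omega;C([0,T];L^2(\R^d)))$ of
\[
\Gamma(u)(t):=e^{-\i tH}u_0+\int_0^t e^{-\i(t-s)H}F(u(s))\,\d s+\int_0^t e^{-\i(t-s)H}G(u(s))\,\d\beta_s,
\]
where $F(u):=-\i(V\star\E|u|^2)u-\tfrac12(L^*L-2\langle L\rangle_u L+\langle L\rangle_u^2)u$ and $G(u):=(L-\langle L\rangle_u)u$. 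These terms are well defined on $\Ec_T$: one has $\E|u(s)|^2\in L^1(\R^d)$ with $\|\E|u(s)|^2\|_{L^1}=\E|u(s)|_{L^2}^2$, so $V\star\E|u(s)|^2\in L^\infty$ with norm at most $\|V\|_{L^\infty}\E|u(s)|_{L^2}^2$, while $|\langle L\rangle_u|\le\|L\|_{\Lc(L^2)}|u|_{L^2}^2$ controls the remaining, locally bounded terms. For the stochastic convolution I would use that $e^{-\i tH}$ is a group, writing $\int_0^t e^{-\i(t-s)H}G(u(s))\,\d\beta_s=e^{-\i tH}M_t$ with $M_t:=\int_0^t e^{\i sH}G(u(s))\,\d\beta_s$ a genuine $L^2(\R^d)$-valued martingale, so that unitarity of $e^{-\i tH}$ reduces the maximal estimate to Doob's inequality and the Itô isometry.

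Because $\langle L\rangle_u L u$ and $\langle L\rangle_u^2 u$ grow super-linearly in $|u|_{L^2}$, the map $\Gamma$ does not close directly at the level of second moments; I would first solve the truncated equation in which $\langle L\rangle_u$ is replaced by $\langle L\rangle_{\theta(u)}$ and $\E|u|^2$ by $\E|\theta(u)|^2$, where $\theta$ is the ($1$-Lipschitz) radial projection of $L^2(\R^d)$ onto the closed ball of radius $2$. The truncated coefficients then have linear growth and are Lipschitz on bounded sets of $\Ec_T$; the only delicate point is the mean-field coefficient $V\star\E|\theta(u)|^2$, which couples different $\omega$ and is handled through $\|\E|\theta(u)|^2-\E|\theta(v)|^2\|_{L^1}\le 4(\E|u-v|_{L^2}^2)^{1/2}$ followed by a further application of Cauchy--Schwarz in $\Omega$. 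On the ball $\{u\in\Ec_T:\|u\|_{\Ec_T}\le 2\}$, for $T$ small enough depending only on $\|L\|_{\Lc(L^2)}$ and $\|V\|_{L^\infty}$, the truncated $\Gamma$ is a self-map and a strict contraction (here we use $|u_0|_{L^2}=1$ a.s.), so Banach's theorem yields a unique local solution with adapted, almost surely continuous paths; moreover, the linear growth of the truncated coefficients propagates the (finite) moments of $u_0$ of every order to $u$.

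Next I would establish conservation of the $L^2$-norm on $[0,T]$. Since $u(t)$ need not belong to $D(H)$, I would pass to the interaction picture $w(t):=e^{\i tH}u(t)$, which solves an equation with bounded coefficients---in particular $w$ is a genuine $L^2(\R^d)$-valued Itô process---so that Itô's formula applies directly to $|w(t)|_{L^2}^2=|u(t)|_{L^2}^2$. Using self-adjointness of $H$ and the reality of the multiplier $V\star\E|u|^2$ (so that neither the Hamiltonian nor the potential contributes to $\d|u|_{L^2}^2$), the identity $(u,L^*Lu)_{L^2}=|Lu|_{L^2}^2$, and the reality of $\langle L\rangle_u$, a direct computation shows that the drift $2(u,F(u))_{L^2}+|G(u)|_{L^2}^2$ cancels identically and that
\[
\d\big(|u(t)|_{L^2}^2-1\big)=c\big(u(t)\big)\,\d\beta_t,\qquad |c(u)|\le C_{\|L\|}\,\big||u|_{L^2}^2-1\big| ;
\]
since $|u_0|_{L^2}^2-1=0$ a.s., a Gronwall estimate on $\E\big(|u(t)|_{L^2}^2-1\big)^2$ (valid because $u$ inherits all moments of $u_0$) forces $|u(t)|_{L^2}=1$ for all $t\in[0,T]$ almost surely. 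In particular $\theta(u(t))=u(t)$, so $u$ in fact solves \eqref{eq:mfeq}.

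Finally, since $|u(T)|_{L^2}=1$ a.s., the same local existence time $T$ (which depends only on $\|L\|_{\Lc(L^2)}$ and $\|V\|_{L^\infty}$) lets us restart from $u(T)$; after $\lceil T_0/T\rceil$ iterations we obtain a solution on all of $[0,T_0]$, along which $|u(t)|_{L^2}=1$. For uniqueness, any solution $\tilde u\in L^2(\Omega;C([0,T_0];L^2(\R^d)))$ of \eqref{eq:mfeq} satisfies the same norm identity: applying the interaction-picture Itô computation to $\tilde u$ and localizing via the stopping time $\inf\{t:|\tilde u(t)|_{L^2}>K\}$---note the potential, a real multiplier, does not affect the norm, so no mean-field subtlety arises here---and letting $K\to\infty$ gives $|\tilde u(t)|_{L^2}=1$ for all $t$ a.s. Both solutions then lie on the unit sphere, where every coefficient of \eqref{eq:mfeq} is Lipschitz with a fixed constant, so $g(t):=\E\sup_{r\le t}|u(r)-\tilde u(r)|_{L^2}^2$ satisfies $g(t)\le C\int_0^t g(s)\,\d s$ and hence vanishes. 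I expect the two main obstacles to be (i) the unbounded $H$ in the norm-conservation step---circumvented by the unitary change of variables $w=e^{\i tH}u$, which turns \eqref{eq:mfeq} into an equation with bounded coefficients amenable to Itô's formula---and (ii) the combined super-linear and mean-field nature of the nonlinearity, handled by the truncation $\theta$ together with the Cauchy--Schwarz-in-$\Omega$ bound for $V\star\E|u|^2$.
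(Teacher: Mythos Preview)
Your overall strategy—truncate, solve by contraction, prove $L^2$-norm conservation, remove the truncation, iterate in time—is the paper's strategy, but the implementation differs in two notable ways. First, the paper does not run a single fixed point on the full equation: it decouples the mean-field coefficient by solving an intermediate equation with a fixed deterministic $\xi\in C([0,T];L^1)$ in place of $\E|u|^2$ (Proposition~\ref{prop:meq-interm}), and only then performs a second contraction on the map $\hat\Tc:\xi\mapsto\E|u(\xi)|^2$ in $C([0,T];L^1)$. Second, the paper's truncation multiplies the nonlinear drift and noise by $\Theta_R(u|_{[0,t]})=\theta(|u|_{C_tL^2}/R)$, a cutoff of the running path-sup norm, which forces almost-sure bounds on $|u(s)|_{L^2}$ wherever the cutoff is nonzero. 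Your interaction-picture device $w=e^{\i tH}u$ for the norm computation is a clean way to sidestep the regularization the paper defers to \cite{debouard2003snls}.

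Your truncation, however, leaves a gap in the contraction step. Replacing $\langle L\rangle_u$ by $\langle L\rangle_{\theta(u)}$ makes that scalar bounded and globally Lipschitz, but the outer factor $u$ is left untruncated: the difference $\langle L\rangle_{\theta(u)}^2 u-\langle L\rangle_{\theta(v)}^2 v$ contains the cross term $(\langle L\rangle_{\theta(u)}^2-\langle L\rangle_{\theta(v)}^2)v$, of size $C\,|u-v|_{L^2}\,|v|_{L^2}$. On the ball $\{\|u\|_{\Ec_T}\le 2\}$ you have no almost-sure control on $|v(t,\omega)|_{L^2}$, so $\E[\sup_t|u-v|_{L^2}^2|v|_{L^2}^2]$ cannot be bounded by $C\,\|u-v\|_{\Ec_T}^2$ without fourth moments you have not yet constructed. (The mean-field term $(V\star\E|\theta(u)|^2)u$ does close, because the factor $\E|\theta(u)|^2$ is deterministic in $\omega$ and can be pulled out of the outer expectation; it is precisely the \emph{random} scalar $\langle L\rangle_{\theta(u)}$ that breaks the estimate.) The fix is either to use a path-wise cutoff as the paper does—multiplying the nonlinear terms by $\Theta_R$, which yields global Lipschitz constants since $\Theta_R\neq 0$ forces $|u|_{L^2}\le 2R$ a.s.—or to run the fixed point in $L^p(\Omega;C_TL^2)$ for some $p\ge 4$ from the start. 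With either modification the remainder of your argument (norm conservation, removal of the truncation, iteration, uniqueness via norm conservation plus Lipschitz on the sphere) goes through and gives a valid, somewhat more direct proof than the paper's two-layer contraction.
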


Then, we are interested in the convergence of the $N$-particles dynamics to the mean-field dynamics. We will restrict ourselves to the case where $d=3$ and a specific Hamiltonian $H = - \Delta$. For the convergence, we will need more regular solutions, which means adding new assumptions on the coupling operator $L$. For that, we will prove a bound that shows the preservation of the $H^1(\R^3)$ regularity for the solution of \eqref{eq:mfeq}.

\begin{theorem}
\label{thm:meqh1}
Fix $T_0>0$. Assume that $H = - \Delta$, that the potential $V \in L^{\infty}(\R^3)$ and that $u_0 \in L^2(\Omega, H^1(\R^3))$ is a $\Fc_0$-measurable random variable such that $|u_0|_{L^2} = 1$ almost surely. Moreover, assume that the operator $L\in \Lc(L^2_x)$ satisfies $[\nabla, L ] \in \Lc( H^1_x, L^2_x)$ and $[\nabla, L^* L ] \in \Lc( H^1_x, L^2_x)$. Then the unique solution $u$ of \eqref{eq:mfeq} such  that $u(0) = u_0$ given by Theorem \ref{thm:meq} has continuous $H^1$-valued paths, i.e. $u \in L^2(\Omega, C([0,T_0]; H^1(\R^3)))$.
\end{theorem}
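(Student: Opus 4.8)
The plan is to rebuild the solution on short time intervals by a fixed-point argument run directly in $L^2(\Omega, C([0,T];H^1(\R^3)))$, to patch the local pieces together using an a priori $H^1$-bound, and then to invoke the $L^2$-uniqueness already established in Theorem~\ref{thm:meq} to conclude that the object so obtained is exactly the solution $u$. The additional hypotheses on $L$ are precisely what is needed to keep the right-hand side of \eqref{eq:mfeq} stable in $H^1$: since the Schrödinger group $e^{\i t\Delta}$ commutes with $\nabla$ but provides no smoothing, one must check that each term of the drift and of the diffusion maps $H^1_x$ into $H^1_x$.

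The structural observations that make this work are the following. First, $\langle L\rangle_{u(t)}$, defined by \eqref{eq:defaverage}, is a real scalar independent of $x$, so $\nabla(\langle L\rangle_{u(t)}u(t))=\langle L\rangle_{u(t)}\nabla u(t)$ and likewise for the $\langle L\rangle_{u(t)}^2$-term. Second, the assumptions $[\nabla,L]\in\Lc(H^1_x,L^2_x)$ and $[\nabla,L^*L]\in\Lc(H^1_x,L^2_x)$ give $\nabla(Lu)=L\nabla u+[\nabla,L]u$ and $\nabla(L^*Lu)=L^*L\nabla u+[\nabla,L^*L]u$, so $L$ and $L^*L$ are bounded on $H^1_x$, with the commutator norms replacing $\|L\|$, $\|L^*L\|$ wherever a derivative falls on $Lu$ or $L^*Lu$. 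Third, although $V$ lies only in $L^\infty$, whenever $u\in L^2(\Omega,H^1)$ one has $\nabla\,\E|u|^2=2\Re\,\E(\bar u\,\nabla u)\in L^1(\R^3)$ by Cauchy--Schwarz (using $|u|_{L^2}=1$), hence $V\star\E|u|^2\in W^{1,\infty}$ and multiplication by it is bounded on $H^1_x$. Consequently every nonlinear term of \eqref{eq:mfeq} is well defined and bounded on bounded subsets of $L^2(\Omega,C([0,T];H^1_x))$, with ``Lipschitz'' constants polynomial in the radius---the same structure used for Theorem~\ref{thm:meq}, with $e^{\i t\Delta}$ now an isometry of $H^1_x$.

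One then shows that, on a ball of $L^2(\Omega,C([0,T];H^1_x))$ and for $T$ small (depending on the radius), the Duhamel map associated with \eqref{eq:mfeq} is a contraction; the stochastic-convolution term is controlled by the Burkholder--Davis--Gundy / factorization estimate for $e^{\i t\Delta}$ on $H^1_x$. The unique fixed point lies in $L^2(\Omega,C([0,T];H^1_x))$ and, by uniqueness in $L^2(\Omega,C([0,T];L^2_x))$, coincides with $u|_{[0,T]}$. To reach $[0,T_0]$ one needs the step length $T$ not to collapse, which follows from an a priori bound: estimating the mild form of \eqref{eq:mfeq} in $H^1_x$ with BDG (again using that $e^{\i t\Delta}$ is unitary on $H^1_x$), together with $|u(t)|_{L^2_x}=1$, the commutator hypotheses and the $W^{1,1}$-control of $\E|u(t)|^2$, yields
\[
\E|u(t)|_{H^1_x}^2 \;\le\; C\,\E|u_0|_{H^1_x}^2 \;+\; C\int_0^t\big(1+\E|u(s)|_{H^1_x}^2\big)\,\d s,
\]
so Grönwall gives $\sup_{t\le T_0}\E|u(t)|_{H^1_x}^2<\infty$. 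Finitely many local steps then cover $[0,T_0]$; the continuity of the $H^1$-valued paths is built into the fixed-point space, equivalently it follows from the strong $H^1$-continuity of $t\mapsto e^{\i t\Delta}u_0$ and of the two (deterministic and stochastic) convolutions with $H^1$-valued integrands, the latter via the factorization method.

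The main obstacle is the interaction between the unbounded operator $-\Delta$ and the $H^1$-estimate: one cannot directly apply Itô's formula to $|\nabla u(t)|_{L^2_x}^2$, since that brings in $\nabla^2u$, so the a priori bound must be obtained at the level of the mild formulation with stochastic-convolution estimates---or, alternatively, through a Yosida regularization $-\Delta\rightsquigarrow-\Delta_n$ of the generator, for which Itô's formula is legitimate and the commutator structure makes the $H^1$-bound uniform in $n$, after which one passes to the limit (identifying the limit with $u$ by $L^2$-uniqueness) and uses lower semicontinuity of the norm. A secondary point requiring care is exactly that $V$ is merely bounded, so the $W^{1,\infty}$-regularity of $V\star\E|u(t)|^2$ has to be extracted from the regularity of $u$ itself via $\nabla\,\E|u|^2=2\Re\,\E(\bar u\,\nabla u)$; note that the restriction to dimension three plays no essential role in this particular argument.
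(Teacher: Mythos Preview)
Your scheme differs from the paper's, which does \emph{not} run a contraction in $L^2(\Omega,C_TH^1)$. Instead the paper reuses the two-level structure from Theorem~\ref{thm:meq}: for the intermediate equation with a prescribed $\xi\in C([0,T_0];W^{1,1})$ it shows that the Picard iterates of the \emph{truncated} map $\Tc_1$ (which converge in $L^2(\Omega,C_TL^2)$) are uniformly bounded in $L^2(\Omega,L^\infty(0,T;H^1))$ via an $H^1$-estimate on $\Tc_1$ (Lemma~\ref{lem:boundreg}), and passes to the weak-$*$ limit; then, for the full equation, it iterates $\xi_{n+1}=\E|u(\xi_n)|^2$, closes the $H^1$-bound through $|\nabla\xi_{n}|_{L^1_x}\le 2(\E|\nabla u(\xi_{n-1})|^2_{L^2_x})^{1/2}$, and again takes a weak-$*$ limit. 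Continuity in $H^1$ is read off from the mild form afterwards.

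Your direct route is natural, but the sentence ``every nonlinear term of \eqref{eq:mfeq} is bounded and Lipschitz on bounded subsets of $L^2(\Omega,C([0,T];H^1_x))$'' is where the argument breaks. The scalar $\langle L\rangle_{u}=(u,Lu)_{L^2_x}$ is quadratic in $u$ \emph{pathwise}, so $F_2(u)u=Lu-\langle L\rangle_u u$ is cubic pathwise; for instance $\E\sup_t|F_2(u(t))u(t)|^2_{L^2_x}$ contains $\E\sup_t|u(t)|^6_{L^2_x}$, which a ball in $L^2(\Omega,C_TH^1)$ does not control, and the would-be Lipschitz estimate fails for the same reason (the expectation of the product does not factor). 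This is exactly why the proof of Theorem~\ref{thm:meq} introduces the pathwise cutoff $\Theta_R(u|_{[0,t]})=\theta(|u|_{C_tL^2}/R)$ before running any fixed point. Your argument is repaired by inserting the same truncation: run the contraction in $L^2(\Omega,C_TH^1)$ for the truncated equation (where all nonlinearities \emph{are} globally Lipschitz, and your commutator observations handle the gradient), identify the resulting solution with $u$ via $L^2$-uniqueness, and then the conservation $|u(t)|_{L^2_x}=1$ makes the cutoff inactive; with truncation the contraction time is independent of the data, so the extension to $[0,T_0]$ is immediate and your separate a~priori bound is not needed. Note that the Hartree term $(V\star\E|u|^2)u$ is unaffected by this issue, since the inner expectation makes it at most quadratic in the $L^2(\Omega,C_TH^1)$-norm; your treatment of that term and of the $W^{1,1}$-control of $\E|u|^2$ is correct.
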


Before stating the convergence theorem, we specify the type of convergence required, which needs the introduction of density operators. A trace class operator is called a density operator if it is symmetric, positive and of trace one. Let $\psi_N$ be a solution of the $N$-body dynamics given by \eqref{eq:Npeq}. Now, let us define the associated density operator $\rho^N_t$ by its integral kernel 
\begin{equation}
    \label{eq:defrhon}
    \rho^N_t(X_N, Y_N) = \psi_N(X_N, t) \overline{\psi_N(Y_N, t)},
\end{equation}
for $X_N, Y_N \in \R^{dN}$. One can easily show that with this definition, $\rho^N_t$ is a symmetric positive trace class operator. Since the dynamics of $\psi_N$ is given by \eqref{eq:Npeq}, the evolution of $\rho^N$ is then governed by the so-called Belavkin equation:
\begin{equation}\label{eq:BelavkinNp}
    \begin{aligned}
     \mathrm{d} \rho_t^{N} &= -\i{[H_{N},\rho_{t}^{N}]}\mathrm{d}t
+ \sum_{j=1}^N\Big({L}_j\rho_{t}^{N}{L}^{*}_{j} - \frac{1}{2}\big\{{{ L^*_j}}{ L}_{j},\rho_{t}^{N}\big\}\Big)\mathrm{d}t \\
&+\sum_{j=1}^N \Big({ L}_j\rho_{t}^{N}
+ \rho_{t}^{N}{L}^{*}_j -tr\big(({L}_j + {L}_j^{*})\rho_{t}^{N}\big)\rho_{t}^{N}\Big)\mathrm{d}B_t^{j}.
    \end{aligned}
\end{equation}
Moreover, denote by $\phi^{MF,j}$ a solution to the mean-field equation \eqref{eq:mfeq} driven by the Brownian motion $B^j$. For such solution, we define the projector operator in the following way, for $t \geq 0$
\begin{equation*}
    p_{j,t} := |\phi^{MF,j}_t \rangle \langle \phi^{MF,j}_t| = ( \phi^{MF,j}_t, \cdot)_{L^2_x}\phi^{MF,j}_t.
\end{equation*}
The remaining part of this article aims to show that, in a certain sense, the mean-field approximation holds for $N$ large:
\begin{equation*}
    \rho^N_t \simeq \bigotimes_{j=1}^N p_{j,t}.
\end{equation*}
However, the convergence in $\Lc^2$-norm cannot be expected and thus a weaker notion of convergence needs to be introduced. For $J \subset \{1, \ldots, N \}$ and $X_N \in R^{dN}$, we define $X_J = (x_j)_{j\in J} \in \R^{|J|d}$ and $\hat{X}^J_N$ the variables of the particles outside $J$. Then, for an operator $\rho \in \Lc^1(L^2(\R^{d N}))$ we introduce the partial trace over the variables outside $J$ with its integral kernel by
\begin{equation*}
    \big( \trace_{\{1, \ldots, N \} \backslash J} \rho \big)(X_J, Y_J) := \int_{\R^{d(N-|J|)}} \rho(X^N, Y^N) \mathds{1}_{ \{\hat{Y}^N_J =\hat{X}^N_J \} } \d \hat{X}^N_J,
\end{equation*}
where $X_J, Y_J \in R^{d |J|}$. From this definition, the partial trace of a density operator $\rho^N$ with respect to all the variables outside $J$ is given by
\begin{equation*}
    \rho^{N,J}_t := \trace_{\{1, \ldots, N \} \backslash J} \rho^N_t.
\end{equation*}
This is the so-called reduced marginal operator of the particles $(x_j)_{j \in J}$. Then, for $J $ subset of $\{1, \ldots, N \}$, we can define the trace norm distance estimate for this marginal by 
\begin{equation*}
    R_{N,J}(t) := \E \|\rho^{N,J}_t - \bigotimes_{j \in J} p_{j,t}\|_{\Lc^1(L^2(\R^{d |J|}))}.
\end{equation*}
To study those indicators, we will follow the proof of \cite{kolokoltsov2022quantum}, itself inspired from the method of Pickl for the mean-field convergence in the case of a deterministic closed system. We can then give a rigorous sense to the mean-field approximation.
\begin{theorem}
\label{thm:cvgmf} Let $T_0>0$ and $p \geq 4$ and assume that $H,L$ and $V$ satisfie the assumptions of Theorem \ref{thm:meqh1}. Let $(B^j)_{J \in \N}$ be a sequence of independent Brownian motions on the underlying stochastic basis. Let $\phi_0 \in L^p(\Omega, H^1(\R^d))$ be a $\Fc_0$-measurable random variable such that $|\phi_0|_{L^2_x} = 1$ almost surely. For $N \geq 2$, denote by $\Psi_N$ the solution of \eqref{eq:Npeq} with $\Psi_N(0) = \phi_0^{\otimes N}$ and $\rho^N$ its associated density operator (given by \eqref{eq:defrhon}). Finally, for $j \in \N$, denote by $\phi^{MF,j}$ the solution of \eqref{eq:mfeq} driven by the Brownian motion $B^j$ with $\phi^{MF,j}(0) = \phi_0$. Moreover suppose that $V$ vanishes at infinity, i.e.
\begin{equation*}
    \lim_{|x| \rightarrow + \infty} V(x) = 0.
\end{equation*}
Then, for any $J$ finite subset of $\N$, 
\begin{equation}
    \label{eq:cvgtrp}
    \lim_{N \rightarrow + \infty} \sup_{t \in [0, T_0]} R^{N, J}(t) = 0.
\end{equation}
In particular, for the first marginal
\begin{equation}
    \label{eq:cvgpmarg}
    \lim_{N \rightarrow + \infty} \sup_{t \in [0, T_0]} \E \|\rho^{N,1}_t - |\phi^{MF,1}_t \rangle \langle \phi^{MF,1}_t| \|_1 = 0.
\end{equation}
\end{theorem}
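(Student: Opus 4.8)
The plan is to run Pickl's counting method, in its reduced-density-matrix (trace-norm) formulation, adapted to the stochastic Belavkin dynamics. For $j\in\N$ and $t\ge0$, let $q_{j,t}:=\mathrm{Id}-p_{j,t}$ be the orthogonal projection on $L^2_x$ complementary to $p_{j,t}$, acting on the $j$-th variable, put $\widehat n_t:=\sum_{k=1}^N q_{k,t}$, and set
\[
\alpha_N(t):=\E\big\langle\Psi_N(t),\,q_{1,t}\,\Psi_N(t)\big\rangle_{L^2(\R^{dN})}.
\]
Since $\Psi_N(0)=\phi_0^{\otimes N}$ is symmetric, $H_N$ is permutation invariant, and simultaneously permuting the particles and the driving motions $(B^j)$ leaves \eqref{eq:Npeq} invariant, the pair $(\Psi_N,(B^j)_j)$ is jointly exchangeable, so $\alpha_N(t)=\tfrac1N\E\langle\Psi_N(t),\widehat n_t\Psi_N(t)\rangle$. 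A standard comparison of the reduced density matrices of a pure state with this functional (as in \cite{kolokoltsov2022quantum}, going back to \cite{pickl2011simple}) gives, for any finite $J\subset\N$,
\[
\Big\|\rho^{N,J}_t-\bigotimes_{j\in J}p_{j,t}\Big\|_{\Lc^1}\ \le\ C_{|J|}\Big(\sum_{j\in J}\langle\Psi_N(t),q_{j,t}\Psi_N(t)\rangle\Big)^{1/2},
\]
whence, by Jensen's inequality and exchangeability, $R^{N,J}(t)\le C_{|J|}\sqrt{\alpha_N(t)}$. It therefore suffices to prove $\sup_{t\in[0,T_0]}\alpha_N(t)\to0$ as $N\to\infty$; the marginal bound \eqref{eq:cvgpmarg} is the case $J=\{1\}$.

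\textbf{A priori bounds and Itô expansion.} First I would record, as the $L^p$-analogue of Theorems \ref{thm:meq}--\ref{thm:meqh1} (obtained from the same fixed-point estimates under $\phi_0\in L^p(\Omega,H^1)$), the moment bounds $\sup_{t\le T_0}\E|\phi^{MF,j}_t|_{H^1_x}^p<\infty$; in particular $n_t:=\E|\phi^{MF,1}_t|^2$ is well defined and independent of $j$, lies in $L^1\cap L^3(\R^3)$ uniformly in $t\le T_0$ (using $H^1\hookrightarrow L^6$), so $V\star n_t$ is bounded uniformly in $t$. Second, an Itô estimate on $|\nabla_1\Psi_N(t)|_{L^2(\R^{dN})}^2$, using $H=-\Delta$, $V\in L^\infty$, $L\in\Lc(L^2_x)$, $[\nabla,L]\in\Lc(H^1_x,L^2_x)$ and exchangeability, gives the uniform-in-$N$ bound $\sup_{t\le T_0}\E|\nabla_1\Psi_N(t)|_{L^2}^2\le C$; these bounds make all the stochastic integrals below genuine martingales. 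Then I would apply Itô's formula to $t\mapsto\langle\Psi_N(t),\widehat n_t\Psi_N(t)\rangle$, using \eqref{eq:Npeq} for $\Psi_N$ and \eqref{eq:mfeq} for each $\phi^{MF,k}$ (hence for $p_{k,t}$, $q_{k,t}$), take expectations to kill the martingale part, and split $\tfrac{\d}{\d t}\E\langle\Psi_N,\widehat n_t\Psi_N\rangle$ into four groups: (i) the kinetic terms from $-\i\sum_j[-\Delta_j,\cdot]$ and from the $-\i(-\Delta)$-part of $\d p_{k,t}$, which cancel exactly since $[-\Delta_j,q_{k,t}]=0$ for $j\ne k$ and the diagonal contributions are opposite; (ii) the interaction terms, pairing the commutator of $\widehat n_t$ with $\tfrac1N\sum_{i<l}V(x_i-x_l)$ against the $V\star n_t$-contributions from $\d p_{k,t}$, which reassemble (using that $V$ is even and exchangeability) into a multiple of $\sum_{k}\langle\Psi_N,[q_{k,t},W_k]\Psi_N\rangle$ with $W_k:=\tfrac1N\sum_{l\ne k}V(x_k-x_l)-(V\star n_t)(x_k)$; (iii) the dissipative terms from the $L_j^*L_j$, $\langle L_j\rangle_{\Psi_N}L_j$, $\langle L_j\rangle_{\Psi_N}^2$ pieces of \eqref{eq:Npeq} together with the corresponding $L^*L$, $\langle L\rangle_{\phi^{MF,k}_t}L$, $\langle L\rangle_{\phi^{MF,k}_t}^2$ pieces of $\d p_{k,t}$; and (iv) the Itô cross-variation terms $\sum_k\d\langle\Psi_N\rangle\!\cdot\!\d q_{k,t}$, present precisely because the noise on particle $k$ in \eqref{eq:Npeq} and the noise in $\phi^{MF,k}$ are driven by the same $B^k$.

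\textbf{Estimating the groups.} Group (i) vanishes. For group (ii)---the technical core---its contribution to $\tfrac{\d}{\d t}\alpha_N$ is, by Cauchy--Schwarz and exchangeability, bounded by $C\sqrt{\alpha_N(t)}\,\big(\E\|W_1\Psi_N(t)\|^2\big)^{1/2}$, so it remains to show $\E\|W_1\Psi_N\|^2\le C\alpha_N(t)+\eta_N(t)$ with $\sup_{t\le T_0}\eta_N(t)\to0$. Expanding $W_1^2$ and inserting $\mathrm{Id}=(p_{l,t}+q_{l,t})(p_{l',t}+q_{l',t})$ around the products $V(x_1-x_l)V(x_1-x_{l'})$ reduces this to a diagonal $O(1/N)$ term; terms carrying a factor $q_l$ or $q_{l'}$, bounded by $C\alpha_N(t)$ using only $V\in L^\infty$ and $|\Psi_N|_{L^2}=1$; and a fully projected term which must be shown to tend to $0$---this is where the $H^1$-regularity of $\phi^{MF,j}$ (Theorem \ref{thm:meqh1}, via $H^1\hookrightarrow L^6$) and the hypothesis $V(x)\to0$ as $|x|\to\infty$ enter, by splitting $V=V\1_{\{|x|\le R\}}+V\1_{\{|x|>R\}}$, estimating the first piece through the uniform $L^3$-bound on $n_t$ and the Sobolev embedding and the second by $\sup_{|x|>R}|V|$, then letting $N\to\infty$ and afterwards $R\to\infty$ (a suitably weighted counting operator in place of $\widehat n_t$ can be used if needed to avoid circularity in the two-particle terms). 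Groups (iii) and (iv) are bounded by $C\alpha_N(t)+C/N$, using $L\in\Lc(L^2_x)$, Cauchy--Schwarz, $|\Psi_N|_{L^2}=|\phi^{MF,j}|_{L^2}=1$ and the moment bounds above, after controlling the scalar mismatch $\langle L_j\rangle_{\Psi_N(t)}-\langle L\rangle_{\phi^{MF,j}_t}=\trace\big(L(\rho^{N,j}_t-p_{j,t})\big)$, which is $O\big(\langle\Psi_N,q_{j,t}\Psi_N\rangle^{1/2}\big)$; a useful check is that for $N=1$ and $V\equiv0$ uniqueness forces $\Psi_1\equiv\phi^{MF,1}$ and hence $\alpha_1\equiv0$, which is exactly the cancellation these groups must display.

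\textbf{Conclusion and main obstacle.} Collecting the estimates yields $\tfrac{\d}{\d t}\alpha_N(t)\le C\alpha_N(t)+\eps_N(t)$ with $\sup_{t\le T_0}\eps_N(t)\to0$ as $N\to\infty$, and $\alpha_N(0)=0$ since $\Psi_N(0)=\phi_0^{\otimes N}$ and $\phi^{MF,j}(0)=\phi_0$. Gronwall's lemma then gives $\sup_{t\in[0,T_0]}\alpha_N(t)\le e^{CT_0}\int_0^{T_0}\eps_N(s)\,\d s\to0$, and the first paragraph converts this into \eqref{eq:cvgtrp} and \eqref{eq:cvgpmarg}. I expect the main obstacles to be, first, the cross-variation terms of group (iv), which are absent from the deterministic closed-system theory and require carefully tracking the Brownian motion $B^k$ shared by the $N$-body system and the $k$-th mean-field copy; and second, the interaction term of group (ii) in infinite dimensions: it is precisely to control its fully projected part that the $H^1$-propagation of Theorem \ref{thm:meqh1} is proved and that $V$ is assumed to vanish at infinity, the latter being used to upgrade the crude $L^\infty$ bound into a genuinely vanishing error via a spatial cutoff and the Sobolev embedding. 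The hypothesis $p\ge4$ is what the Cauchy--Schwarz splittings and the martingale property in the Itô expansion consume.
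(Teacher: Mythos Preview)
Your overall strategy---reduce to the one-particle Pickl indicator $\alpha_N(t)=I^{N,1}(t)$, apply It\^o, kill the martingale, bound the drift by $C\alpha_N+\eps_N$ with $\eps_N\to0$, then Gronwall---is exactly the paper's. The execution of the two nontrivial groups is organized differently, however.

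For the interaction term you first Cauchy--Schwarz to $\sqrt{\alpha_N}\,(\E\|W_1\Psi_N\|^2)^{1/2}$ and then expand $W_1^2$ with $p_l/q_l$ insertions. The paper instead keeps the commutator $\trace\big(i[q_1,W_1]\rho^N\big)$ intact and first splits $W_1$ additively as
\[
\frac{1}{N}\sum_{j\ge2}\big(V_{1,j}-V^{|\phi^{MF,j}_t|^2}_1\big)\;+\;V^{\delta^N_t}_1\;+\;O(1/N),
\qquad \delta^N_t:=\frac{1}{N-1}\sum_{j\ge2}|\phi^{MF,j}_t|^2-\E|\phi^{MF}_t|^2.
\]
The law-of-large-numbers piece $V^{\delta^N_t}$ is bounded through $\E|\delta^N_t|_{L^2}\le C/\sqrt{N}$ (Gagliardo--Nirenberg on the $H^1$ moment estimate---this is precisely where $p\ge4$ enters, not the martingale property) combined with the spatial cutoff from $V\to0$ at infinity; the particle-wise piece is handled by the full Pickl $p_j/q_j$ decomposition on the commutator, including the weighted operator $\widehat{m_N}^{\pm1/2}$. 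Your route through $\|W_1\Psi_N\|^2$ can be made to work, but the claim that single-$q_l$ terms are ``bounded by $C\alpha_N$ using only $V\in L^\infty$'' is too quick: a bare $q_l$ only yields $\sqrt{\hat I^{N,l}}$, so you genuinely need the weighted counting operator there as well (as your parenthetical hints). The paper's additive splitting avoids squaring $W_1$ and makes this step shorter.

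For your groups (iii)+(iv) the paper collects everything into a single expression $P^{(2)}_t$ and applies a dedicated algebraic lemma of Kolokoltsov (reproved here as Proposition~\ref{prop:lemKol}) giving $|P^{(2)}_t|\le C\|L\|^2\hat I^{N,1}_t$ pointwise; your scalar-mismatch observation is one ingredient, but the bound relies on specific cancellations in that combination of traces that are not captured by Cauchy--Schwarz alone. Finally, the uniform-in-$N$ bound on $\E|\nabla_1\Psi_N|_{L^2}^2$ is not needed: the paper controls the stochastic integrals using only boundedness of $L$ and the trace-norm preservation $\|\rho^N_t\|_1=\|p_{1,t}\|_1=1$.
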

The limit given in \eqref{eq:cvgpmarg} states that, when $N$ goes to infinity, the behaviour of one particle can be approximated by the solution of the mean-field equation \eqref{eq:mfeq}.

\begin{remark}
The well-posedness of equation \eqref{eq:Npeq} is not explicitly written in the article. However, using similar arguments as in the proof of Proposition \ref{prop:meq-interm} with $\xi = 0$ and a finite number of Brownian motions, one can prove the global well-posedness of \eqref{eq:Npeq}. The almost-sure preservation for all time of the $L^2$-norm also follows in a similar fashion.
\end{remark}

This paper is organized as follows. In section \ref{sec:mfeq}, we will prove the global  well-posedness of \eqref{eq:mfeq}. For that, we introduce an intermediate equation where the Hartree non-linearity is decoupled from the rest of the equation, which is then studied with a truncation procedure. The proof of Theorem \ref{thm:meq} will follow, by a fixed point procedure on the intermediate equation. In section \ref{sec:propreg} we first discuss the existence of solutions to \eqref{eq:mfeq} in $H^1(\R^3)$ under more restrictive assumptions. Second, moments estimates on the $H^1$-norm of the solution will be obtained. Finally in section \ref{sec:mflim}, we will prove Theorem \ref{thm:cvgmf} under these assumptions, which is the rigorous derivation of the limiting equation from a many body system, using Pickl's convergence indicator. In the appendix, we prove some technical results needed in the course of the proofs.

\section{Mean-field equation}\label{sec:mfeq}

This section is devoted to prove Theorem \ref{thm:meq}. We fix a stochastic basis $(\Omega, \Fc, (\Fc_t)_{t \geq 0}, \P)$ and a Brownian motion $\beta$ adapted to the filtration $(\Fc_t)_{t \geq 0}$.

To demonstrate the theorem, we will start by studying the following intermediate equation
\begin{equation}
\label{eq:interm}
    \begin{aligned}
     \d u(t) &= -\i (H + V \star \xi_t ) u(t) \d t - \frac{1}{2}\left( L ^* L -2  \langle L \rangle_{u(t)} L + \langle L \rangle_{u(t)}^2 \right) u(t)\d t \\ 
     &+(L -  \langle L \rangle_{u(t)})u(t) \d \beta_t,
    \end{aligned}
\end{equation}
where $\xi$ is a deterministic function which has sufficient regularity.

\begin{proposition}
\label{prop:meq-interm}
Fix $T_0>0$ and assume that $H, L, V$ and $u_0$ satisfy the assumptions of Theorem \ref{thm:meq}. If further $\xi \in C([0,T], L^1(\R^d, \R))$ then, there exists a unique solution $u \in L^2(\Omega, C([0,T_0]; L^2(\R^d)))$ of \eqref{eq:interm} such that $u(0) = u_0 $. Moreover, almost surely, 
\begin{equation*}
    |u(t)|_{L^2_x} = 1 \text{, for all } t \in [0, T_0].
\end{equation*}
\end{proposition}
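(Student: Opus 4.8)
The plan is to remove the unbounded operator $H$ by passing to the interaction picture, solve a truncated equation by a contraction argument, and then use the $L^2$-norm conservation built into the Belavkin nonlinearity to discard the truncation. First I would exploit that $H$ is self-adjoint: by Stone's theorem $S(t):=e^{-\i Ht}$ is a strongly continuous unitary group on $L^2(\R^d)$, and for $\xi\in C([0,T_0],L^1(\R^d,\R))$ the function $W_t:=V\star\xi_t$ is real-valued, lies in $L^{\infty}(\R^d)$, depends continuously on $t$, and obeys $\sup_{t\le T_0}\|W_t\|_{L^{\infty}}\le\|V\|_{L^{\infty}}\sup_{t\le T_0}|\xi_t|_{L^1}$. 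Setting $v(t):=S(t)^{*}u(t)$, the usual variation-of-constants correspondence shows that $u$ is a mild solution of \eqref{eq:interm} with $u(0)=u_0$ if and only if $v$ solves, strongly (equivalently mildly) in $L^2(\R^d)$,
\begin{equation*}
\d v(t)=\Big(-\i W'_tv(t)-\tfrac12\big(L_t^{*}L_t-2\ell_t L_t+\ell_t^2\big)v(t)\Big)\d t+\big(L_t-\ell_t\big)v(t)\,\d\beta_t,\qquad v(0)=u_0,
\end{equation*}
where $L_t:=S(t)^{*}LS(t)$, $W'_t:=S(t)^{*}W_tS(t)$ are bounded operators, strongly continuous in $t$, with $\|L_t\|=\|L\|$, $W'_t$ self-adjoint, and $\ell_t:=(v(t),L_tv(t))_{L^2_x}=\langle L\rangle_{S(t)v(t)}\in\R$. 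Since $|v(t)|_{L^2_x}=|u(t)|_{L^2_x}$, it suffices to prove existence, uniqueness and unit norm for this equation, whose only non-Lipschitz ingredient is the scalar $\ell_t$, quadratic in $v$.

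Second, I would truncate: fix a smooth $\chi_R:[0,\infty)\to[0,1]$ with $\chi_R\equiv1$ on $[0,R]$ and $\chi_R\equiv0$ on $[2R,\infty)$, and replace $\ell_t$ by $\ell_t^{R}:=(v(t),L_tv(t))_{L^2_x}\,\chi_R(|v(t)|_{L^2_x})$. From $|\ell^{R}(v)|\le 4R^2\|L\|$ and $|(v,L_tv)_{L^2_x}-(w,L_tw)_{L^2_x}|\le 2\|L\|(|v|_{L^2_x}\vee|w|_{L^2_x})|v-w|_{L^2_x}$ one checks that the truncated drift and diffusion are globally Lipschitz in $v$, with constant depending only on $R$ and $\|L\|$, and of linear growth, uniformly in $t\in[0,T_0]$. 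A standard Banach fixed-point argument in $L^2(\Omega;C([0,T_0];L^2(\R^d)))$ — with a weighted norm, or on a short interval then concatenated, the local existence time not shrinking — then yields a unique $v_R\in L^2(\Omega;C([0,T_0];L^2(\R^d)))$ solving the truncated equation.

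Third, which is the crux, I would show $|v_R(t)|_{L^2_x}=1$ for all $t\in[0,T_0]$, a.s., whenever $R\ge1$. As the truncated equation is a genuine $L^2(\R^d)$-valued Itô equation with bounded coefficients $b,\sigma$, Itô's formula applies to $N(t):=|v_R(t)|_{L^2_x}^2$ with no regularity caveat: $\d N=(2(v_R,b)_{L^2_x}+|\sigma|_{L^2_x}^2)\,\d t+2(v_R,\sigma)_{L^2_x}\,\d\beta$. Writing $\ell:=(v_R(t),L_tv_R(t))_{L^2_x}$ and $\chi:=\chi_R(|v_R(t)|_{L^2_x})$, and using $\int_{\R^d}\overline{v_R}\,W'_tv_R\,\d x\in\R$ (so that the $-\i W'_t$ term contributes $0$), $(v_R,L_t^{*}L_tv_R)_{L^2_x}=|L_tv_R|_{L^2_x}^2$, and $(v_R,L_tv_R)_{L^2_x}=\ell$, the drift bracket collapses:
\begin{equation*}
2(v_R,b)_{L^2_x}+|\sigma|_{L^2_x}^2=\big(-|L_tv_R|_{L^2_x}^2+2\ell^2\chi-\ell^2\chi^2 N\big)+\big(|L_tv_R|_{L^2_x}^2-2\ell^2\chi+\ell^2\chi^2 N\big)=0,
\end{equation*}
leaving $\d N(t)=2\ell_t\big(1-\chi_R(\sqrt{N(t)})\,N(t)\big)\,\d\beta_t$ with $N(0)=1$. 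Since $\chi_R(1)=1$, the coefficient vanishes at $N=1$; localizing via $\tau_\eps:=\inf\{t:|N(t)-1|\ge\eps\}$ for $\eps\in(0,1)$, on $[0,\tau_\eps]$ one has $\chi_R(\sqrt N)=1$ and $|\ell_t|\le\|L\|N(t)\le\|L\|(1+\eps)$, so $Y:=N-1$ solves $\d Y_t=-2\ell_tY_t\,\d\beta_t$, $Y_0=0$; Gronwall applied to $t\mapsto\E[Y_{t\wedge\tau_\eps}^2]$ forces $Y\equiv0$ on $[0,\tau_\eps]$, hence $\tau_\eps=+\infty$ a.s. by continuity of $N$, i.e. $N\equiv1$. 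Therefore $\chi_R(|v_R(t)|_{L^2_x})\equiv1$, so $v_R$ solves the untruncated equation, and $u:=S(\cdot)v_R$ is the claimed solution of \eqref{eq:interm} with $|u(t)|_{L^2_x}=1$ for all $t\in[0,T_0]$, a.s.

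Finally, for uniqueness, if $u_1,u_2\in L^2(\Omega;C([0,T_0];L^2(\R^d)))$ both solve \eqref{eq:interm} from $u_0$, the same Itô computation with $\chi\equiv1$ applied to $|S(t)^{*}u_i(t)|_{L^2_x}^2$ shows that each has $L^2$-norm $\equiv1$; on the unit sphere all coefficients of \eqref{eq:interm} are globally Lipschitz (e.g. $|\langle L\rangle_uu-\langle L\rangle_vv|_{L^2_x}\le 3\|L\||u-v|_{L^2_x}$ when $|u|_{L^2_x}=|v|_{L^2_x}=1$, and likewise for the $\langle L\rangle^2$ term), so writing $w:=u_1-u_2$ in mild form and using $\|S(t)\|=1$, the Itô isometry and Cauchy--Schwarz yield $\E|w(t)|_{L^2_x}^2\le C\int_0^t\E|w(s)|_{L^2_x}^2\,\d s$, and Gronwall gives $w\equiv0$. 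The one genuinely delicate point is that a mild solution need not take values in $D(H)$, so Itô's formula for its $L^2$-norm is not directly available; the interaction-picture reduction — legitimate precisely because $H$ is self-adjoint, hence $S(t)$ unitary — eliminates the unbounded operator and converts the norm conservation into the exact algebraic cancellation displayed above, which is the structural heart of the proof.
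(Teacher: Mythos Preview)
Your argument is correct and reaches the same conclusion as the paper, but the route is genuinely different in one structural point: the paper never leaves the mild formulation. It sets up a contraction for $\Tc_R u(t)=S(t)u_0+\cdots$ directly, with the truncation $\Theta_R$ depending on $\sup_{s\le t}|u(s)|_{L^2_x}$, and then applies It\^o's formula to $|u^R(t)|_{L^2_x}^2$; since $u^R$ need not lie in $D(H)$, this step is justified by an approximation argument (citing \cite{debouard2003snls}). After the same algebraic cancellation you exhibit, the paper obtains $\d|u^R|_{L^2_x}^2=\Theta_R\langle L\rangle_{u^R}(1-|u^R|_{L^2_x}^2)\d\beta_t$ and concludes via a short SDE lemma (Lemma~\ref{lem:SDE}). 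Uniqueness is then deduced by observing that any $L^2(\Omega;C_{T_0}L^2)$ solution also preserves the norm and hence coincides with the truncated solution for $R=1$.

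Your interaction-picture reduction $v(t)=S(t)^*u(t)$ is the cleaner alternative: it removes $H$ altogether, so the truncated equation for $v_R$ is a genuine Hilbert-space It\^o SDE with bounded coefficients, and It\^o's formula for $|v_R|_{L^2_x}^2$ needs no regularisation. This is exactly the ``delicate point'' you flag, and you resolve it more transparently than the paper does. The price is that the coefficients $L_t,W'_t$ become time-dependent, which is harmless here but would have to be tracked if one later wants commutator estimates with $\nabla$ (as in Section~\ref{sec:propreg}); the paper's choice to stay in the original variables makes that extension slightly more direct. One cosmetic slip: your localisation step needs $\sqrt{1+\eps}\le R$, so take $R>1$ rather than $R\ge1$ (or simply invoke local Lipschitzness of $N\mapsto 1-\chi_R(\sqrt N)N$ at $N=1$ to get pathwise uniqueness of the scalar SDE).
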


With this proposition in hand, we will perform a fixed point argument on $\xi$ to prove the global existence result for \eqref{eq:mfeq}.

\subsection{A truncated equation}\label{subsection:truncequation}

To show the existence and the uniqueness of the solution of equation \eqref{eq:interm}, it will be convenient to study a truncated equation. For that, we take a function $\theta \in C^{\infty}(\R_+, \R_+)$ with compact support, bounded by $1$ and such that
\begin{equation*}
    \theta(x) = 0, \text{ for } x\geq 2 \text{ and } \theta(x) = 1, \text{ for } x\leq 1.
\end{equation*}
Fix $R >0$, we define the following truncation function by:
\begin{equation*}
    \Theta_R(u\restrict{[0,t]}) = \theta\left(\Frac{|u|_{C_t L^2}}{R} \right).
\end{equation*}
To simplify notations, we define the following operator-valued functions, for $X \in L^2(\R^d)$:
\begin{equation}
\label{eq:defF1}
    F_1(X) = L ^* L -2 \langle L \rangle_{X} L +\langle L \rangle_{X}^2 I_d,
\end{equation}
and 
\begin{equation}
    \label{eq:defF2}
    F_2(X) = L - \langle L \rangle_{X}I_d.
\end{equation}
Then, using $F_1$ and $F_2$, we can define the following truncated equation
\begin{equation}
\label{eq:trunc}
    \begin{aligned}
    \d u^R(t) &= -\i (H + V \star \xi_t ) u^R(t) \d t 
    - \frac{1}{2}\Theta_R(u^R\restrict{[0,t]})^2 F_2(u^R(t))u^R(t)\d t\\
    &+ \Theta_R(u^R\restrict{[0,t]}) F_2(u^R(t))u^R(t) \d \beta_t,
    \end{aligned}
\end{equation}
where $\xi \in C([0,T], L^1(\R^d, \R))$.

\begin{proposition}\label{prop:eqtrunc}
Let $T_0 >0$ and $R\geq 1$. Assume that $H : D(H) \subset L^2(\R^d) \to L^2(\R^d)$ is an unbounded self-adjoint operator, that $L \in \Lc(L^2(\R^d))$ and that $V \in L^{\infty}(\R^d)$, the potential, is even. Moreover, assume that $u_0 \in L^2(\Omega, L^2(\R^d))$ is a $\Fc_0$-measurable random variable and that $\xi \in C([0,T], L^1(\R^d, \R))$. Then there exists a unique solution $u^R$ in $L^2(\Omega, C([0,T_0], L^2(\R^d)))$ to \eqref{eq:trunc} such that $u(0) = u_0$.
\end{proposition}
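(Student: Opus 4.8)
The plan is to recast the truncated equation \eqref{eq:trunc} in mild (Duhamel) form and to solve it by a fixed-point argument in the Banach space $\Hc_{T_0}$ of $(\Fc_t)_{t\ge0}$-adapted processes $u$ with continuous $L^2_x$-valued paths and $\|u\|_{\Hc_{T_0}}^2 := \E\,|u|_{C_{T_0}L^2}^2 < \infty$. Since $H$ is self-adjoint, Stone's theorem provides a strongly continuous unitary group $(e^{-\i H t})_{t\in\R}$ on $L^2(\R^d)$, so that \eqref{eq:trunc} with $u^R(0)=u_0$ is equivalent to the identity
\[
u(t) = e^{-\i H t}u_0 \;-\; \i\int_0^t e^{-\i H(t-s)}(V\star\xi_s)\,u(s)\,\d s \;-\; \tfrac12\int_0^t e^{-\i H(t-s)}\Theta_R(u\restrict{[0,s]})^2 F_1(u(s))u(s)\,\d s \;+\; \int_0^t e^{-\i H(t-s)}\Theta_R(u\restrict{[0,s]})F_2(u(s))u(s)\,\d\beta_s ,
\]
whose right-hand side we take as the definition of a map $\mathcal{T}u$; a fixed point of $\mathcal{T}$ is the sought solution.

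First, one records that the truncation renders the nonlinear drift and diffusion coefficients \emph{bounded}, not merely of polynomial growth. Indeed $\Theta_R(u\restrict{[0,s]})\neq0$ forces $|u(s)|_{L^2_x}\le|u|_{C_sL^2}\le 2R$, and since $|\langle L\rangle_X|\le\|L\|\,|X|_{L^2_x}^2$ this yields $\|F_1(u(s))\|_{\Lc(L^2_x)}+\|F_2(u(s))\|_{\Lc(L^2_x)}\le C(R,\|L\|)$, whence $|\Theta_R(u\restrict{[0,s]})^2 F_1(u(s))u(s)|_{L^2_x}+|\Theta_R(u\restrict{[0,s]})F_2(u(s))u(s)|_{L^2_x}\le C(R,\|L\|)$. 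For the Hartree term one only needs $\|V\star\xi_s\|_{L^\infty}\le\|V\|_{L^\infty}\,|\xi_s|_{L^1}$, which is bounded uniformly on $[0,T_0]$ because $\xi\in C([0,T_0],L^1(\R^d,\R))$, so that multiplication by $V\star\xi_s$ is a bounded operator on $L^2_x$. Exploiting that $e^{-\i H\cdot}$ is unitary (hence drops out of $L^2_x$-norm estimates) and, for the stochastic convolution, the identity $\int_0^t e^{-\i H(t-s)}\Phi(s)\,\d\beta_s = e^{-\i H t}\int_0^t e^{\i H s}\Phi(s)\,\d\beta_s$ — which turns it into a genuine $L^2_x$-valued square-integrable martingale to which the Burkholder--Davis--Gundy inequality applies — one checks that $\mathcal{T}$ maps $\Hc_{T_0}$ into itself; adaptedness and continuity of the paths of $\mathcal{T}u$ follow from strong continuity of the group and the standard regularity of Bochner and stochastic convolutions.

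The crux is the global Lipschitz estimate
\[
\big|\Theta_R(u\restrict{[0,s]})^2 F_1(u(s))u(s)-\Theta_R(v\restrict{[0,s]})^2 F_1(v(s))v(s)\big|_{L^2_x}\;\le\;C(R,\|L\|)\,|u-v|_{C_sL^2},
\]
and its analogue for $\Theta_R F_2$. This is where the dependence of $\Theta_R$ on the whole past path $u\restrict{[0,s]}\in C([0,s],L^2_x)$ must be handled: one writes the left-hand side as $\Theta_R(u)^2\big(F_1(u(s))u(s)-F_1(v(s))v(s)\big)+\big(\Theta_R(u)^2-\Theta_R(v)^2\big)F_1(v(s))v(s)$, controls the first piece using $\Theta_R(u)^2\le1$, the bound $|u(s)|_{L^2_x}\le 2R$ on the support of $\Theta_R(u)$ (so also $|v(s)|_{L^2_x}\le 2R+|u-v|_{C_sL^2}$ there), and the elementary inequality $|\langle L\rangle_X-\langle L\rangle_Y|\le\|L\|(|X|_{L^2_x}+|Y|_{L^2_x})|X-Y|_{L^2_x}$ with its square counterpart; and controls the second piece using $|\Theta_R(u)^2-\Theta_R(v)^2|\le\tfrac{2\|\theta'\|_\infty}{R}|u-v|_{C_sL^2}$ and the fact that $F_1(v(s))v(s)$ carries nonzero weight only where $|v|_{C_sL^2}\le 2R$, on which it is bounded by $C(R,\|L\|)$. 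I expect this case analysis — in particular keeping track of which of the two trajectories is being truncated when their truncation regions disagree — to be the only genuine difficulty.

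Finally, combining the preceding with the Burkholder--Davis--Gundy inequality (after pulling the unitary group out of the stochastic integral as above) and the Cauchy--Schwarz inequality in time, one obtains, writing $h(t):=\E\,|u-v|_{C_tL^2}^2$, an estimate $\E\,|\mathcal{T}u-\mathcal{T}v|_{C_tL^2}^2\le C\int_0^t h(s)\,\d s$ with $C=C(R,\|L\|,\|V\|_{L^\infty},\xi,T_0)$. Iterating this inequality and using that $h$ is nondecreasing gives $\E\,|\mathcal{T}^n u-\mathcal{T}^n v|_{C_{T_0}L^2}^2\le\frac{(CT_0)^n}{n!}\,\|u-v\|_{\Hc_{T_0}}^2$, so $\mathcal{T}^n$ is a strict contraction on $\Hc_{T_0}$ for $n$ large. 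The contraction mapping principle then produces a unique fixed point $u^R\in\Hc_{T_0}=L^2(\Omega,C([0,T_0],L^2(\R^d)))$ within the class of adapted processes; it solves \eqref{eq:trunc} with $u^R(0)=u_0$, which is the asserted existence and uniqueness.
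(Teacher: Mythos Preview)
Your overall strategy --- mild formulation plus Banach fixed point in $L^2(\Omega,C_TL^2)$ --- is exactly the paper's. The only structural difference is that you globalize by iterating ($\mathcal{T}^n$ contracts via the factorial bound), whereas the paper shows $\mathcal{T}$ contracts on a short interval and then patches, using that the short time does not depend on $u_0$; either is standard and either works once the Lipschitz estimate is in hand.

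That Lipschitz step, however, has a real gap --- and you correctly flag it as the genuine difficulty, but the resolution you sketch does not go through. In your decomposition
\[
\Theta_R(u)^2\big(F_1(u)u-F_1(v)v\big)+\big(\Theta_R(u)^2-\Theta_R(v)^2\big)F_1(v)v,
\]
the claim that the second summand ``carries nonzero weight only where $|v|_{C_sL^2}\le 2R$'' is false: $\Theta_R(u)^2-\Theta_R(v)^2$ can be nonzero precisely when $|u|_{C_sL^2}<2R$ but $|v|_{C_sL^2}>2R$, and then $|F_1(v)v|_{L^2_x}$ is \emph{not} controlled by any $C(R)$. Likewise, in the first summand you only have $|v(s)|_{L^2_x}\le 2R+|u-v|_{C_sL^2}$, so the locally Lipschitz bound $|F_1(u)u-F_1(v)v|\le C(|u|,|v|)\,|u-v|$ produces a ``constant'' that depends polynomially on $|u-v|$. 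This spoils the very inequality $\E|\mathcal{T}u-\mathcal{T}v|_{C_tL^2}^2\le C\int_0^t h(s)\,\d s$ that your iteration relies on.

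The paper resolves this with a pathwise stopping-time splitting: set $t_R^u:=\inf\{s:|u|_{C_sL^2}\ge 2R\}$, similarly $t_R^v$, and (without loss of generality on each $\omega$) assume $t_R^u\le t_R^v$. On $[0,t_R^u\wedge t]$ both trajectories satisfy $|\cdot|_{C_sL^2}\le 2R$, so the local Lipschitz estimates of Lemma~\ref{lem:estimateF} apply directly; on $[t_R^u\wedge t,\,t_R^v\wedge t]$ one has $\Theta_R(u\restrict{[0,s]})=0$, so the difference reduces to $-\Theta_R(v)^2F_1(v)v=(\Theta_R(u)^2-\Theta_R(v)^2)F_1(v)v$ with $|v|_{C_sL^2}\le 2R$ there, and the $\Theta_R$-Lipschitz bound together with \eqref{eq:estimatef11} finishes; beyond $t_R^v$ both truncations vanish. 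This yields the genuine global bound $C(R,\|L\|)\,|u-v|_{C_sL^2}$ that you assert but do not establish. With it in place, either your iteration argument or the paper's short-time contraction completes the proof.
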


The proof of this proposition relies on a fixed point argument, in a similar way as in \cite{debouard2003snls}. Before detailing the proof, notice the following inequality, which is a direct consequence of Cauchy-Schwarz inequality, for $X, Y \in L^2(\R^d)$
\begin{equation}
    \label{eq:basicestim}
    |(X, L Y)_{L^2_x}| \leq \| L \| |X|_{L^2_x} |Y|_{L^2_x}.
\end{equation}
From this basic estimate and the definition \eqref{eq:defaverage}, we can state the following lemma.

\begin{lemma}\label{lem:estimateavg}
Fix $R>0$. Let $X, Y \in L^2(\R^d)$, such that $|X|_{L^2_x} \vee |Y|_{L^2_x} \leq 2 R$, $L$ be a bounded operator on $L^2(\R^d)$ and $\Theta_R$ be defined as before. Then:
\begin{equation*}
    |\langle L \rangle_{X}| \leq 4 R^2 \| L \|,
\end{equation*}
\begin{equation*}
    |\langle L \rangle_{X} - \langle L \rangle_{Y}| \leq 4 R \| L \| |X-Y|_{L^2_x}.
\end{equation*}
Moreover the following bound on the truncation function holds, for $T>0$ and $X,Y \in C_TL^2$:
\begin{equation*}
    |\Theta_R(X) - \Theta_R(Y)| \leq \Frac{|\theta'|_{L^{\infty}}}{R}|X-Y|_{C_TL^2}.
\end{equation*}

\end{lemma}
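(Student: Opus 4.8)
The plan is to establish the three bounds in Lemma~\ref{lem:estimateavg} by direct computation, using only the Cauchy--Schwarz estimate \eqref{eq:basicestim}, bilinearity of the scalar product, and elementary properties of the cutoff $\theta$.

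First I would prove the pointwise bound on $\langle L\rangle_X$. By definition \eqref{eq:defaverage}, $\langle L\rangle_X = (X, LX)_{L^2_x}$, so \eqref{eq:basicestim} applied with $Y = X$ gives $|\langle L\rangle_X| \leq \|L\|\,|X|_{L^2_x}^2 \leq \|L\|(2R)^2 = 4R^2\|L\|$, using the hypothesis $|X|_{L^2_x} \leq 2R$. Second, for the Lipschitz estimate on $\langle L\rangle_\cdot$, I would write the telescoping identity
\begin{equation*}
    \langle L\rangle_X - \langle L\rangle_Y = (X, LX)_{L^2_x} - (Y, LY)_{L^2_x} = (X - Y, LX)_{L^2_x} + (Y, L(X-Y))_{L^2_x},
\end{equation*}
and bound each term by \eqref{eq:basicestim}: the first by $\|L\|\,|X-Y|_{L^2_x}|X|_{L^2_x} \leq 2R\|L\|\,|X-Y|_{L^2_x}$ and the second by $\|L\|\,|Y|_{L^2_x}|X-Y|_{L^2_x} \leq 2R\|L\|\,|X-Y|_{L^2_x}$, which sum to the claimed $4R\|L\|\,|X-Y|_{L^2_x}$. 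Here I should be slightly careful that $(\cdot,\cdot)_{L^2_x}$ denotes the real part of the Hermitian product, but since $\Re$ is $\R$-bilinear the splitting above is still valid, and Cauchy--Schwarz for $|\Re(u,\bar v)|$ still yields \eqref{eq:basicestim}; alternatively one notes $L$ need not be self-adjoint, so it is cleaner to keep $LX$, $L(X-Y)$ on the right and never move $L$ across the pairing.

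Third, for the bound on the truncation function, recall $\Theta_R(X) = \theta(|X|_{C_TL^2}/R)$. Since $\theta$ is $C^\infty$ with bounded derivative, it is globally Lipschitz with constant $|\theta'|_{L^\infty}$, so
\begin{equation*}
    |\Theta_R(X) - \Theta_R(Y)| \leq |\theta'|_{L^\infty}\,\Big|\tfrac{|X|_{C_TL^2}}{R} - \tfrac{|Y|_{C_TL^2}}{R}\Big| = \frac{|\theta'|_{L^\infty}}{R}\,\big||X|_{C_TL^2} - |Y|_{C_TL^2}\big| \leq \frac{|\theta'|_{L^\infty}}{R}\,|X-Y|_{C_TL^2},
\end{equation*}
where the last inequality is the reverse triangle inequality for the norm $|\cdot|_{C_TL^2}$.

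None of the three steps presents a genuine obstacle; this is a routine estimate. If anything, the only point requiring a moment's attention is the interplay between the real-linear structure of $(\cdot,\cdot)_{L^2_x}$ and the (possibly non-self-adjoint) operator $L$, which I would handle simply by never transposing $L$ across the pairing in the telescoping identity above. The hypotheses $|X|_{L^2_x}\vee|Y|_{L^2_x}\leq 2R$ and the normalization $\theta$ bounded by $1$ are used exactly as stated.
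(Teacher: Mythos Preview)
Your proof is correct and matches the paper's intent: the paper does not even write out a proof for this lemma, presenting it instead as an immediate consequence of the basic estimate \eqref{eq:basicestim} and the definition \eqref{eq:defaverage}. Your three-step argument is exactly the routine verification the paper leaves implicit.
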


To help during the computations of the fixed point procedure, the following Lemma states key estimates on the two functions $F_1$ and $F_2$. The proof is given in Appendix \ref{sec:appendix-estF}.
\begin{lemma}
\label{lem:estimateF}
Fix $R>0$. Let $X, Y \in L^2(\R^d)$, such that $|X|_{L^2_x} \vee |Y|_{L^2_x} \leq 2 R$ and $L$ a bounded operator. Then the following estimates hold:
\begin{enumerate}
    \item There exists $C_1(R, \| L \|) >0$ such that
    \begin{equation}
    \label{eq:estimatef11}
        |F_1(X)X|_{L^2_x} \leq C_1(R, \| L \|).
    \end{equation}
    \item There exists $C_2(R, \| L \|) > 0$ such that
    \begin{equation}
    \label{eq:estimatef12}
        |F_1(X)X - F_1(Y)Y|_{L^2_x} \leq C_2(R, \| L \|)|X-Y|_{L^2_x}.
    \end{equation}
    \item There exists $C_3(R, \| L \|)>0$ such that
    \begin{equation}
    \label{eq:estimatef21}
        |F_2(X)X|_{L^2_x}\leq C_3(R, \| L \|)
    \end{equation}
    \item There exists $C_4(R, \| L \|)>0$ such that
    \begin{equation}
    \label{eq:estimatef22}
        |F_2(X)X - F_2(Y)Y|_{L^2_x}\leq C_4(R, \| L \|)|X-Y|_{L^2_x}.
    \end{equation}
\end{enumerate}
\end{lemma}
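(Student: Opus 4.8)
\textbf{Proof plan for Lemma \ref{lem:estimateF}.}

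The plan is to expand $F_1(X)X$ and $F_2(X)X$ using the definitions \eqref{eq:defF1} and \eqref{eq:defF2}, and then bound each term using the basic estimate \eqref{eq:basicestim} together with the two scalar estimates on $\langle L\rangle$ from Lemma \ref{lem:estimateavg}. Since $|X|_{L^2_x}\vee|Y|_{L^2_x}\le 2R$, every appearance of $|X|_{L^2_x}$ or $|Y|_{L^2_x}$ may be replaced by $2R$, so all constants will be polynomials in $R$ and $\|L\|$. I will treat $F_2$ first since it is the simpler case and $F_1$ reuses the same ideas.

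For the bound \eqref{eq:estimatef21}: writing $F_2(X)X = LX - \langle L\rangle_X X$, the first term is bounded by $\|L\|\,|X|_{L^2_x}\le 2R\|L\|$ and the second by $|\langle L\rangle_X|\,|X|_{L^2_x}\le 8R^3\|L\|$ using the first estimate of Lemma \ref{lem:estimateavg}; so $C_3(R,\|L\|) = 2R\|L\| + 8R^3\|L\|$ works. For the Lipschitz bound \eqref{eq:estimatef22}, I decompose
\begin{equation*}
    F_2(X)X - F_2(Y)Y = L(X-Y) - \langle L\rangle_X(X-Y) - \big(\langle L\rangle_X - \langle L\rangle_Y\big)Y,
\end{equation*}
and bound the three pieces by $\|L\|\,|X-Y|_{L^2_x}$, by $|\langle L\rangle_X|\,|X-Y|_{L^2_x}\le 4R^2\|L\|\,|X-Y|_{L^2_x}$, and by $|\langle L\rangle_X-\langle L\rangle_Y|\,|Y|_{L^2_x}\le 8R^2\|L\|\,|X-Y|_{L^2_x}$ respectively, again via Lemma \ref{lem:estimateavg}. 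Summing gives $C_4(R,\|L\|)$.

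For $F_1$ the same strategy applies but with one more term. Expanding, $F_1(X)X = L^*LX - 2\langle L\rangle_X LX + \langle L\rangle_X^2 X$; the three summands are bounded respectively by $\|L\|^2|X|_{L^2_x}$, by $2|\langle L\rangle_X|\,\|L\|\,|X|_{L^2_x}$, and by $|\langle L\rangle_X|^2|X|_{L^2_x}$, and applying Lemma \ref{lem:estimateavg} and $|X|_{L^2_x}\le 2R$ yields \eqref{eq:estimatef11}. For \eqref{eq:estimatef12} I form the difference $F_1(X)X-F_1(Y)Y$ and insert and subtract intermediate terms so that each resulting difference involves either $L(X-Y)$, a factor $\langle L\rangle_X-\langle L\rangle_Y$ (controlled by the second estimate of Lemma \ref{lem:estimateavg}), or the difference $\langle L\rangle_X^2-\langle L\rangle_Y^2 = (\langle L\rangle_X+\langle L\rangle_Y)(\langle L\rangle_X-\langle L\rangle_Y)$; the prefactors are all uniformly bounded on the ball of radius $2R$, so the sum is $\le C_2(R,\|L\|)|X-Y|_{L^2_x}$. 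None of the steps presents a genuine obstacle: the only thing to be careful about is the systematic telescoping in the difference estimates so that every term carries exactly one factor of $|X-Y|_{L^2_x}$, with all remaining factors bounded by constants depending only on $R$ and $\|L\|$ — this bookkeeping is the part most prone to error, but it is routine.
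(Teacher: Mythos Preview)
Your proposal is correct and follows essentially the same approach as the paper's proof: expand $F_1$ and $F_2$, bound each piece using Lemma~\ref{lem:estimateavg} and $|X|_{L^2_x}\vee|Y|_{L^2_x}\le 2R$, and for the Lipschitz estimates telescope so that each summand carries a single factor $|X-Y|_{L^2_x}$. The only cosmetic differences are the order of treatment (the paper does $F_1$ first) and which variable carries the $\langle L\rangle$ factor in the telescoping (e.g.\ the paper writes $|\langle L\rangle_X-\langle L\rangle_Y|\,|X|_{L^2_x}+|\langle L\rangle_Y|\,|X-Y|_{L^2_x}$ for \eqref{eq:estimatef22}), which is immaterial.
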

\begin{proof}[Proof of Proposition \ref{prop:eqtrunc}]
First, define the following application, for $u \in C_{T_0}L^2$:

\begin{equation}
    \begin{aligned}
    \Tc_R u(t) &= S(t) u_0 - \i \int_0^t S(t-s) [(V \star \xi_s) u(s)]\d s  \\
    &- \frac{1}{2} \int_0^t  \Theta_R(u\restrict{[0,s]})^2 S(t-s) [F_1(u(s))u(s)]\d s \\
&+ \int_0^t\Theta_R(u\restrict{[0,s]}) S(t-s) [F_2(u(s))u(s)] \d \beta_s,
    \end{aligned}
\end{equation}
where $(S(t))_{t \in \R}$ is the group with infinitesimal generator $-\i H$ given by Stone's theorem, since $H$ is self-adjoint on $L^2(\R^d)$. Using \eqref{eq:estimatef11}, \eqref{eq:estimatef21} and the continuity in time of the group $S$, it is standard that if $u$ has almost surely continuous trajectories in $L^2(\R^d)$ then so does $\Tc_R$, i.e. $\Tc_R$ maps $C([0,T], L^2(\R^d))$ to itself.

We show that $\Tc_R$ is a contraction on $L^2(\Omega, C_T L^2)$ for $T \leq T_0$ small enough. 
Let $u, v \in L^2(\Omega, C_{T} L^2)$, we define the following stopping times $t_R^u$ and $t_R^v$ by:
\begin{equation}
    t_R^u := \inf \{t \leq T, |u|_{C_t L^2} \geq 2R \},
\end{equation}
and similarly $t_R^v$ with $v$ instead of $u$.
Fix $t \leq T_0$ and $\omega \in \Omega$ such that $u(\omega)$ and $v(\omega)$ are continuous in time, then without loss of generality, assume that $t_R^u \leq t_R^v$. Since $S$ is a unitary group,
\begin{equation*}
    \begin{aligned}
    &|\Tc_R u(t) - \Tc_R v(t)|_{L^2_x} \leq \int_0^t |(V \star \xi_s) (u(s)- v(s))|_{L^2_x} \d s\\
&+ \frac{1}{2}\int_0^{t^u_R\wedge t} |\Theta_R(u\restrict{[0,s]})^2||F_1(u(s))u(s) - F_1(v(s))v(s)|_{L^2_x} \d s \\
&+ \frac{1}{2}\int_0^{t^u_R\wedge t} |\Theta_R(u\restrict{[0,s]})^2 - \Theta_R(v\restrict{[0,s]})^2| |F_1(v(s))v(s)|_{L^2_x} \d s \\
&+ \frac{1}{2}\int_{t^u_R \wedge t}^{t^v_R \wedge t} |\Theta_R(v\restrict{[0,s]})^2F_1(v(s))v(s)|_{L^2_x} \d s \\
 &+ |\int_0^t \Theta_R(u\restrict{[0,s]}) S(t-s)F_2(u(s))u(s) - \Theta_R(v\restrict{[0,s]}) S(t-s)F_2(v(s))v(s) \d \beta_s |_{L^2_x} \\
 &= I + \frac{1}{2}II + \frac{1}{2}III + \frac{1}{2}IV + V.
    \end{aligned}
\end{equation*}

We estimate $I$ in the following way:
\begin{eqnarray*}
I &\leq& \int_0^T |V \star \xi_s|_{L^{\infty}_x}|(u(s)- v(s))|_{L^2_x} \d s \\
&\leq & |V|_{L^{\infty}_x}  \int_0^T|\xi_s|_{L^1_x}|(u(s)- v(s))|_{L^2_x} \d s,
\end{eqnarray*}
where we used a convolution inequality. Then, by the definition of the norm of $C_T L^2$:
\begin{equation}
    \label{eq:proof1est1}
    I \leq T |V|_{L^{\infty}_x} |\xi|_{L^{\infty}(0,T; L^1_x)} |u-v|_{C_T L^2}.
\end{equation}
Since $t\in [0, t_R^u]$, we can apply \eqref{eq:estimatef12} to $II$ and obtain
\begin{equation}
\label{eq:proof1est2}
    II \leq T C_2(R) |u-v|_{C_T L^2}.
\end{equation}
To estimate $III$, first notice the following bound
\begin{equation*}
    |\Theta_R(u)^2 - \Theta_R(v)^2| = |\Theta_R(u) - \Theta_R(v)| |\Theta_R(u) + \Theta_R(v)| \leq \Frac{2|\theta'|_{L^{\infty}}}{R}|u-v|_{C_T L^2},
\end{equation*}
where we used Lemma \ref{lem:estimateavg}. Hence, since $t \in [0, t^u_R]$, by \eqref{eq:estimatef11} we have
\begin{equation}
\label{eq:proof1est3}
    III \leq 4 T C_1(R)|\theta'|_{L^{\infty}}|u-v|_{C_T L^2}.
\end{equation}
For $IV$, observe that when $s \in [t^u_R, t^v_R]$, $\Theta_R(u\restrict{[0,s]})$ is null. Thus we have 

\begin{equation}
\label{eq:proof1est4}
    \begin{aligned}
IV &= \int_{t^u_R \wedge t}^{t^v_R \wedge t} |\Theta_R(u\restrict{[0,s]})^2 - \Theta_R(v\restrict{[0,s]})^2||F_1(v(s))v(s)|_{L^2_x} \d s \\
&\leq 4 T C_1(R)|\theta'|_{L^{\infty}} |u-v|_{C_T L^2}.
\end{aligned}
\end{equation}

Finally, to study the martingale part, we recall the following classical result:
\begin{lemma}
\label{lem:itoiso}
Let $\Phi \in L^2(\Omega, L^2(0,T;L^2_x))$ be a predictable process, $S$ be a unitary group and $\beta$ be a Brownian motion, then for all $t \leq T$
\begin{equation*}
    \E \left[|\int_0^t S(t-s)\Phi_s \d \beta_s|^2_{L^2_x} \right] = \E \left[|\int_0^t S(-s)\Phi_s \d \beta_s|^2_{L^2_x} \right]
\end{equation*}
and $(\int_0^t S(-s)\Phi_s \d \beta_s)_{s \leq T}$ is a square integrable martingale. Moreover, by Doob's inequality and Ito's isometry
\begin{equation}
    \label{eq:appliBDG}
    \E \left[\sup_{t\leq T} |\int_0^t S(t-s)\Phi_s \d \beta_s|^2_{L^2_x} \right] \leq 4 \E \left[ \int_0^T |\Phi_s|^2_{L^2_x} \d s\right].
\end{equation}
\end{lemma}

We aim to apply this lemma to the martingale part, i.e with the following predictable process, for $s \in [0, T_0]$:
\begin{equation*}
    \Phi_s = \Theta_R(u\restrict{[0,s]})F_2(u(s))u(s) - \Theta_R(v\restrict{[0,s]})F_2(v(s))v(s).
\end{equation*}
By using the same subdivision of the time interval as the one used for the drift part, and then applying \eqref{eq:estimatef21} and \eqref{eq:estimatef22}, we find that there exists $C_5 >0$ that depends only of $\| L \|$ and $R$ such that
\begin{equation*}
    \int_0^t |\Phi_s|_{L^2_x}^2 \d s \leq C_5 |u-v|^2_{C_T L^2}.
\end{equation*}
Therefore, by applying \eqref{eq:appliBDG} from Lemma \ref{lem:itoiso}:
\begin{equation}
\label{eq:proof1est5}
\begin{aligned}
 &\E \left[ \left|\int_0^{\cdot} S(\cdot - s) [\Theta_R(u\restrict{[0,s]}) F_2(u(s))u(s) - \Theta_R(v\restrict{[0,s]}) F_1(v(s))v(s)]\d \beta_s \right|_{C_T L^2}^2\right] \\
 &\leq 4 C_5 T \E[|u-v|^2_{C_T L^2}].
\end{aligned}
\end{equation}
Thus, collecting \eqref{eq:proof1est1}-\eqref{eq:proof1est4} and \eqref{eq:proof1est5}, there exists an explicit constant $C_6 >0$, which depends only on $R$, $T_0$, $\|L\|$ and $|V|_{L^{\infty}_x}$ such that, for $T \leq T_0$
\begin{equation*}
    \E[|\Tc_R u - \Tc_R v|_{C_T L^2}^2] \leq C_6 T \E[|u-v|^2_{C_T L^2}].
\end{equation*}
This inequality can be written as follows
\begin{equation}
\label{eq:eqtruncfin}
    |\Tc_R u - \Tc_R v|_{L^2(\Omega, C_T L^2)} \leq \sqrt{C_6} T^{1/2}|u-v|_{L^2(\Omega, C_T L^2)}.
\end{equation}
This shows that, by taking $T$ such that
\begin{equation*}
    T \leq \Inf \left(T_0, \Frac{1}{2 C_6(R, T_0, \| L \|, |V|_{L^{\infty}_x})}\right),
\end{equation*}
$\Tc_R$ is a contraction mapping in $L^2(\Omega, C_T L^2)$. Moreover, since $u_0 \in L^2(\Omega, C_T L^2)$, by the same manipulations as before we have that $\Tc_R$ maps $L^2(\Omega, C_T L^2)$ into itself for each $T>0$. Hence, $\Tc_R$ has a pathwise unique fixed point in $C_T L^2$, which is the solution to the truncated equation.

Since the choice of $T$ is independent of $u_0$, the time interval $[0,T_0]$ can be written as
\begin{equation*}
    [0, T_0] = \bigcup_{k=0}^{N-1} [kT, (k+1)T] \cup [NT, T_0],
\end{equation*}
where $N = \lfloor T_0/T \rfloor$. With this writing, the result can easily be extended to the whole interval $[0, T_0]$ by pathwise uniqueness.
\end{proof}

\subsection{Proofs of well-posedness}

In this subsection we are interested in the proof of Theorem \ref{thm:meq}. We start by removing the truncation and proving Proposition \ref{prop:meq-interm}. Then, we are finally equipped to demonstrate Theorem \ref{thm:meq}.

\subsubsection{Proof of Proposition \ref{prop:meq-interm}}

We want to show that for $R$ large enough, the solution of the truncated equation is in fact the solution of equation \eqref{eq:mfeq}.
Let $u_0$ be a $\Fc_0$-measurable random variable with $|u_0|_{L^2_x} = 1$ $ \P$-almost surely and $R \geq 1$, then denote by $u^R$ the unique solution of \eqref{eq:trunc} given by Proposition \ref{prop:eqtrunc}. We first show the conservation of the $L^2$-norm of the solution.

\begin{lemma}\label{lem:estimnormL2}
For all $t \leq T_0$, if we assume $|u_0|_{L^2_x} = 1$ almost-surely, then almost-surely, it holds that
\begin{equation*}
    |u^R(t)|_{L^2_x} = 1 \text{, for all } t \in [0, T_0].
\end{equation*}
\end{lemma}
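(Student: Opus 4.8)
The plan is to apply Itô's formula to the real-valued process $t \mapsto |u^R(t)|_{L^2_x}^2 = (u^R(t), u^R(t))_{L^2_x}$ and check that all contributions cancel, so that the square norm stays equal to its initial value $1$. Write the dynamics of $u^R$ in the mild/strong form with drift $b(t) = -\i(H + V\star\xi_t)u^R(t) - \tfrac12\Theta_R^2 F_1(u^R(t))u^R(t)$ and diffusion $\sigma(t) = \Theta_R F_2(u^R(t))u^R(t)$, where I abbreviate $\Theta_R = \Theta_R(u^R\restrict{[0,t]})$. Since $(u^R(t),u^R(t))_{L^2_x}$ is a quadratic functional, Itô's formula gives
\begin{equation*}
    \d |u^R(t)|_{L^2_x}^2 = 2\,(u^R(t), b(t))_{L^2_x}\,\d t + 2\,(u^R(t), \sigma(t))_{L^2_x}\,\d\beta_t + |\sigma(t)|_{L^2_x}^2\,\d t,
\end{equation*}
the last term being the Itô correction coming from the quadratic variation of the single driving Brownian motion $\beta$. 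The key algebraic facts I will use are: $H$ is self-adjoint and $V\star\xi_t$ is real-valued, so $(u^R(t), -\i(H+V\star\xi_t)u^R(t))_{L^2_x} = 0$ because $(\psi, -\i A\psi)_{L^2_x} = \Im(\psi, A\psi)_{L^2_x} = 0$ for $A$ self-adjoint (here one must be a little careful: the rigorous justification uses the unitary group $S(t)$ rather than the unbounded generator directly, exactly as in the mild formulation used in the proof of Proposition \ref{prop:eqtrunc}). For the remaining terms, set $\ell = \langle L\rangle_{u^R(t)} = (u^R(t), Lu^R(t))_{L^2_x}$, which is real by definition \eqref{eq:defaverage}. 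A direct computation gives
\begin{equation*}
    (u^R(t), F_2(u^R(t))u^R(t))_{L^2_x} = (u^R(t), Lu^R(t))_{L^2_x} - \ell\,|u^R(t)|_{L^2_x}^2 = \ell(1 - |u^R(t)|_{L^2_x}^2),
\end{equation*}
and
\begin{equation*}
    |F_2(u^R(t))u^R(t)|_{L^2_x}^2 = |Lu^R(t)|_{L^2_x}^2 - 2\ell\,(u^R(t),Lu^R(t))_{L^2_x} + \ell^2|u^R(t)|_{L^2_x}^2 = |Lu^R(t)|_{L^2_x}^2 - \ell^2(2 - |u^R(t)|_{L^2_x}^2),
\end{equation*}
while $(u^R(t), F_1(u^R(t))u^R(t))_{L^2_x} = |Lu^R(t)|_{L^2_x}^2 - 2\ell\,(u^R(t),Lu^R(t))_{L^2_x} + \ell^2|u^R(t)|_{L^2_x}^2$, which is exactly the same expression as $|F_2(u^R(t))u^R(t)|_{L^2_x}^2$. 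Therefore, writing $g(t) := |u^R(t)|_{L^2_x}^2 - 1$, the drift terms combine to $-\Theta_R^2\,(u^R(t), F_1 u^R(t))_{L^2_x} + \Theta_R^2 |F_2 u^R(t)|_{L^2_x}^2 = 0$, and collecting everything one obtains a closed linear SDE for $g$:
\begin{equation*}
    \d g(t) = -\Theta_R^2\,\ell\,g(t)\,\d t \cdot(\text{from the }F_1\text{ drift, after cancellation only this survives}) \;+\; 2\Theta_R\,\ell\,g(t)\,\d\beta_t,
\end{equation*}
hmm — let me restate this cleanly: the surviving terms are exactly $\d g(t) = -\Theta_R^2\ell\, g(t)\,\d t + 2\Theta_R\ell\, g(t)\,\d\beta_t$ with $g(0) = 0$. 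Since $g \equiv 0$ solves this linear SDE with zero initial data, and the coefficients are progressively measurable and (thanks to the truncation $\Theta_R$ together with Lemma \ref{lem:estimateavg}, which bounds $|\ell| \leq 4R^2\|L\|$ on the support of $\Theta_R$) bounded, pathwise uniqueness for linear SDEs forces $g \equiv 0$ almost surely; by continuity of paths this holds simultaneously for all $t \in [0,T_0]$.

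The main obstacle is the rigorous application of Itô's formula in the presence of the unbounded operator $H$: one cannot naively pair $u^R(t)$ with $Hu^R(t)$ since $u^R(t)$ need not lie in $D(H)$. The standard remedy, which I would carry out, is to work with the mild formulation (as in the proof of Proposition \ref{prop:eqtrunc}), exploit that $S(t)$ is a unitary group on $L^2(\R^d)$ so that $|S(t-s)\cdot|_{L^2_x} = |\cdot|_{L^2_x}$, and either (i) introduce the Yosida approximations $H_n = nH(n+H)^{-1}$ (or the resolvent-regularised equation), apply Itô's formula for the bounded generator $H_n$ where everything is classical, derive $g_n \equiv 0$, and pass to the limit $n\to\infty$ using the strong continuity $S_n(t)\to S(t)$ and the truncated (hence uniformly bounded) nonlinearities; or (ii) invoke a version of Itô's formula for mild solutions adapted to $L^2$-norms of stochastic convolutions with unitary groups (in the spirit of Lemma \ref{lem:itoiso}), which directly yields the cancellation above without the unbounded term ever appearing. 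Either route is routine given the estimates of Lemmas \ref{lem:estimateavg} and \ref{lem:estimateF}; the only genuinely new input is the algebraic identity $(u,F_1(u)u)_{L^2_x} = |F_2(u)u|_{L^2_x}^2$ established above, which is precisely the structural reason the Belavkin-type nonlinearity preserves the norm.
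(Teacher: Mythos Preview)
Your approach is essentially identical to the paper's: apply It\^o's formula to $|u^R(t)|_{L^2_x}^2$, use self-adjointness to kill the Hamiltonian contribution, and exploit the algebraic identity $(u,F_1(u)u)_{L^2_x} = |F_2(u)u|_{L^2_x}^2$ to cancel the drift. The paper then packages the remaining pure-martingale equation $\d|u^R|^2 = \Theta_R\langle L\rangle_{u^R}\bigl(1-|u^R|^2\bigr)\d\beta_t$ into a separate Lemma~\ref{lem:SDE}, whose proof is exactly the localisation-plus-Gronwall argument behind the ``pathwise uniqueness for linear SDEs'' you invoke.

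There is, however, a genuine muddle in your computation. You correctly state that the drift terms combine to $-\Theta_R^2(u^R,F_1(u^R)u^R)_{L^2_x} + \Theta_R^2|F_2(u^R)u^R|_{L^2_x}^2 = 0$, and then immediately contradict yourself by writing $\d g(t) = -\Theta_R^2\ell\, g(t)\,\d t + 2\Theta_R\ell\, g(t)\,\d\beta_t$ with a nonzero drift. The drift is exactly zero; there is no $-\Theta_R^2\ell\, g$ term ``surviving''. The correct equation is $\d g(t) = -2\Theta_R\ell\, g(t)\,\d\beta_t$ (note also the sign: since $g = |u^R|^2 - 1$ and $(u^R,F_2(u^R)u^R)_{L^2_x} = \ell(1-|u^R|^2) = -\ell g$, the coefficient is $-2\Theta_R\ell$, not $+2\Theta_R\ell$). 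Fortunately these slips are harmless for the conclusion---a linear SDE with zero initial data has the zero solution regardless---but the write-up should be cleaned up to reflect that the drift vanishes outright.
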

\begin{proof}

Note that for a self-adjoint operator $A : D(A) \subset L^2_x \mapsto L^2_x$, we have for all $x \in D(A)$
\begin{equation*}
    (x,\i A x)_{L^2_x} = 0.
\end{equation*}
Then, since $u^R$ is a solution of \eqref{eq:trunc}, by Ito's formula
\begin{eqnarray*}
\d |u^R(t)|^2_{L^2_x} &=& 2(u^R(t), -\i (H + V \star \xi_t) u^R(t) )_{L^2_x} \d t \\
&-&  (u^R(t), \Theta_R(u^R\restrict{[0,t]})^2 F_1(u^R(t)) u^R(t) )_{L^2_x} \d t \\
&+& |\Theta_R(u^R\restrict{[0,t]}) F_2(u^R(t))u^R(t)|^2_{L^2_x}\d t \\
&+&(u^R(t), \Theta_R(u^R\restrict{[0,t]}) F_2(u^R(t))u^R(t))_{L^2_x}\d \beta_t,
\end{eqnarray*}
where $F_1$ is defined by \eqref{eq:defF1} and $F_2$ by \eqref{eq:defF2}. The application of Ito's formula may be justified by an approximation argument as in \cite{debouard2003snls}. Note that for $X \in L^2(\R^d)$, by a simple computation, the following holds
\begin{equation*}
    (X, F_1(X)X)_{L^2_x} = |L X|^2_{L^2_x} - 2 \langle L \rangle_X + \langle L \rangle_X^2|X|^2_{L^2_x},
\end{equation*}
and
\begin{equation*}
    |F_2(X)X|^2_{L^2_x} = |L X|^2_{L^2_x} - 2 \langle L \rangle_X + \langle L \rangle_X^2|X|^2_{L^2_x}.
\end{equation*}
By cancelling these two terms and expanding the last one, we find
\begin{equation}
    \label{eq:sdenormsimpl}
    \d |u^R(t)|^2_{L^2_x} = \Theta_R(u^R\restrict{[0,t]}) \langle L \rangle_{u^R(t)} \left(1-|u^R(t)|^2_{L^2_x}\right) \d \beta_t.
\end{equation}
Define the following process for $t \in [0,T_0]$:
\begin{equation*}
    C_t :=\Theta_R(u^R\restrict{[0,t]}) \langle L \rangle_{u^R(t)}.
\end{equation*}
A direct computation shows that, by \eqref{eq:basicestim} and the definition of $\Theta$,
\begin{equation*}
    |C_t| \leq \| L \| |u^R(t)|^2_{L^2_x}.
\end{equation*}
Therefore, the lemma follows directly from \eqref{eq:sdenormsimpl} and an application of Lemma \ref{lem:SDE}, with $M_t = |u^R(t)|^2_{L^2_x}$.
\end{proof}
Lemma \ref{lem:estimnormL2} shows that, for $R \geq 1$, $\P$-almost surely, for all $t \in [0,T_0]$
\begin{equation*}
    \Theta_R(u^R\restrict{[0,t]}) = 1,
\end{equation*}
and thus $u^R$ is in fact a solution of \eqref{eq:interm}. Similar arguments as in the proof of Lemma \ref{lem:estimnormL2} can be used to show that if $u\in L^2(\Omega, C([0,T_0];L^2_x))$ is a solution of \eqref{eq:interm}, then almost surely, for all $t \in [0,T_0]$
\begin{equation*}
    |u(t)|_{L^2_x} = 1.
\end{equation*}
This implies that $u$ is also a solution of the truncated equation for $R=1$. Therefore, by pathwise uniqueness for the truncated equation, we directly find that $u$ is pathwise unique. This concludes the proof of Proposition \ref{prop:meq-interm}.
\hfill
$\square$

\subsubsection{Proof of Theorem \ref{thm:meq}}

We are now ready to finish the proof of Theorem \ref{thm:meq}. For that, we will use once again a fixed point argument. For $T>0$ and a deterministic function $\xi \in C([0,T], L^1(\R^d, \R))$, we denote $u(\xi)$ the unique strong solution in $L^2(\Omega, C([0,T]; L^2(\R^d)))$ of \eqref{eq:interm} given by Proposition \ref{prop:meq-interm}.

We define the following map, for $\xi \in C([0,T], L^1(\R^d, \R))$ and $t \in [0, T_0]$:
\begin{equation}
\hat{\Tc}(\xi)(t)(\cdot) := \E[|u(\xi)(t, \cdot)|^2].
\end{equation}
Before starting the proof, we state a useful bound. For $f, g \in L^2(\Omega, L^2(\R^d))$ with almost surely unit $L^2_x$-norm, by Cauchy-Schwarz inequality:
\begin{equation}
\label{eq:triang}
    \left|\E |f(\cdot)|^2 - \E |g(\cdot)|^2\right|_{L^1_x} \leq 2 \left|\E |f-g|^2_{L^2_x} \right|^{1/2}.
\end{equation}
Note that the following inclusion holds:
\begin{equation*}
    L^2(\Omega ; C([0,T], L^2(\R^d))) \subset C([0,T], L^2(\Omega ; L^2(\R^d))).
\end{equation*}
From this inclusion and \eqref{eq:triang}, it follows directly that $\hat{\Tc}$ maps $ C([0,T], L^1(\R^d, \R))$ to itself. 

In the following, we show that $\hat{\Tc}$ is a contraction for $T$ small enough.
Let $\xi, \zeta \in C([0,T], L^1(\R^d, \R))$, then by Ito's formula and \eqref{eq:trunc}:
\begin{equation*}
    \begin{aligned}
     &\d |u(\xi)(t) - u(\zeta)(t)|^2_{L^2_x} = 2 (u(\xi)(t) - u(\zeta)(t), - \i V \star (\xi_t - \zeta_t) u(\xi)(t))_{L^2_x}\d t \\
&+ 2 (u(\xi)(t) - u(\zeta)(t), -\i [H + V \star \zeta_t](u(\xi)(t) - u(\zeta)(t)))_{L^2_x} \d t \\
&+ (u(\xi)(t) - u(\zeta)(t), F_1(u(\xi)(t))u(\xi)(t) - F_1(u(\zeta)(t))u(\zeta)(t))_{L^2_x} \d t \\
&+ 2 (u(\xi)(t) - u(\zeta)(t), F_2(u(\xi)(t))u(\xi)(t) - F_2(u(\zeta)(t))u(\zeta)(t))_{L^2_x} \d \beta_t\\
&+|F_2(u(\xi)(t))u(\xi)(t) - F_2(u(\zeta)(t))u(\zeta)(t)|_{L^2_x}^2\d t.
    \end{aligned}
\end{equation*}

Notice that, except for the first term on the right hand side, the required bounds are similar to those in the proof of Proposition \ref{prop:eqtrunc} with $R=1$. Indeed, a similar bound can be derived using estimates \eqref{eq:estimatef12} and \eqref{eq:estimatef22} from Lemma \ref{lem:estimateF} and the preservation of the $L^2_x$-norm. Thus, there exists an explicit constant $C_1 > 0$, depending only of $|V|_{L^{\infty}_x}, \| L \|$ and $T_0$ such that
\begin{equation*}
    \begin{aligned}
     \E [|u(\xi)(t) - u(\zeta)(t)|^2_{L^2_x}] &\leq \E \int_0^t (u(\xi)(s) - u(\zeta)(s), - \i V \star (\xi_s - \zeta_s) u(\xi)(s))_{L^2_x}\d s \\
     &+ C_1 \int_0^t \E [|u(\xi)(s) - u(\zeta)(s)|^2_{L^2_x}] \d s.
    \end{aligned}
\end{equation*}
For the first term on the right hand side, notice that
\begin{equation*}
    \begin{aligned}
     &|(u(\xi)(s) - u(\zeta)(s), - \i V \star (\xi_s - \zeta_s) u(\xi)(s))_{L^2_x}|\\
     &\leq |V \star (\xi_s - \zeta_s)|_{L^{\infty}_x} |u(\xi)(s) - u(\zeta)(s)|_{L^2_x} |u(\xi)(s)|_{L^2_x} \\
&\leq \Frac{1}{2} |V \star (\xi_s - \zeta_s)|_{L^{\infty}_x}^2 + \Frac{1}{2}|u(\xi)(s) - u(\zeta)(s)|_{L^2_x}^2\\
&\leq \Frac{1}{2}|V|_{L^{\infty}_x}^2|\xi_s - \zeta_s|_{L^1_x}^2 + \Frac{1}{2}|u(\xi)(s) - u(\zeta)(s)|_{L^2_x}^2,
    \end{aligned}
\end{equation*}
by the almost sure preservation of the $L^2_x$-norm for all time. Thus, collecting the results gives, for all $t \leq T$
\begin{equation}
    \label{eq:gronwalltronc}
\begin{aligned}
 \E [|u(\xi)(t) - u(\zeta)(t)|^2_{L^2_x}] &\leq \Frac{1}{2}|V|_{L^{\infty}_x}^2 T |\xi - \zeta|_{L^{\infty}(0, T; L^1_x)}^2 \\
 &+ (C_1 + \Frac{1}{2}) \int_0^t \E [|u(\xi)(s) - u(\zeta)(s)|^2_{L^2_x}] \d s.
\end{aligned}
\end{equation}
By Gronwall's lemma,  \eqref{eq:triang} and taking the supremum over $[0,T]$ we obtain
\begin{equation*}
    |\hat{\Tc}\xi - \hat{\Tc}\zeta|_{L^{\infty}(0,T;L^1_x)} \leq \sqrt{2}|V|_{L^{\infty}_x} e^{(C_1 + 1/2) T_0/2} \sqrt{T} |\xi - \zeta|_{L^{\infty}(0, T; L^1_x)}.
\end{equation*}
Therefore, by taking $T\in (0, T_0]$ such that
\begin{equation*}
    T \leq \inf \left(T_0, \frac{e^{-(C_1 + 1/2) T_0}}{2|V|_{L^{\infty}_x}^2} \right),
\end{equation*}
the application $\hat{\Tc}$ maps $C([0,T], L^1(\R^d; \R))$ into itself and is a contraction. Hence, $\Tc$ has a unique fixed point in $C([0,T], L^1(\R^d; \R))$, which is the solution to \eqref{eq:mfeq}. Moreover, the choice of $T$ is independent of $u_0$ since $C_1$ is itself independent of $u_0$. Then, a similar decomposition of $[0,T_0]$ as in the proof of Proposition \ref{prop:eqtrunc} can be done. The result can therefore easily be extended to the time interval $[0,T_0]$, which concludes the proof of Theorem \ref{thm:meq}.
\hfill
$\square$

\section{Propagation of the regularity}\label{sec:propreg}

In order to prove the convergence of the $N$-body system toward the mean-field equation, a bound on the $H^1_x$-norm of the solutions of the mean-field equation \eqref{eq:mfeq} is needed. Thus, we begin by showing that, under more restrictive assumptions, Theorem \ref{thm:meqh1} holds i.e. we have $H^1$ solutions. Then we will show a moment estimate for these solutions.

\subsection{Proof of Theorem \ref{thm:meqh1}}

To prove Theorem \ref{thm:meqh1}, we follow the same steps as in Section \ref{sec:mfeq}. We start by proving the result for the intermediate equation \eqref{eq:interm}, then for the mean-field one. More precisely, we prove the following proposition:

\begin{proposition}\label{prop:eqtrunch1}

Let $T_0 >0$. Assume that $H = - \Delta$, the potential $V$ is in $L^{\infty}(\R^3)$ and even and that $u_0 \in L^2(\Omega, H^1(\R^3))$ is a $\Fc_0$-measurable random variable such that $|u_0|_{L^2_x} = 1$ almost surely. In addition, assume that the coupling operator $L\in \Lc(L^2_x)$ satisfies the following conditions $[\nabla, L ] \in \Lc( H^1_x, L^2_x)$ and $[\nabla, L^* L ] \in \Lc( H^1_x, L^2_x)$. Finally, assume that $\xi$ is in $ C([0,T_0],W^{1,1}(\R^d; \R))$ such that $|\xi_t|_{L^1_x}=1$ for all $t \leq T_0$ . Then the unique solution $u$ of \eqref{eq:trunc} such that $u(0) = u_0$ given by Proposition \ref{prop:meq-interm} lies in $L^2(\Omega, C([0,T_0]; H^1(\R^3)))$. 
\end{proposition}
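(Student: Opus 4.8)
The plan is to reproduce, at the level of $H^1(\R^3)$, the scheme of Section~\ref{sec:mfeq}. I would first record three consequences of the hypotheses. Since $|u_0|_{L^2_x}=1$ almost surely, Lemma~\ref{lem:estimnormL2} gives $|u(t)|_{L^2_x}=1$ for all $t\in[0,T_0]$, so that $\Theta_R\equiv 1$ along the trajectory and $u$ is in fact the solution of \eqref{eq:interm}; in particular $|\langle L\rangle_{u(t)}|\le\|L\|$ and the $\Lc(L^2_x)$-norms of $F_1(u(t))$ and $F_2(u(t))$ are bounded by constants depending only on $\|L\|$. Next, for $H=-\Delta$ the group $S$ given by Stone's theorem is unitary on $H^1(\R^3)$ and commutes with $\nabla$. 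Finally, the assumptions $[\nabla,L],[\nabla,L^*L]\in\Lc(H^1_x,L^2_x)$ imply $L,L^*L\in\Lc(H^1_x)$ (since $|Lf|_{H^1_x}\le C(\|L\|+\|[\nabla,L]\|_{\Lc(H^1_x,L^2_x)})|f|_{H^1_x}$, and likewise for $L^*L$), and $\xi\in C([0,T_0],W^{1,1}(\R^3;\R))$ yields, by integration by parts, $\nabla(V\star\xi_t)=V\star\nabla\xi_t$ with $|V\star\nabla\xi_t|_{L^\infty_x}\le|V|_{L^\infty_x}|\nabla\xi_t|_{L^1_x}$, so that multiplication by $V\star\xi_t$ is a bounded operator on $H^1_x$.

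The first step is a local existence result in $H^1$, obtained by truncation. I would introduce the $H^1$-truncation $\widetilde\Theta_M(u\restrict{[0,t]}):=\theta(|u|_{C_tH^1}/M)$ and the equation deduced from \eqref{eq:interm} by inserting $\widetilde\Theta_M$ in front of $F_1(u)u$ and $F_2(u)u$, exactly as $\Theta_R$ appears in \eqref{eq:trunc}. Using the facts above, the $H^1$-versions of Lemmas~\ref{lem:estimateavg}--\ref{lem:estimateF} (the same statements with the constraint now $|X|_{H^1_x}\vee|Y|_{H^1_x}\le 2M$ and $\|L\|$ replaced by $\|L\|_{\Lc(H^1_x)}$), together with Lemma~\ref{lem:itoiso} applied in the Hilbert space $H^1_x$, let the contraction argument of Proposition~\ref{prop:eqtrunc} go through with $L^2(\R^3)$ replaced by $H^1(\R^3)$; since the contraction time depends on $M$ but not on $u_0$, one iterates over $[0,T_0]$ and gets a unique $u^M\in L^2(\Omega,C([0,T_0];H^1(\R^3)))$. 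Repeating the Itô computation of Lemma~\ref{lem:estimnormL2} — the factors $\widetilde\Theta_M^2$ and $\widetilde\Theta_M$ again cancel in the expansion of $|u^M(t)|_{L^2_x}^2$ — shows $|u^M(t)|_{L^2_x}=1$ for all $t$; hence on the event $\{|u^M|_{C_{T_0}H^1}<M\}$ one has $\widetilde\Theta_M\equiv 1$, so $u^M$ solves \eqref{eq:interm} there and, by pathwise uniqueness (Proposition~\ref{prop:meq-interm}), coincides with $u$ on $[0,T_0]$.

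The second step is an $H^1$-bound uniform in $M$. Applying $\nabla$ to the Duhamel formula for $u^M$ — legitimate since the integrands take values in $H^1_x$ — and using that $S$ commutes with $\nabla$, one gets
\begin{equation*}
\nabla u^M(t)=S(t)\nabla u_0+\int_0^t S(t-s)\,G^M_s\,\d s+\int_0^t S(t-s)\,H^M_s\,\d\beta_s,
\end{equation*}
where, by $\nabla(F_1(X)X)=F_1(X)\nabla X+[\nabla,L^*L]X-2\langle L\rangle_X[\nabla,L]X$ and $\nabla(F_2(X)X)=F_2(X)\nabla X+[\nabla,L]X$, the processes $G^M_s,H^M_s$ (which take values in $L^2_x$) collect the drift and diffusion coefficients of the differentiated equation. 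Since $|u^M(s)|_{L^2_x}=1$, the operators $F_1(u^M(s)),F_2(u^M(s))$ have $\Lc(L^2_x)$-norms bounded in terms of $\|L\|$ only; combined with $|[\nabla,L]u^M(s)|_{L^2_x}\le\|[\nabla,L]\|_{\Lc(H^1_x,L^2_x)}|u^M(s)|_{H^1_x}$, its analogue for $[\nabla,L^*L]$, and the bound on $V\star\nabla\xi_s$, this gives
\begin{equation*}
|G^M_s|_{L^2_x}+|H^M_s|_{L^2_x}\le C\bigl(1+|u^M(s)|_{H^1_x}\bigr),
\end{equation*}
with $C$ depending only on $|V|_{L^\infty_x}$, $\sup_{t\le T_0}|\xi_t|_{W^{1,1}(\R^3)}$, $\|L\|$ and the two commutator norms — crucially \emph{affine}, not quadratic, in $|u^M(s)|_{H^1_x}$. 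Taking $\sup_{s\le t}$, squaring, using that $S$ is an $L^2_x$-isometry, Lemma~\ref{lem:itoiso} for the stochastic integral, and Gronwall's lemma then yields
\begin{equation*}
\E\sup_{t\le T_0}|u^M(t)|_{H^1_x}^2\le C(T_0)\bigl(1+\E|u_0|_{H^1_x}^2\bigr),
\end{equation*}
with a constant independent of $M$ (here $|u^M(t)|_{L^2_x}=1$ is used again to pass from $|\nabla u^M|_{L^2_x}$ to $|u^M|_{H^1_x}$).

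By Markov's inequality, $\P\bigl(|u^M|_{C_{T_0}H^1}\ge M\bigr)\le C(T_0)(1+\E|u_0|_{H^1_x}^2)M^{-2}\to0$, and on the complementary event $u=u^M$ has continuous $H^1$-valued paths on $[0,T_0]$; letting $M\to\infty$ and using Fatou's lemma to transfer the last bound to $u$ gives $u\in L^2(\Omega,C([0,T_0];H^1(\R^3)))$. I expect the main obstacle to be the first step: establishing the $H^1$ analogues of Lemmas~\ref{lem:estimateavg}--\ref{lem:estimateF} — which is precisely where the hypotheses $[\nabla,L],[\nabla,L^*L]\in\Lc(H^1_x,L^2_x)$ and $\xi\in W^{1,1}$ are needed — and checking that $L^2_x$-norm conservation survives the $M$-truncation so that the coefficients of the $\nabla u^M$-equation stay bounded. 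Once this is in place, the uniform estimate of the second step is essentially routine, precisely because $|u^M(s)|_{L^2_x}\equiv1$ keeps all coefficients affine in $|u^M(s)|_{H^1_x}$, making the Gronwall constant independent of the truncation level $M$.
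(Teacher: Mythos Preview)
Your argument is correct and takes a genuinely different route from the paper. The paper never introduces a second truncation: it stays with $R=1$, uses Lemma~\ref{lem:boundreg} to bound $\E|\Tc_1 u|^2_{C_TH^1}$ affinely in $\E|u|^2_{C_TH^1}$ (this is the estimate \eqref{eq:boundgronwreg}, whose proof is essentially your second step), feeds the Picard iterates $u_n=\Tc_1 u_{n-1}$ of the $L^2$-contraction into this bound to get a uniform $L^2(\Omega,L^\infty(0,T;H^1))$ estimate on $(u_n)$, and passes to the limit by weak-$*$ compactness; $H^1$-continuity is recovered at the end from the mild form. Your approach instead builds an $H^1$-valued solution $u^M$ directly via an $H^1$-truncation and an $H^1$-contraction, checks that $L^2$-norm conservation survives (so the coefficients stay affine in $|u^M|_{H^1}$), derives the $M$-independent bound by Gronwall on the differentiated equation, and removes the truncation by Markov's inequality plus pathwise uniqueness. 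Both proofs hinge on the same core estimate --- the affine $H^1$ control of $\nabla(F_i(X)X)$ coming from $|X|_{L^2}=1$ and the commutator hypotheses, i.e.\ Lemma~\ref{lem:estimFh1} --- but the paper's route is shorter because it recycles the $L^2$ iterates rather than setting up and analysing a new truncated problem, while yours has the advantage that $H^1$-continuity is immediate from the construction and the argument does not depend on a particular approximation scheme.
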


Before starting the proof, we state new key estimates using the additional assumptions on $L$, where the definition of $F_1$ is given by \eqref{eq:defF1} and $F_2$ by \eqref{eq:defF2}.

\begin{lemma}\label{lem:estimFh1}
Fix $R>0$. Let $X \in H^1(\R^d)$, such that $|X|_{L^2_x}  \leq 2 R$. Assume that $L$ is a bounded operator such that $[\nabla, L ] \in \Lc( H^1_x, L^2_x)$ and $[\nabla, L^* L ] \in \Lc( H^1_x, L^2_x)$. Then the following estimates holds:
\begin{itemize}
    \item There exists $C_1(R, L) > 0$ such that
    \begin{equation}\label{eq:estimf1h1}
        |\nabla F_1(X) X|_{L^2_x} \leq C_1(R, L) |X|_{H^1_x}.
    \end{equation}
    \item There exists $C_2(R, L) > 0$ such that
    \begin{equation}\label{eq:estimf2h1}
        |\nabla F_2(X) X|_{L^2_x} \leq C_2(R, L) |X|_{H^1_x}.
    \end{equation}
\end{itemize}
\end{lemma}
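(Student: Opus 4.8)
The plan is to differentiate $F_1(X)X$ and $F_2(X)X$ termwise and estimate each piece separately. The key observation is that $\langle L\rangle_X$ is a complex scalar which does not depend on the space variable, so it passes freely through $\nabla$; what then remains is to move $\nabla$ across the operators $L$ and $L^*L$, which is exactly what the commutator hypotheses are designed for, and to bound the scalar coefficients $\langle L\rangle_X$ and $\langle L\rangle_X^2$ in terms of $R$ and $\|L\|$. For the latter, \eqref{eq:basicestim} applied with $X=Y$ together with $|X|_{L^2_x}\le 2R$ gives $|\langle L\rangle_X|\le \|L\|\,|X|_{L^2_x}^2\le 4R^2\|L\|$.

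First I would record the meaning of the commutator hypotheses: $[\nabla,L]\in\Lc(H^1_x,L^2_x)$ is understood to mean that $LX\in H^1_x$ for every $X\in H^1_x$ and that the identity $\nabla(LX)=L\nabla X+[\nabla,L]X$ holds in $L^2_x$ (which can be justified by density, approximating $X$ by smooth compactly supported functions), and similarly for $[\nabla,L^*L]$. Granting this, for $F_2$ I write $F_2(X)X=LX-\langle L\rangle_X X$, so that
\begin{equation*}
\nabla F_2(X)X = L\nabla X + [\nabla,L]X - \langle L\rangle_X \nabla X,
\end{equation*}
and taking $L^2_x$-norms, using $|\nabla X|_{L^2_x}\le |X|_{H^1_x}$, the bound $|\langle L\rangle_X|\le 4R^2\|L\|$ and the boundedness of $[\nabla,L]$, this yields \eqref{eq:estimf2h1} with, for instance, $C_2(R,L)=\|L\|+\|[\nabla,L]\|_{\Lc(H^1_x,L^2_x)}+4R^2\|L\|$.

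For $F_1$ the computation is the same with one extra layer. Writing $F_1(X)X = L^*LX - 2\langle L\rangle_X LX + \langle L\rangle_X^2 X$ and differentiating,
\begin{equation*}
\nabla F_1(X)X = L^*L\nabla X + [\nabla,L^*L]X - 2\langle L\rangle_X\big(L\nabla X + [\nabla,L]X\big) + \langle L\rangle_X^2\nabla X .
\end{equation*}
Taking $L^2_x$-norms, using $\|L^*L\|\le\|L\|^2$, the bounds $|\langle L\rangle_X|\le 4R^2\|L\|$ and $|\langle L\rangle_X|^2\le 16R^4\|L\|^2$, the boundedness of $[\nabla,L]$ and $[\nabla,L^*L]$, and $|\nabla X|_{L^2_x}\le|X|_{H^1_x}$, gives \eqref{eq:estimf1h1} with the explicit constant $C_1(R,L)=\|L\|^2+\|[\nabla,L^*L]\|_{\Lc(H^1_x,L^2_x)}+8R^2\|L\|\big(\|L\|+\|[\nabla,L]\|_{\Lc(H^1_x,L^2_x)}\big)+16R^4\|L\|^2$.

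There is no real obstacle here: the lemma is essentially a bookkeeping exercise once the commutator hypotheses are read correctly, since the nonlinear scalar factors $\langle L\rangle_X$, $\langle L\rangle_X^2$ commute with $\nabla$ and are controlled by $R$ and $\|L\|$. The only point deserving a line of care is the identity $\nabla(LX)=L\nabla X+[\nabla,L]X$ (and its analogue for $L^*L$), i.e. that $L$ and $L^*L$ map $H^1_x$ into $H^1_x$ under the stated assumptions; this is what makes the termwise differentiation legitimate, and it is handled by the density argument mentioned above.
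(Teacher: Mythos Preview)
Your proof is correct and follows essentially the same approach as the paper's: both split $\nabla F_i(X)X$ into the ``operator part'' $F_i(X)\nabla X$ and the commutator part $[\nabla,F_i(X)]X$, use $|\langle L\rangle_X|\le 4R^2\|L\|$ for the scalar coefficients, and invoke the assumed boundedness of $[\nabla,L]$ and $[\nabla,L^*L]$. The resulting constants are identical; the only difference is cosmetic (you expand termwise at once, the paper groups into $I+II$), and your brief remark on justifying $\nabla(LX)=L\nabla X+[\nabla,L]X$ by density is a welcome addition that the paper leaves implicit.
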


The proof of Lemma \ref{lem:estimFh1} can be found in Appendix \ref{sec:appendix-estF}. Let $R \geq 1$ that will be fixed later to $1$, recall the definition of the application $\Tc_R$ from the proof of Proposition \ref{prop:eqtrunc}, for $u \in C_{T_0} L^2$
\begin{equation}
    \label{eq:rappelmild}
    \begin{aligned}
    \Tc_R u(t) &= S(t) u_0 - \i \int_0^t S(t-s) V \star \xi_s u(s) \d s  \\
&- \frac{1}{2} \int_0^t  \Theta_R(u\restrict{[0,s]})^2 S(t-s) [F_1(u(s))u(s)]\d s \\
&+ \int_0^t\Theta_R(u\restrict{[0,s]}) S(t-s) [F_2(u(s))u(s)] \d \beta_s.
    \end{aligned}
\end{equation}
We now give a key estimate on the application $\Tc_R$ which will be used twice to show the preservation of the regularity.
\begin{lemma}\label{lem:boundreg}
Let $T_0>0$, $R \geq 1$. Assume that $u_0$ and $\xi$ satisfy the same assumptions as in Proposition \ref{prop:eqtrunch1}. Then, there exist $C_1, C_2 >0$ depending only on $|V|_{L^{\infty}_x}, L, R$ and $T_0$ such that for all $u \in L^2(\Omega; C_{T_0}H^1)$, for all $t \leq T_0$
\begin{equation}
    \label{eq:boundgronwreg}
    \begin{aligned}
     \E[\sup_{s \leq t}|\Tc_R u(s)|_{H^1_x}^2] &\leq C_1 \big(\E[|u_0|^2_{H^1_x}] + \int_0^t |\xi_s|_{W^{1,1}_x}^2 \d s \int_0^t \E[|u(s)|_{L^2_x}^2] \d s \big) \\
     &+ C_2 \int_0^t \E[|u(s)|^2_{H^1_x}] \d s.
    \end{aligned}
\end{equation}
\end{lemma}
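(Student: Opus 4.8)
The plan is to differentiate the Duhamel formula \eqref{eq:rappelmild} defining $\Tc_R u$ and estimate the gradient term by term. Since $H=-\Delta$, the group $S(t)=e^{\i t\Delta}$ commutes with $\nabla$ and acts as an isometry of $\dot H^1_x$, while each truncation factor $\Theta_R(u\restrict{[0,s]})$ is a (predictable) scalar and therefore commutes with $S$ and $\nabla$ too. Applying $\nabla$ to \eqref{eq:rappelmild}, one obtains
\[
\begin{aligned}
\nabla \Tc_R u(t) &= S(t)\nabla u_0 - \i\int_0^t S(t-s)\nabla\big[(V\star\xi_s)u(s)\big]\,\d s \\
&\quad - \frac12\int_0^t \Theta_R(u\restrict{[0,s]})^2\, S(t-s)\nabla\big[F_1(u(s))u(s)\big]\,\d s \\
&\quad + \int_0^t \Theta_R(u\restrict{[0,s]})\, S(t-s)\nabla\big[F_2(u(s))u(s)\big]\,\d\beta_s ,
\end{aligned}
\]
which is legitimate because on $\{\Theta_R\neq 0\}$ one has $|u(s)|_{L^2_x}\le 2R$, so $F_1(u(s))u(s)$ and $F_2(u(s))u(s)$ lie in $H^1_x$ by Lemma \ref{lem:estimFh1}, and the $L^2_x$-valued stochastic convolution has the indicated $L^2_x$-valued gradient by Lemma \ref{lem:itoiso} applied with integrand $\Theta_R\nabla[F_2(u(s))u(s)]$. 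I would then write $|\Tc_R u(t)|_{H^1_x}^2\le 2|\Tc_R u(t)|_{L^2_x}^2+2|\nabla\Tc_R u(t)|_{L^2_x}^2$ and bound the two halves separately, taking $\sup_{s\le t}$ and then $\E$ in each.

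For the $L^2_x$-half, the argument is essentially the one already carried out in the proof of Proposition \ref{prop:eqtrunc}, except that, since we do not yet know that the $L^2$-norm is preserved, I would use the \emph{linear} bounds $|F_1(X)X|_{L^2_x}+|F_2(X)X|_{L^2_x}\le C(R,\|L\|)\,|X|_{L^2_x}$ valid for $|X|_{L^2_x}\le 2R$ — a direct consequence of \eqref{eq:basicestim} — together with $|(V\star\xi_s)u(s)|_{L^2_x}\le|V|_{L^\infty_x}|\xi_s|_{L^1_x}|u(s)|_{L^2_x}$, the isometry of $S$, Cauchy--Schwarz in time and \eqref{eq:appliBDG} for the stochastic integral. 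This yields $\E\sup_{s\le t}|\Tc_R u(s)|_{L^2_x}^2\lesssim \E|u_0|_{L^2_x}^2 + \int_0^t|\xi_s|_{L^1_x}^2\,\d s\int_0^t \E|u(s)|_{L^2_x}^2\,\d s + \int_0^t \E|u(s)|_{L^2_x}^2\,\d s$, which is already of the form of the right-hand side of \eqref{eq:boundgronwreg} (using $|\xi_s|_{L^1_x}\le|\xi_s|_{W^{1,1}_x}$ and $|u|_{L^2_x}\le|u|_{H^1_x}$).

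For the $\nabla$-half I would estimate the four terms of the displayed identity one at a time. The initial datum gives $|\nabla u_0|_{L^2_x}\le|u_0|_{H^1_x}$. For the Hartree term, write $\nabla\big[(V\star\xi_s)u(s)\big]=(V\star\nabla\xi_s)\,u(s)+(V\star\xi_s)\,\nabla u(s)$; the first summand is controlled in $L^2_x$ by $|V|_{L^\infty_x}|\nabla\xi_s|_{L^1_x}|u(s)|_{L^2_x}\le|V|_{L^\infty_x}|\xi_s|_{W^{1,1}_x}|u(s)|_{L^2_x}$, while the second — here using the normalization $|\xi_s|_{L^1_x}=1$ — is controlled by $|V|_{L^\infty_x}|u(s)|_{H^1_x}$. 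For the $F_1$- and $F_2$-drift terms, estimates \eqref{eq:estimf1h1}--\eqref{eq:estimf2h1} of Lemma \ref{lem:estimFh1} (applicable because $\Theta_R\neq 0\Rightarrow|u(s)|_{L^2_x}\le 2R$) give $\Theta_R^2\,|\nabla[F_1(u(s))u(s)]|_{L^2_x}\le C(R,L)|u(s)|_{H^1_x}$ and the analogue for $F_2$. Finally, the stochastic term is handled by Lemma \ref{lem:itoiso}: its integrand $\Phi_s:=\Theta_R(u\restrict{[0,s]})\nabla[F_2(u(s))u(s)]$ is predictable and lies in $L^2(\Omega,L^2(0,T_0;L^2_x))$ precisely because $u\in L^2(\Omega,C_{T_0}H^1)$, so \eqref{eq:appliBDG} yields $\E\sup_{s\le t}\big|\int_0^s\Theta_R\, S(s-r)\nabla[F_2(u(r))u(r)]\,\d\beta_r\big|_{L^2_x}^2\le 4C(R,L)^2\int_0^t\E|u(s)|_{H^1_x}^2\,\d s$. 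Collecting the terms, I would apply $\big(\int_0^t|\xi_s|_{W^{1,1}_x}|u(s)|_{L^2_x}\,\d s\big)^2\le\int_0^t|\xi_s|_{W^{1,1}_x}^2\,\d s\int_0^t|u(s)|_{L^2_x}^2\,\d s$ to get the term with constant $C_1$, and $\big(\int_0^t f\,\d s\big)^2\le T_0\int_0^t f^2\,\d s$ (absorbing $T_0$ into $C_2$) to the remaining drift pieces; adding the two halves gives exactly \eqref{eq:boundgronwreg}.

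The computation is routine once Lemma \ref{lem:estimFh1} is in hand. The two points that require genuine care are: (i) the legitimacy of differentiating the Duhamel formula term by term, in particular the stochastic convolution, which rests on $S$ commuting with $\nabla$ on $H^1_x$ and on Lemma \ref{lem:itoiso} applied with the $L^2_x$-valued integrand $\nabla[F_2(u)u]$; and (ii) the bookkeeping of the Hartree term, where the spatial derivative must be placed on $\xi$ in the contribution that feeds the $C_1$-term of \eqref{eq:boundgronwreg} and on $u$ (after using $|\xi_s|_{L^1_x}=1$) in the contribution that feeds the $C_2$-term, so as to reproduce the precise structure of the stated bound.
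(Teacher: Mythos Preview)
Your proposal is correct and follows essentially the same approach as the paper: differentiate the Duhamel formula using $[S(t),\nabla]=0$, split the Hartree term as $(V\star\nabla\xi_s)u(s)+(V\star\xi_s)\nabla u(s)$ (using $|\xi_s|_{L^1_x}=1$ on the second piece), invoke Lemma~\ref{lem:estimFh1} for the $F_1$ and $F_2$ contributions, and handle the stochastic convolution via Lemma~\ref{lem:itoiso}; the $L^2_x$-half is treated separately by the same template with the bounds of Lemma~\ref{lem:estimateF}. The only cosmetic difference is that the paper estimates $|\nabla\Tc_R u(t)|_{L^2_x}-|\nabla u_0|_{L^2_x}$ first and then squares via Young's inequality, whereas you square the four-term decomposition directly.
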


\begin{proof}[Proof of Lemma \ref{lem:boundreg}]
Note that, if $H = - \Delta$, then the unitary group $(S(t))_{t \in \R}$ and $\nabla$ commute, which allows us to take the gradient of \eqref{eq:rappelmild}. Hence, since $(S(t))_{t \in \R}$ is a unitary group we get
\begin{equation*}
    \begin{aligned}
    |\nabla \Tc_R u(t)|_{L^2_x} - |\nabla u_0|_{L^2_x} &\leq \int_0^t |\nabla [(V \star \xi_s) u(s)]|_{L^2_x} \d s  + \frac{1}{2} \int_0^t  \Theta_R(u\restrict{[0,s]})^2 |\nabla F_1(u(s))u(s)|_{L^2_x}\d s \\
&+ |\int_0^t\Theta_R(u\restrict{[0,s]}) S(t-s) \nabla F_2(u(s))u(s) \d \beta_s|_{L^2_x} \\
&=  I + \Frac{1}{2}II + III. 
    \end{aligned}
\end{equation*}
For $I$, observe that
\begin{equation*}
    \begin{aligned}
    I &\leq  \int_0^t |\nabla (V \star \xi_s) u(s)|_{L^2_x} \d s +  \int_0^t |V \star \xi_s \nabla u(s)|_{L^2_x} \d s \\
    &\leq \int_0^t |V \star \nabla \xi_s|_{L^{\infty}_x} |u(s)|_{L^2_x} \d s + \int_0^t |V \star \xi_s|_{L^{\infty}_x} |u(s)|_{H^1_x} \d s \\
    &\leq |V|_{L^{\infty}_x} \int_0^t |\nabla \xi_s|_{L^1_x}|u(s)|_{L^2_x} +  |u(s)|_{H^1_x} \d s,
    \end{aligned}
\end{equation*}
where we used a basic convolution inequality. Hence we get by Cauchy-Schwarz inequality
\begin{equation}
    \label{eq:solh1est1}
    I \leq |V|_{L^{\infty}_x} \big(\int_0^t |\nabla \xi_s|_{L^1_x}^2\d s \int_0^t |u(s)|_{L^2_x}^2 \d s\big)^{1/2} + |V|_{L^{\infty}_x} \sqrt{T_0} \big(\int_0^t |u(s)|_{H^1_x}^2 \d s\big)^{1/2}.
\end{equation}
For $II$, we directly use \eqref{eq:estimf1h1} from Lemma \ref{lem:estimFh1} and Cauchy-Schwarz inequality:
\begin{equation}
    \label{eq:solh1est2}
    \begin{aligned}
    II &\leq C_1 \int_0^t |u(s)|_{H^1_x} \d s\\
    &\leq C_1 \sqrt{T_0} \big(\int_0^t |u(s)|_{H^1_x}^2 \d s\big)^{1/2}.
    \end{aligned}
\end{equation}
For $III$, we start by using Lemma \ref{lem:itoiso} with
\begin{equation*}
    \Phi_s := \Theta_R(u\restrict{[0,s]}) \nabla F_2(u(s))u(s).
\end{equation*}
Then, by \eqref{eq:estimf2h1} from Lemma \ref{lem:estimFh1} we find
\begin{equation}
    \label{eq:solh1est3}
    \E \left[\sup_{t \leq T} |\int_0^t\Theta_R(u\restrict{[0,s]}) S(t-s) \nabla F_2(u(s))u(s) \d \beta_s|_{L^2_x}^2\right] \leq C_2 \int_0^t \E |u(s)|_{H^1_x}^2 \d s.
\end{equation}
Therefore, combining \eqref{eq:solh1est1}-\eqref{eq:solh1est3} with Young's inequality we obtain that there exist constant $C_3, C_4 >0$ depending on $R, L, |V|_{L^{\infty}_x}$ and $T_0$ such that
\begin{equation}
    \label{eq:solh1est4}
    \begin{aligned}
     \E[\sup_{s \leq t}|\nabla \Tc_R u(s)|_{L^2_x}^2] &\leq C_3 \big(\E|\nabla u_0|^2_{L^2_x}] + 1 + \int_0^t |\nabla \xi_s|_{L^1_x}^2 \d s \int_0^t \E[|u(s)|_{L^2_x}^2] \d s \big) \\
     &+ C_4 \int_0^t \E[|u(s)|^2_{H^1_x}] \d s.
    \end{aligned}
\end{equation}
In a similar and simpler way, we can show that, starting directly from \eqref{eq:rappelmild} and  using Lemma \ref{lem:estimateF}, and $|u_0|_{L^2_x} = 1$ a.s., there exists $C_5>0$ (depending on $R, L, |V|_{L^{\infty}_x}$ and $T_0$) such that
\begin{equation}
\label{eq:solh1est5}
    \E[\sup_{s \leq t}| \Tc_R u(s)|_{L^2_x}^2] \leq C_5 \big(1 + \int_0^t | \xi_s|_{L^1_x}^2 \d s\int_0^t \E[|u(s)|_{L^2_x}^2] \d s \big).
\end{equation}
Hence, collecting \eqref{eq:solh1est4} and \eqref{eq:solh1est5}, the desired inequality holds.
\end{proof}
We are now ready to tackle the proofs of Proposition \ref{prop:eqtrunch1} and Theorem \ref{thm:meqh1}.

\begin{proof}[Proof of Proposition \ref{prop:eqtrunch1}]
Fix $R = 1$. By Lemma \ref{lem:boundreg}, there exist $C_1, C_2 >0$ depending on $|V|_{L^{\infty}_x}, L$ and $T_0$ such that for all $t \leq T_0$
\begin{equation*}
    \begin{aligned}
     \E[\sup_{s \leq t}|\Tc_1 u(s)|_{H^1_x}^2] &\leq C_1 \big(\E|u_0|^2_{H^1_x} + \int_0^t |\xi_s|_{W^{1,1}_x}^2 \d s \int_0^t \E[|u(s)|_{L^2_x}^2] \d s \big) \\
     &+ C_2 \int_0^t \E[|u(s)|^2_{H^1_x}] \d s.
    \end{aligned}
\end{equation*}
Since $\xi$ is fixed, by taking the supremum in time for $T \leq T_0$, we find
\begin{equation}
    \label{eq:presreginter}
    \E[|\Tc_1 u|_{L^{\infty}(0,T;H^1_x)}^2] \leq C_1 \E|u_0|^2_{H^1_x} + T \big(C_1 T_0 |\xi|_{L^{\infty}(0,T_0;W^{1,1}_x)}^2 + C_2\big) \E|u|_{L^{\infty}(0,T;H^1_x)}^2.
\end{equation}
By the proofs of Proposition \ref{prop:meq-interm} and of Proposition \ref{prop:eqtrunc} and Banach fixed point theorem, for $T$ small enough, there exists an approximation sequence $(u_n)_{n \in \N} \subset L^2(\Omega, C_TL^2)$ such that, for all $n \geq 1$
\begin{equation*}
    u_n = \Tc_1(u_{n-1}),
\end{equation*}
which converges in $L^2(\Omega, C_TL^2)$ to $u(\xi)$, the solution of \eqref{eq:interm} such that $u(\xi)(0) = u_0$ given by Proposition \ref{prop:eqtrunc}. To show that $u(\xi) \in L^2(\Omega, L^{\infty}(0,T;H^1_x))$, by classical arguments of weak-* convergence it is enough to show that the sequence $u_n$ is uniformly bounded in this space. This follows directly from \eqref{eq:presreginter}. Then, similarly as in the proof of Proposition \ref{prop:eqtrunc}, the result can be extended to the time interval $[0,T_0]$ by pathwise uniqueness. Finally, the continuity in $H^1(\R^d)$ follows from the mild form of the equation and the time continuity of the group $S$. Therefore $u(\xi)$ lies in $L^2(\Omega, C_{T_0}H^1)$, as required.
\end{proof}

\begin{proof}[Proof of Theorem \ref{thm:meqh1}]
First, we recall that for $\xi \in C([0,T_0]; L^1_x)$, we denote by $u(\xi)$ the unique solution of \eqref{eq:interm} in $L^2(\Omega, C_{T_0}L^2)$ such that $u(\xi)(0) = u_0$ given by Proposition \ref{prop:meq-interm}. We also recall that the mapping on $C([0,T_0]; L^1_x)$ defined by:
for $\xi \in C([0,T], L^1(\R^d, \R))$ and $t \in [0, T_0]$:
\begin{equation}
\hat{\Tc}(\xi)(t)(\cdot) := \E[|u(\xi)(t, \cdot)|^2],
\end{equation}
is a contraction for a time $T_1$ small enough, and $T_1$ depend only on $|V|_{L^{\infty}_x}, L$ and $T_0$. We set $\xi_{0,t}(\cdot) = \E |u_0(\cdot)|^2$ for all $t\leq T_0$. Since $\hat{\Tc}$ is a contraction mapping on $C([0,T_1]; L^1_x)$ the sequence $(\xi_n)_{n \in \N}$ defined, for $n \geq 0$, by
\begin{equation}
    \xi_{n+1} = \hat{\Tc} \xi_n,
\end{equation}
is converging to the fixed point $\xi_{\infty}$ in $C([0,T_0]; L^1_x)$. Now, we define a new sequence $(u_n)_{n \in \N} \subset L^2(\Omega, C_{T_1}L^2)$ by
\begin{equation}
    u_n := u(\xi_n),
\end{equation}
and note that, from this definition, $\xi_{n+1}(t)(\cdot) = \E[|u_n(t, \cdot)|^2]$ for $t \leq T_1$. We aim to show that $u(\xi_{\infty})$ lies in $L^2(\Omega, C_{T_1}H^1)$, which is the unique solution of \eqref{eq:mfeq} such that $u(0) = u_0$. To achieve this, we first prove that $(u_n)_{n \in \N}$ converges to $u(\xi_{\infty})$ in $L^2(\Omega, C_{T_1}L^2)$ and second we prove a similar bound as \eqref{eq:presreginter}. This will enable us to show the $H^1$ regularity of $u(\xi_{\infty})$ in a similar way as in the end of the proof of Proposition \ref{prop:eqtrunch1}.

First, we show the convergence of the sequence $(u_n)_{n \in \N}$ to the desired limit. We start by observing that we can improve the bound \eqref{eq:gronwalltronc} in the proof of Theorem \ref{thm:meq} by taking the supremum in time directly in Ito's formula, before taking the expectation. Then we can apply Lemma \ref{lem:itoiso} and Lemma \ref{lem:estimateF} as we did several times earlier in this article. This gives that there exist $C_3, C_4 >0$ (depending only on $|V|_{L^{\infty}_x}, L, T_0$ and $T_1$) such that for all $t \leq T_1$
\begin{equation*}
    \begin{aligned}
 \E [|u(\xi) - u(\zeta)|^2_{C_t L^2}] &\leq T_1 C_3 |\xi - \zeta|_{L^{\infty}(0, T_1; L^1_x)}^2 \\
 &+ C_4 \int_0^t \E [|u(\xi) - u(\zeta)|^2_{C_s L^2}] \d s,
\end{aligned}
\end{equation*}
where $\xi, \zeta \in C([0,T_1]; L^1_x)$. Thus, by applying Gronwall's lemma, it holds that there exists a constant $C_5>0$ depending only on $|V|_{L^{\infty}_x}, L, T_0$ and $T_1$ such that for all $\xi, \zeta \in C([0,T_1]; L^1_x)$
\begin{equation*}
    |u(\xi) - u(\zeta)|_{L^2(\Omega, C_{T_1}L^2)} \leq C_5 |\xi - \zeta|_{L^{\infty}(0, T_1; L^1_x)}.
\end{equation*}
The convergence of the sequence $(u_n)_{n\in\N}$ to $u(\xi_{\infty})$ in $L^2(\Omega, C_{T_1}L^2)$ directly follows from this inequality and the convergence of the sequence $(\xi_n)_{n\in \N}$  to $\xi_{\infty}$ in $C([0,T_1]; L^1_x)$.

Second, we show a similar bound as \eqref{eq:presreginter}. Note that since $|u_0|_{L^2_x} = 1$ almost surely, by the preservation of the $L^2_x$-norm of $u(\xi)$ given by Proposition \ref{prop:meq-interm}, we have for all $n \in \N$ and all $t \leq T_1$
\begin{equation*}
    |\xi_{n,t}|_{L^1_x} = 1.
\end{equation*}
Moreover, note that by definition of the sequence $(u_n)_{n\in \N}$
\begin{equation*}
    \nabla \xi_{n,t}(x) = 2 \E \big[ \Re \: \overline{u_{n-1}(t,x)} \nabla u_{n-1}(t,x) \big],
\end{equation*}
for all $x \in \R^3$ and all $t \leq T_1$. Hence, using twice Cauchy-Schwarz inequality and the preservation of the $L^2_x$-norm of $u_{n-1}$, we obtain
\begin{equation}
    \label{eq:boundw11}
    |\nabla \xi_{n,t}|_{L^1_x} \leq 2 \E[|u_{n-1}(t)|_{L^2_x} |\nabla u_{n-1}(t)|_{L^2_x}] \leq 2 \big(\E[|\nabla u_{n-1}(t)|^2_{L^2_x}]\big)^{1/2}.
\end{equation}
Thus, if $u_{n-1} \in L^2(\Omega, C_{T_1}H^1)$, then $\xi_n \in C([0,T_1];W^{1,1}_x)$. Therefore since $\xi_0$ is in $C([0,T_1];W^{1,1}_x)$, by induction and applying Proposition \ref{prop:eqtrunch1} at each step, $u_n$ lies in the space $L^2(\Omega, C_{T_1}H^1)$ for all $n \in \N$. For now, let $n \geq 1$. Then, since $u_n$ is the fixed point of $\Tc_1$ for $\xi_n$ (see the proof of Proposition \ref{prop:eqtrunch1}), \eqref{eq:boundgronwreg} implies there exist $C_1, C_2 >0$ such that, for all $t \leq T_1$
\begin{equation*}
    \begin{aligned}
     \E[\sup_{s \leq t}|u_n(s)|_{H^1_x}^2] &\leq C_1 \big(\E[|u_0|^2_{H^1_x}] + \int_0^t |\xi_{n,s}|_{W^{1,1}_x}^2 \d s \int_0^t \E[|u_n(s)|_{L^2_x}^2] \d s \big) \\
     &+ C_2 \int_0^t \E[|u_n(s)|^2_{H^1_x}] \d s.
     \end{aligned}
\end{equation*}
Using the preservation of the $L^2_x$-norm of $u_n$ and Gronwall's lemma, we find
\begin{equation}
    \label{eq:breginter}
    \begin{aligned}
     \E |u_n|_{C_{T_1}H^1}^2 &\leq C_1 e^{C_2 T_1} \big(\E[|u_0|^2_{H^1_x}] + \int_0^{T_1} |\xi_{n,s}|_{W^{1,1}_x}^2 \d s \big) \\
     &\leq C_1 e^{C_2 T_1} \big(\E[|u_0|^2_{H^1_x}] + T_1 |\xi_{n}|_{L^{\infty}(0,T;W^{1,1}_x)}^2\big).
    \end{aligned}
\end{equation}
Combining \eqref{eq:breginter} and \eqref{eq:boundw11} yields the existence of $\hat{C}_1, \hat{C}_2 >0$ depending only of $L, T_0, T_1$ and $|V|_{L^{\infty}_x}$ such that 
\begin{equation}
    \E |u_n|_{C_{T_1}H^1}^2 \leq \hat{C}_1 \E[|u_0|^2_{H^1_x}] + T_1 \hat{C}_2 \E |u_{n-1}|_{C_{T}H^1}^2.
\end{equation}
Therefore, using the convergence of the sequence $(u_n)_{n \in \N}$ to $u(\xi_{\infty})$ in $L^2(\Omega, C_{T_1}L^2)$ and the above inequality we can conclude similarly as in the proof of Proposition \ref{prop:eqtrunch1}. Hence, $u(\xi_{\infty}) \in L^2(\Omega, C_{T_1}H^1)$ and the time interval can be extended to $[0,T_0]$ by pathwise uniqueness in a similar way as in the proof of Proposition \ref{prop:eqtrunc}. Finally, since $\xi_{\infty}$ is the fixed point of $\hat{\Tc}$, $u(\xi_{\infty})$ is the unique solution of \eqref{eq:mfeq} such that $u(0) = u_0$ given by Theorem \ref{thm:meq}, achieving the proof of Theorem \ref{thm:meqh1}.
\end{proof}

\subsection{Moment estimate}

We are now equipped to obtain a bound on the moments of the $H^1$-norm of the solution of \eqref{eq:mfeq}.

\begin{proposition}
\label{prop:borneh1}
Let $p\geq 4$ and assume $H, V, L$ and $u_0$ satisfy the assumptions of Proposition \ref{prop:eqtrunch1}. Moreover, assume that $u_0\in L^p(\Omega, H^1(\R^d))$. Let $u$ be the solution of \eqref{eq:mfeq} with initial condition $u_0$. Then there exists a constant $C>0$ depending only on $p, T_0, \| L \|, \| [\nabla, L ]\|_{\Lc( L^2_x, H^1_x)}$ and $|V|_{L^{\infty}_x}$ such that
\begin{equation*}
    \sup_{t \leq T_0} \E [|u(t)|^p_{H^1_x}] \leq C  \E [|u_0|^p_{H^1_x}].
\end{equation*}
\end{proposition}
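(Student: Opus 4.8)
The plan is to close a Gronwall inequality for $g(t):=\E\big[\sup_{s\le t}|u(s)|_{H^1_x}^p\big]$ working directly from the mild formulation \eqref{eq:rappelmild} of \eqref{eq:mfeq}, taken with $R=1$, $\Theta_1\equiv 1$ (legitimate since $|u(s)|_{L^2_x}=1$ for all $s$ by Proposition \ref{prop:meq-interm}) and $\xi_s$ replaced by $\E|u(s)|^2$. Since $H=-\Delta$, the unitary group $S(t)$ commutes with $\nabla$, so applying $\nabla$ to the mild formula expresses $\nabla u(t)$ as $S(t)\nabla u_0$ plus a Hartree drift, an $F_1$ drift and an $F_2$ stochastic convolution, all with a $\nabla$ brought inside; because $S$ is unitary, $\sup_{s\le t}|\nabla u(s)|_{L^2_x}$ is then dominated by $|\nabla u_0|_{L^2_x}$ plus the $L^2_x$-norms of the Hartree and $F_1$ drift integrals, plus the supremum over $s\le t$ of the $F_2$ convolution. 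I would raise this to the power $p$, take expectations, and estimate the four contributions; since $|u(s)|_{L^2_x}=1$ gives $|u(s)|_{H^1_x}^p\le 2^{p/2-1}\big(1+|\nabla u(s)|_{L^2_x}^p\big)$, a bound on $\E\big[\sup_{s\le t}|\nabla u(s)|_{L^2_x}^p\big]$ transfers to $g(t)$.

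For the $F_1$ drift and the $F_2$ convolution I would invoke Lemma \ref{lem:estimFh1} with $R=1$, i.e.\ \eqref{eq:estimf1h1}--\eqref{eq:estimf2h1}, which give $|\nabla F_i(u(s))u(s)|_{L^2_x}\le C|u(s)|_{H^1_x}$; Hölder in time then controls the $F_1$ drift by $C\int_0^t g(s)\,\d s$, and for the $F_2$ term I would use the $L^p$ Burkholder--Davis--Gundy inequality for stochastic convolutions with a unitary group (the $L^p$ analogue of Lemma \ref{lem:itoiso}) together with Hölder in time, again obtaining a contribution $C\int_0^t g(s)\,\d s$. For the Hartree drift the key point is that $\nabla(V\star\E|u(s)|^2)=V\star\nabla\E|u(s)|^2$ with $\nabla\E|u(s)|^2=2\,\E[\Re\,\overline{u(s)}\,\nabla u(s)]$, so by Cauchy--Schwarz and conservation of the $L^2_x$-norm (exactly as in \eqref{eq:boundw11}) one has $\big|\nabla\E|u(s)|^2\big|_{L^1_x}\le 2\big(\E|\nabla u(s)|_{L^2_x}^2\big)^{1/2}$, whence by Young's convolution inequality
\begin{equation*}
\big|\nabla\big[(V\star\E|u(s)|^2)u(s)\big]\big|_{L^2_x}\le 2|V|_{L^\infty_x}\big(\E|\nabla u(s)|_{L^2_x}^2\big)^{1/2}+|V|_{L^\infty_x}|\nabla u(s)|_{L^2_x}.
\end{equation*}
The first summand is \emph{deterministic}, so after raising the time integral to the power $p$ (Hölder in time) and using Jensen's inequality $\big(\E|\nabla u(s)|_{L^2_x}^2\big)^{p/2}\le\E|\nabla u(s)|_{L^2_x}^p$, it too is bounded by $C\int_0^t g(s)\,\d s$. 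Collecting everything I would reach
\begin{equation*}
g(t)\le C\big(1+\E|u_0|_{H^1_x}^p\big)+C\int_0^t g(s)\,\d s,
\end{equation*}
and Gronwall's lemma gives $\sup_{t\le T_0}\E|u(t)|_{H^1_x}^p\le g(T_0)\le C\big(1+\E|u_0|_{H^1_x}^p\big)$; since $|u_0|_{L^2_x}=1$ a.s.\ forces $\E|u_0|_{H^1_x}^p\ge 1$, the $1+$ is absorbed into the constant, yielding the claim.

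One point requires care: this Gronwall argument presupposes that $g(t)$ is finite, whereas Theorem \ref{thm:meqh1} only provides $u\in L^2(\Omega,C_{T_0}H^1)$. I would remedy this by localization: with $\tau_M:=\inf\{t\le T_0:|u(t)|_{H^1_x}\ge M\}$, run the whole estimate on $[0,t\wedge\tau_M]$, where every quantity is bounded so that BDG and Gronwall apply legitimately, obtain a bound uniform in $M$, and let $M\to\infty$ using that $\tau_M\uparrow T_0$ pathwise (continuity of the $H^1_x$-paths) and Fatou's lemma. I expect the main obstacle to be not any single estimate but the bookkeeping around the Hartree term: one must exploit that $V\star\E|u(s)|^2$ depends on the \emph{law} of $u(s)$ and not on $\omega$, so that the $\nabla u(s)$ produced by differentiating the convolution enters only through the deterministic quantity $\E|\nabla u(s)|_{L^2_x}^2$ --- this is precisely what keeps the inequality linear in $g$ rather than quadratic, and hence amenable to Gronwall.
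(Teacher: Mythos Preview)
Your proof is correct but follows a different route from the paper. The paper applies It\^o's formula twice---first to $|\nabla u(t)|^2_{L^2_x}$, then to its $q$-th power ($p=2q$)---localizes via $\tau_K=\inf\{t:|\nabla u(t)|^2_{L^2_x}\ge K\}\wedge T_0$ so that the martingale term vanishes in expectation, bounds each drift contribution pointwise in $t$ using Lemma~\ref{lem:estimFh1}, and closes a Gronwall inequality for $t\mapsto\E|u^K(t)|^p_{H^1_x}$. You instead work from the mild formulation, estimate $\sup_{s\le t}|\nabla u(s)|_{L^2_x}$ pathwise, and use the $L^p$ BDG inequality for the stochastic convolution; this yields the slightly stronger conclusion $\E[\sup_{s\le t}|u(s)|^p_{H^1_x}]\le C\,\E|u_0|^p_{H^1_x}$. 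Both arguments treat the Hartree term the same way, exploiting that $V\star\E|u(s)|^2$ depends only on the law of $u(s)$ so that the gradient produces only the deterministic factor $(\E|\nabla u(s)|^2_{L^2_x})^{1/2}$.

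One small caveat in your localization step (the paper's own proof glosses over the analogous point): that deterministic factor involves the law of the \emph{unstopped} process, so after Jensen you obtain $\int_0^t\E|\nabla u(s)|^p_{L^2_x}\,\d s$ rather than its $\tau_M$-stopped analogue, and the Gronwall for the stopped quantity $g_M$ does not close directly. The clean fix is to bound $(\E|\nabla u(s)|^2_{L^2_x})^{1/2}$ uniformly on $[0,T_0]$ using the $L^2(\Omega,C_{T_0}H^1)$ estimate already furnished by Theorem~\ref{thm:meqh1}; this turns that Hartree contribution into an additive constant depending on $\E|u_0|^2_{H^1_x}\le(\E|u_0|^p_{H^1_x})^{2/p}$, which is then absorbed exactly as you describe.
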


\begin{proof}Let $u$ be the solution of \eqref{eq:mfeq}. To begin, notice that, by taking the gradient, it holds
\begin{equation*}
    \begin{aligned}
     \d \nabla u(t) &= - \i ( \Delta \nabla + \nabla(W_t) + W_t \nabla) u(t) \d t - \Frac{1}{2}\nabla F_1(u(t)) u(t) \d t \\
     &+ \nabla F_2(u(t)) u(t) \d \beta_t,
    \end{aligned}
\end{equation*}
where $W_t = V \star \E|u(t)(\cdot)|^2$. Thus the following expression for the process $(|\nabla u(t)|_{L^2_x}^2)_{t\leq T_0}$ holds by Ito's formula:
\begin{equation*}
    \begin{aligned}
        \d |\nabla u(t)|^2 
&= 2 (\nabla u(t), -\i (\nabla W_t) u(t))_{L^2_x} \d t + ( \nabla u(t), - \nabla F_1(u(t)) u(t))_{L^2_x} \d t \\
&+ |\nabla F_2(u(t)) u(t)|^2_{L^2_x} \d t + 2 (\nabla u(t), \nabla F_2(u(t)) u(t))_{L^2_x} \d \beta_t,
    \end{aligned}
\end{equation*}
by self-adjointness of the multiplication by $W_t$ and of $\Delta$ similarly as in the proof of Lemma \ref{lem:estimnormL2}.

Now, we can bound the moments with a localization arguments. For $K > 0$, set the following stopping time:
\begin{equation*}
    \tau_K = \inf \{ t >0, |\nabla u(t)|^2_{L^2_x} \geq K\} \wedge T_0
\end{equation*}
In the following we will denote $u^K$ the process $u$ stopped by the stopping time $\tau_K$. Then, for $q \geq 2$ such that $p = 2q$, applying Ito's lemma yields
\begin{equation*}
\begin{aligned}
\d (|\nabla u^K(t)|_{L^2_x}^2)^q  &= 2 q |\nabla u^K(t)|_{L^2_x}^{2(q-1)}(\nabla u^K(t), -\i (\nabla W_t) u^K(t))_{L^2_x} \d t \\
&+ q |\nabla u^K(t)|_{L^2_x}^{2(q-1)}(\nabla u^K(t), - \nabla F_1(u^K(t)) u^K(t) )_{L^2_x} \d t \\
&+ q |\nabla u^K(t)|_{L^2_x}^{2(q-1)}|\nabla F_2(u^K(t)) u^K(t)|_{L^2_x}^2\d t \\
&+ 2 q |\nabla u^K(t)|_{L^2_x}^{2(q-1)} (\nabla u^K(t), \nabla F_2(u^K(t))u^K(t) )_{L^2_x}\d \beta_t \\
&+ 2 q (q-1) |\nabla u^K(t)|_{L^2_x}^{2(q-2)}|(\nabla u^K(t), \nabla F_2(u^K(t))u^K(t) )_{L^2_x}|^2 \d t.
\end{aligned}
\end{equation*}
The stopping time enables us to take the expectation, since the stopped martingale part is square integrable thanks to Lemma \ref{lem:estimFh1}. Applying Cauchy-Schwarz inequality on each scalar product gives, after integrating in time
\begin{equation*}
    \begin{aligned}
    \E |\nabla u^K(t)|_{L^2_x}^{2q} - \E |\nabla u^K(0)|_{L^2_x}^{2q} &\leq 2 q \int_0^t \E [|\nabla u^K(s)|_{L^2_x}^{2q -1} |\nabla W_s u^K(s)|_{L^2_x}] \d s \\
    &+ q \int_0^t \E [|\nabla u^K(s)|_{L^2_x}^{2q - 1} |\nabla F_1(u^K(s))u^K(s)|_{L^2_x}] \d s \\
    &+ q \int_0^t \E [|\nabla u^K(s)|_{L^2_x}^{2q -2}|\nabla F_2(u^K(s)) u^K(s)|^2_{L^2_x}] \d s \\
    &+ 2q(q-1) \int_0^t \E [|\nabla u^K(s)|_{L^2_x}^{2q - 2} |\nabla F_2 u^K(s) u^K(s)|^2_{L^2_x}] \d s\\
    &= I + II + III + IV.
    \end{aligned}
\end{equation*}
For $I$, since $u^K$ has almost surely unit $L^2_x$-norm, we have by Cauchy-Schwarz inequality
\begin{equation*}
    \begin{aligned}
    |\nabla W_s u^K(s)|_{L^2_x} &\leq |V \star \nabla \E[|u^K(s)|^2] |_{L^{\infty}_x} |u^K(s)|_{L^2_x} \\
    &= |V \star (\E[ \overline{u^K(s)} \nabla u^K(s) + u^K(s) \overline{\nabla u^K(s)}])|_{L^{\infty}_x} \\
    &\leq 2 |V|_{L^{\infty}_x} \E |\nabla u^K(s)|_{L^2_x}.
    \end{aligned}
\end{equation*}
Therefore, by applying Holder's inequality on both expectations
\begin{equation}
    \label{eq:bdh1estimI}
    \begin{aligned}
    I &\leq 4q |V|_{L^{\infty}_x} \int_0^t \E[|\nabla u^K(s)|_{L^2_x}^{2q -1}] \E[|\nabla u^K(s)|_{L^2_x}] \d s \\
    &\leq 4q |V|_{L^{\infty}_x} \int_0^t \E[|\nabla u^K(s)|_{L^2_x}^{2q}]\d s.
    \end{aligned}
\end{equation}

For $II$, by applying Lemma \ref{lem:estimFh1}:
\begin{equation*}
    \begin{aligned}
    II &\leq q C_1 \Int_0^t \E [|\nabla u^K(s)|_{L^2_x}^{2q - 1} |u^K(s)|_{H^1_x}]\\
    &\leq q C_1 \Int_0^t \E [|u^K(s)|_{H^1_x}^{2q}] \d s.
    \end{aligned}
\end{equation*}
The terms $III$ and $IV$ are treated in a similar way as $II$. Hence there exists $C_2 >0$ independent of $K$ such that
\begin{equation}
    \label{eq:bdh1estimII}
    II + III + IV \leq C_2 \int_0^t \E [|u^K(s)|^{2q}_{H^1_x}] \d s.
\end{equation}
Combining \eqref{eq:bdh1estimI} and \eqref{eq:bdh1estimII} and the fact that $u^K$ has unit $L^2_x$-norm, there exists $C_3 >0$ independent of $K$ such that for all $t \leq T_0$
\begin{equation*}
    \E [|u^K(t)|^{2q}_{H^1_x}]\leq 2^{2q} \E [|u_0|^{2q}_{H^1_x}] + C_3 \int_0^t \E [|u^K(s)|^{2q}_{H^1_x}] \d s.
\end{equation*}
Then, by Gronwall's lemma
\begin{equation}
    \sup_{t \in [0,T_0]} \E [|u^K(t)|^{2q}_{H^1_x}] \leq 2^{2q} e^{C_3 T_0} \E [|u_0|^{2q}_{H^1_x}].
\end{equation}
Finally, as the trajectories are continuous in $H^1(\R^3)$, we can conclude the localization argument.
\end{proof}

\section{Mean-field limit}\label{sec:mflim}

In this section, we prove Theorem \ref{thm:cvgmf}, which is the rigorous derivation of \eqref{eq:mfeq} from a many body system. We begin by defining a number of objects and notations before moving on to the proof of the theorem itself.

\subsection{Definitions and notations}

In this subsection, we introduce definitions and notations that we will use throughout the proof of Theorem \ref{thm:cvgmf}. We start by giving a formulation as a density operator for the solution of \eqref{eq:mfeq}. Recall that we denote by $\Psi_N$ the solution of \eqref{eq:Npeq} driven by $(B^j)_{j \leq N}$ with initial condition $\phi_0^{\otimes N}$ and $\rho^N$ is its associated density operator (given by \eqref{eq:defrhon}). Since $|\psi_N(t)|_{L^2_x} = 1$ for all $t \leq T_0$ almost surely, by \eqref{eq:defrhon} it easily follows that almost surely for all $t \leq T_0$
\begin{equation*}
    \| \rho_t^N \|_1 = |\psi_N(t)|_{L^2_x}^2 =  1.
\end{equation*}
We also recall that we denote by $\phi^{MF,j}$ a solution to \eqref{eq:mfeq} driven by the Brownian motion $B^j$ with initial condition $\phi_0$. When the Brownian motion chosen is not important, the index $j$ will be omitted. For such solution, recall also the definition of the associated projector:
\begin{equation}
    \label{eq:defpj}
    p_{j,t} := |\phi^{MF,j}_t \rangle \langle \phi^{MF,j}_t| = ( \phi^{MF,j}_t, \cdot)_{L^2_x}\phi^{MF,j}_t.
\end{equation}
Notice that this is in fact the density operator associated to the wave function $\phi^{MF,j}$. Therefore, the dynamics for the operator $p_j$ is given by the mean-field Belavkin equation
\begin{equation}
    \label{eq:BelavkinMF}
    \begin{aligned}
     \mathrm{d}p_{j,t}&= -\i[\Delta  + V^{\xi_t}, p_{j,t}]\mathrm{d}t + \Big(L p_{j,t}L^{*} - \frac{1}{2}\{L^{*}L, p_{j,t}\}\Big)\mathrm{d}t \\
     &+ \Big(L p_{j,t} + p_{j,t}L^{*} - tr\big((L+L^{*})p_{j,t}\big)p_{j,t}\Big)\mathrm{d}B^j_t,
    \end{aligned}
\end{equation}
where $\xi(t) = \E[p_{j,t}(x,x)]$ and $V^{\xi(t)}$ stands for the multiplication operator by the function $V \star \xi(t)$. Here, in the definition of $\xi$, $p_{j,t}(\cdot, \cdot)$ stands for the kernel of the operator $p_{j,t}$. Then notice that, by definition of $\xi$ and $p$ we have, for all $t \leq T_0$ and $x \in \R^3$
\begin{equation*}
    \xi_t(x) = \E[|\phi^{MF,j}_t(x)|^2],
\end{equation*}
which implies that the $L^1_x$-norm of $\xi$ is one.

We also define the orthogonal projector $q_{j,t}$ by
\begin{equation}
\label{eq:defqj}
    q_{j,t} := I_d - p_{j,t}.
\end{equation}
Since $p$ is a rank-one projector, it follows that, almost surely, for all $t \leq T_0$
\begin{equation*}
    \| q_{j,t} \| \leq 1.
\end{equation*}
When the projector is viewed as an operator acting on the tensorized space $L^2(\R^{3N}) = L^2(\R^3)^{\otimes N}$, we denote it by
\begin{equation*}
    p^N_{j,t}:=\underbrace{I_d\otimes\cdots\otimes I_d}_{\mbox{\small $j-1$}} \otimes p_{j,t}\otimes \underbrace{I_d\otimes\cdots\otimes I_d}_{\mbox{\small $N-j$}}.
\end{equation*}

To study the convergence of the many body dynamics towards the mean-field dynamics, we study the indicator defined by \cite{pickl2011simple, knowles2010mean} and developed in \cite{kolokoltsov2022quantum} for such stochastic systems. For $J \subset \{0,\dots, N\}$, this indicator of convergence is defined, for $t \leq T_0$, by:

\begin{equation}
    I^{N,J}(t):= 1-\E (\bigotimes_{j \in J}\phi_{t}^{MF,j}, \rho_t^{N,J} \bigotimes_{j \in J}\phi_{t}^{MF,j})_{L^2(\R^{3|J|})} = 1 - \E[\trace (\rho_t^{N,J} 
 \bigotimes_{j \in J}p_{j,t})].
\end{equation}
Two results make the study of this indicator interesting. The proof of both results can be found in \cite{knowles2010mean}. The first result links $I^{N,J}$ and the trace norm distance of the marginals $R^{N,J}$, where we recall that $R^{N,J}$ is defined by
\begin{equation*}
    R_{N,J}(t) := \E \|\rho^{N,J}_t - \bigotimes_{j \in J} p_{j,t}\|_{\Lc^1(L^2(\R^{d |J|}))}.
\end{equation*}
These two indicators are linked by the following inequalities (see Lemma 2.3 from \cite{knowles2010mean}):
\begin{equation*}
    I^{N,J}(t) \leq R^{N,J}(t) \leq 2\sqrt{2 I^{N,J}(t)}.
\end{equation*}
This relationship shows that the convergence to $0$ of $R^{N,J}(t)$ is equivalent to the convergence of $I^{N,J}(t)$. The second important result on $I^{N,J}(t)$ is the following inequality
\begin{equation*}
    I^{N,J}(t) \leq |J| I^{N,1}(t).
\end{equation*}
This inequality shows that, in order to prove Theorem \ref{thm:cvgmf}, it is sufficient to study the convergence of $I^{N,1}$ to $0$ when $N$ goes to infinity.

We define the following process $\hat{I}^{N,j}_t$ for $j \leq N$ by
\begin{equation}
    \label{eq:defhatI}
    \hat{I}^{N,j}_t := \trace(\rho^{N,j}_t q_{j,t}) = 1 - \trace(\rho^{N,j}_t p_{j,t}) = 1 - (\phi_{t}^{MF,j}, \rho^{N,j}_t \phi_t^{MF,j})_{L^2(\R^3)},
\end{equation}
so that $I^{N,j}(t) = \E[\hat{I}^{N,j}_t]$. Moreover, since the $(\phi^{MF,j})_{j \leq N}$ are i.i.d, we have $\E[\hat{I}^{N,j}_t] = \E[\hat{I}^{N,1}_t]$ for all $j\leq N$, by equality in law. 

\subsection{Proof of Theorem \ref{thm:cvgmf}}

We are now ready to prove Theorem \ref{thm:cvgmf}. Start by observing that, by basic properties of the partial trace:
\begin{equation*}
    \hat{I}^{N,1}_t = 1 - \trace(\rho^N_t p_{1,t}^N).
\end{equation*}
Then, applying Ito's formula,  it holds that
\begin{equation*}
    \mathrm{d}\hat{I}^{N,1}_{t} = -\trace\Big(\d\rho^N_{t}p^N_{1,t}\Big) -\trace\Big( \rho^N_{t}\mathrm{d}p^N_{1,t}\Big)-\trace\Big(\d\langle \rho^N, p^N_{1}\rangle_t\Big).
\end{equation*}
When we develop the right hand side of the previous formula, using equations \eqref{eq:BelavkinNp} and \eqref{eq:BelavkinMF}, three types of terms appear. There are drift terms with a Hamiltonian part, drift terms with the coupling operator and martingale terms. Each type of term is grouped together, and denoted respectively $P^{(1)}$, $P^{(2)}$ and $P^{(3)}$. Moreover, when $j \neq 1$, $L_j$ and $p_1$ commute and thus by some careful manipulations, the corresponding terms vanish. At the end, $P^{(2)}$ only contains terms with the coupling operator $L_1$. Full details of the calculations can be found in the proof of Theorem $3.1$ in \cite{kolokoltsov2021law}. Hence, we have
\begin{equation}
    \label{eq:itohatI}
    \mathrm{d}\hat{I}^{N,1}_{t} = (P_t^{(1)} + P_t^{(2)})\mathrm{d}t + \sum_{l=1}^{N}P_t^{(3,l)}\mathrm{d}B_{t}^{l}.
\end{equation}
To deal with martingale terms, we will use the following lemma that shows that the terms $(P^{(3,l)})_l$ are bounded by elements in $L^2(\Omega, L^{\infty}([0,T_0], \R))$.
\begin{lemma}
\label{lem:boundmartP3}
Let $N \geq 2$ and $ l \leq N$. Then, there exists a deterministic constant $C_l > 0$ which depends only of $\| L \|$ such that almost surely
\begin{equation*}
    \sup_{t \leq T_0 } | P^{(3,l)}_t| \leq C_l.
\end{equation*}
\end{lemma}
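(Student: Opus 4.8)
The plan is to identify each term $P^{(3,l)}_t$ explicitly from the Itô expansion \eqref{eq:itohatI} and bound it using only the boundedness of $L$ together with the almost-sure bounds $\|\rho^N_t\|_1 = 1$ and $\|q_{j,t}\| \le 1$, $\|p_{j,t}\| \le 1$. First I would write out $\mathrm{d}\hat I^{N,1}_t$ from the definition $\hat I^{N,1}_t = 1 - \trace(\rho^N_t p^N_{1,t})$, substituting the martingale parts of \eqref{eq:BelavkinNp} and \eqref{eq:BelavkinMF}. The $B^l$-martingale contribution splits into three pieces: from $\mathrm{d}\rho^N_t$ one gets $-\trace\big((L_l \rho^N_t + \rho^N_t L^*_l - \trace((L_l+L^*_l)\rho^N_t)\rho^N_t)\,p^N_{1,t}\big)$; from $\mathrm{d}p^N_{1,t}$ (which only has an $l=1$ martingale part) one gets $-\trace\big(\rho^N_t (L_1 p^N_{1,t} + p^N_{1,t}L^* - \trace((L+L^*)p_{1,t})p^N_{1,t})\big)$; and from the quadratic covariation $\mathrm{d}\langle\rho^N,p^N_1\rangle_t$ a further term, nonzero only for $l=1$, which is again a trace of products of $\rho^N_t$, $p^N_{1,t}$, $L_1$, $L^*_1$ and scalars of the form $\trace((L_1+L^*_1)\rho^N_t)$ or $\trace((L+L^*)p_{1,t})$.

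The key estimate is the standard one: for $A \in \Lc^1$ and $B \in \Lc$, $|\trace(AB)| \le \|A\|_1 \|B\|$. Applying this with $A$ a product involving $\rho^N_t$ (trace norm $1$) and $B$ a product of bounded operators among $L$, $L^*$, $p_{1,t}$, $q_{1,t}$, each factor of operator norm at most $\|L\|$ or $1$, every trace appearing in $P^{(3,l)}_t$ is bounded by a constant times a power of $\|L\|$ (at most $\|L\|^2$ after accounting for the scalar prefactors, which themselves satisfy $|\trace((L_l+L^*_l)\rho^N_t)| \le 2\|L\|$ by the same argument and $\|\rho^N_t\|_1=1$). Collecting these, one obtains a deterministic $C_l$ depending only on $\|L\|$ (for instance of the form $c(\|L\| + \|L\|^2)$), uniformly in $t \le T_0$ and $\omega$, which gives the claimed almost-sure bound $\sup_{t\le T_0}|P^{(3,l)}_t| \le C_l$. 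For $l \ne 1$, the terms from $\mathrm{d}p^N_{1,t}$ and the covariation vanish as noted in the text (since $L_l$ commutes with $p^N_{1,t}$, several pieces cancel), so only the $\mathrm{d}\rho^N_t$ piece contributes; the bound is the same type.

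I do not expect a serious obstacle here — the lemma is essentially a bookkeeping statement. The one point requiring a little care is to confirm that every operator appearing sandwiched against $\rho^N_t$ (or against a rank-one projector) is genuinely bounded: this is where the hypothesis $L \in \Lc(L^2_x)$ is used, and it is what fails for unbounded coupling operators. A secondary subtlety is that $\hat I^{N,1}_t$ involves the partial-trace operator $\rho^{N,1}_t$, but rewriting it as $1 - \trace(\rho^N_t p^N_{1,t})$ on the full space $L^2(\R^{3N})$ (as done just before \eqref{eq:itohatI}) removes any difficulty, since $\|\rho^N_t\|_{\Lc^1(L^2(\R^{3N}))} = 1$ and $\|p^N_{1,t}\|_{\Lc(L^2(\R^{3N}))} = \|p_{1,t}\|_{\Lc(L^2(\R^3))} \le 1$. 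Once the explicit form of $P^{(3,l)}_t$ is written down, the bound is immediate from $|\trace(AB)| \le \|A\|_1\|B\|$ and \eqref{eq:inegnormop}.
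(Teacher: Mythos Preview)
Your proposal is correct and follows essentially the same argument as the paper: write out $P^{(3,l)}_t$ explicitly and bound each trace via $|\trace(AB)| \le \|A\|_1\|B\|$ together with $\|\rho^N_t\|_1 = 1$ and $\|p^N_{1,t}\| \le 1$. One small correction: the quadratic covariation $\d\langle\rho^N, p^N_1\rangle_t$ is a finite-variation (drift) term and contributes nothing to the martingale coefficients $P^{(3,l)}$ --- for $l=1$ the only extra piece comes from the martingale part of $\d p^N_{1,t}$ (the paper describes it as the $l\ge 2$ expression with $p^N_{1,t}$ and $\rho^N_t$ exchanged), and the paper gets the slightly sharper linear bound $4\|L\|$ rather than your $c(\|L\|+\|L\|^2)$.
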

The proof of this lemma can be found in Appendix \ref{app:boundP3}. Therefore, as the martingale part is square integrable, by taking the expectation we have 
\begin{equation*}
    \d I^{N,1}(t) = \E[P_t^{(1)}]\d t  + \E [P_t^{(2)}] \d t,
\end{equation*}
and hence the terms $(P^{(3,l)})_l$ do not play any important part in the following, so we do not give their explicit form in the main body of the article. The two terms in the drift are given by the following:
\begin{equation}
    \label{eq:defP1}
    \begin{aligned}
     P_t^{(1)} =
\trace\left( \i \left[\frac{1}{N}\sum_{j \neq 1}V_{1,j} - V_1^{\xi_t}, I - p^N_{1,t}\right]\rho_t^{N}\right),
    \end{aligned}
\end{equation}
where $V_{1,j}$ is the multiplication operator by $V(x_1-x_j)$ for $x_1, x_j \in \R^3$ and
\begin{equation*}
\begin{aligned}
 P_t^{(2)} &= - \trace\big(p^N_{1,t} L_1\rho^N_tL_1^* + p^N_{1,t} L_1^* \rho^N_t L_1 + p^N_{1,t} L_1^* {\rho}^{N}_tL_1^* + p^N_{1,t} L_1 + {\rho}^{N}_tL_1\big) \\
&+\trace\big(p^N_{1,t}\rho^N_t L_1 + p^N_{1,t} L_1^* \rho^N_t\big)\trace\big(p^N_{1,t}(L_1^*+L_1)\big) \\
&+ \trace\big(p^N_{1,t}\rho^N_tL_1^* + p^N_{1,t} L_1\rho^N_t\big)\trace\big(\rho^N_t(L_1^*+L_1)\big)\\
&- \trace\big(\rho^N_tp^N_{1,t}\big)\trace\big(\rho^N_t(L_1^* + L_1)\big)\trace\big(p^N_{1,t}(L_1+L_1^*)\big).
\end{aligned}
\end{equation*}
Then, by using the basic property of the partial trace, since $p^N_1$ and $L_1$ acts only on the coordinates $x_1$, this term can be rewritten as
\begin{equation*}
\begin{aligned}
 P_t^{(2)} &= - \trace\big(p_{1,t} L\rho^{N,1}_t L^* + p_{1,t} L^* \rho^{N,1}_t L + p_{1,t} L^* {\rho}^{N,1}_t L^* + p_{1,t} L_1 + {\rho}^{N,1}_t L\big) \\
&+\trace\big(p_{1,t}\rho^{N,1}_t L + p_{1,t} L^* \rho^{N,1}_t\big)\trace\big(p_{1,t}(L^*+L)\big) \\
&+ \trace\big(p_{1,t}\rho^{N,1}_t L^* + p_{1,t} L\rho^{N,1}_t\big)\trace\big(\rho^{N,1}_t(L^*+L)\big)\\
&- \trace\big(\rho^{N,1}_t p_{1,t}\big)\trace\big(\rho^{N,1}_t(L^* + L)\big) \trace\big(p_{1,t}(L+L^*)\big).
\end{aligned}
\end{equation*}
Notice that, if $\rho^N$ is given by \eqref{eq:defrhon}, then $\rho^{N,1}$ is a density operator. The term with $P^{(2)}$ is tackled by a lemma first proved in \cite[Lemma 6.1]{kolokoltsov2021law}, which is recalled in Appendix \ref{sec:appendix-techn} in Proposition \ref{prop:lemKol} (where an alternative proof is given). A direct application of this proposition leads to the following lemma.
\begin{lemma}
Let $t \leq T_0$ and $N \in \N^*$. Then, there exists a universal constant $C > 0$ such that
\begin{equation*}
    |P_t^{(2)}| \leq C \| L \|^2 (1 - \trace \rho^{N,1}_{t} p_{1,t}).
\end{equation*}
\end{lemma}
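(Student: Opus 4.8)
The plan is to recognise $P^{(2)}_t$ as exactly the quantity estimated in Proposition~\ref{prop:lemKol} and to conclude by a direct invocation of that result. First I would fix $t \le T_0$ and abbreviate $\rho := \rho^{N,1}_t$, $p := p_{1,t}$ and $q := q_{1,t} = I_d - p$, all regarded as operators on $L^2(\R^3)$; here $\rho$ is a density operator because $\rho^N$ is the pure state associated with $\Psi_N$, so that $\rho^N_t$ has trace one and is positive and symmetric, and these properties are inherited by the partial trace, while $p = |\phi^{MF,1}_t\rangle\langle\phi^{MF,1}_t|$ is the rank-one orthogonal projector onto the unit vector $\phi^{MF,1}_t$. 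In particular $\trace(\rho q) = \trace\rho - \trace(\rho p) = 1 - \trace(\rho^{N,1}_t p_{1,t}) = \hat I^{N,1}_t$, so the asserted inequality is exactly $|P^{(2)}_t| \le C\|L\|^2 \trace(\rho q)$.

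Next I would use the partial-trace identities already employed above to write $P^{(2)}_t$ purely in terms of the single-particle operators $\rho$, $p$, $L$ and $L^*$ (this is essentially done in the displayed formula for $P^{(2)}_t$), and then match this finite linear combination of traces, term by term, with the combination appearing in Proposition~\ref{prop:lemKol}. The substance lies in that proposition: the ``fully projected'' part of $P^{(2)}_t$ --- the contributions in which every $L$ and every $L^*$ sits between copies of $\phi^{MF,1}_t$, producing only the scalars $(\phi^{MF,1}_t, L\phi^{MF,1}_t)_{L^2_x}$ and their conjugates --- cancels identically against the terms generated by $\langle L\rangle_{\phi^{MF,1}_t}$ in the mean-field Belavkin equation \eqref{eq:BelavkinMF}; once this cancellation is performed, every surviving summand carries a factor $q$ adjacent to $\rho$ or to $p$, and can therefore be bounded by $\|q\sqrt{\rho}\|_{\Lc^2}^2 = \trace(q\rho q) = \trace(\rho q)$, or by $\trace(\rho q)^{1/2}$ paired with a second such factor through the Cauchy--Schwarz inequality for the Hilbert--Schmidt inner product, while the remaining factors are controlled by $\|\rho\|_1 = 1$, $\|p\| = \|q\| \le 1$ and $\|L^*\| = \|L\|$. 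Collecting the finitely many terms yields a universal constant $C>0$ with $|P^{(2)}_t| \le C\|L\|^2\,\trace(\rho^{N,1}_t q_{1,t})$, which by the identity of the previous paragraph is the claimed bound; since $C$ comes from Proposition~\ref{prop:lemKol} it depends on neither $N$, $t$ nor the realisation.

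The main obstacle is purely combinatorial: checking that the leading, mean-field part of $P^{(2)}_t$ cancels, so that what is left is genuinely of order $\hat I^{N,1}_t$ rather than of order $\sqrt{\hat I^{N,1}_t}$ or $O(1)$ --- this linearity in $\hat I^{N,1}_t$ is exactly what will make the subsequent Gr\"onwall argument for $I^{N,1}$ work. But this cancellation is precisely the content of Proposition~\ref{prop:lemKol} (Lemma~6.1 of \cite{kolokoltsov2021law}), so the only work here is to align the notation and to verify that its hypotheses --- a density operator, a rank-one orthogonal projector and a bounded coupling operator --- are all satisfied, which they are.
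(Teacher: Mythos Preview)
Your proposal is correct and follows the paper's own approach exactly: the paper states that the lemma is ``a direct application of this proposition'' (i.e.\ Proposition~\ref{prop:lemKol}) after rewriting $P^{(2)}_t$ via the partial trace in terms of the single-particle operators $\rho^{N,1}_t$, $p_{1,t}$ and $L$, and you do precisely this, additionally spelling out why the hypotheses (density operator, rank-one projector, bounded $L$) are met.
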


Thus, there exists $C_1 >0$ depending only of $\| L \|$ such that:
\begin{equation}
    \label{eq:ineqcvgP2}
    \E P_t^{(2)} \leq C_1(\| L \|) I^{N,1}_t.
\end{equation}

It remains to estimate the term $P_t^{(1)}$. Similarly as in \cite{kolokoltsov2022quantum}, let us define the following random functions:
\begin{equation}
    \label{eq:defdeltaf}
    \delta^N_t(x) = \Frac{1}{N-1} \sum_{j=2}^N |\phi^{MF,j}_t(x)|^2 - \xi_t(x),
\end{equation}
where $\xi_t(x) = \E |\phi^{MF}_t(x)|^2=\E(p_{1,t}(x,x))$ for all $t \leq T_0$ and $x \in \R^3$. Some estimates will be needed on this function in the proof, they are summarised in the following lemma:

\begin{lemma}
\label{lem:bdelta}
Let $T_0 >0$ and $N \geq 2$. Let $\delta^N$ be the random function defined in \eqref{eq:defdeltaf}. Then for all $t\leq T_0$ we have
\begin{equation}\label{eq:bounddeltaL1}
    \E |\delta_t^N|_{L^1_x}  \leq 2,
\end{equation}
and there exists $C(T_0) >0$ depending on $T_0$ but independent of $N$, such that for all $t \leq T_0$
\begin{equation}
    \label{eq:bounddeltaL2}
    \E |\delta_t^N|_{L^2_x} \leq \Frac{C(T_0)}{\sqrt{N-1}}.
\end{equation}
\end{lemma}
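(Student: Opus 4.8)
The plan is to handle the two bounds separately: \eqref{eq:bounddeltaL1} is elementary, while \eqref{eq:bounddeltaL2} rests on the $H^1$-moment estimate of Proposition \ref{prop:borneh1} combined with the embedding $H^1(\R^3)\hookrightarrow L^4(\R^3)$ (through a Gagliardo--Nirenberg inequality).

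For \eqref{eq:bounddeltaL1} I would apply the triangle inequality in $L^1_x$ directly to the definition \eqref{eq:defdeltaf},
\[
    |\delta_t^N|_{L^1_x} \;\leq\; \Frac{1}{N-1}\sum_{j=2}^N \big|\,|\phi^{MF,j}_t|^2\,\big|_{L^1_x} \;+\; |\xi_t|_{L^1_x}.
\]
Since each $\phi^{MF,j}_t$ has unit $L^2_x$-norm almost surely (Theorem \ref{thm:meq}), $\big|\,|\phi^{MF,j}_t|^2\,\big|_{L^1_x} = |\phi^{MF,j}_t|^2_{L^2_x} = 1$ a.s., and by Tonelli $|\xi_t|_{L^1_x} = \E|\phi^{MF}_t|^2_{L^2_x} = 1$. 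Hence $|\delta_t^N|_{L^1_x}\leq 2$ almost surely, and \eqref{eq:bounddeltaL1} follows by taking expectations.

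For \eqref{eq:bounddeltaL2} I would first reduce to the second moment via Cauchy--Schwarz, $\E|\delta_t^N|_{L^2_x}\leq (\E|\delta_t^N|^2_{L^2_x})^{1/2}$. Writing $Y_j(x) := |\phi^{MF,j}_t(x)|^2 - \xi_t(x)$, which for a.e.\ $x$ are centered and i.i.d.\ in $j$ (as the $(\phi^{MF,j})_{j\leq N}$ are), Tonelli's theorem and the vanishing of the cross terms give
\[
    \E|\delta_t^N|^2_{L^2_x} \;=\; \Frac{1}{(N-1)^2}\int_{\R^3}\E\Big|\sum_{j=2}^N Y_j(x)\Big|^2\d x \;=\; \Frac{1}{N-1}\,\E\,\big|\,|\phi^{MF}_t|^2 - \xi_t\,\big|^2_{L^2_x}.
\]
It then remains to bound $\E\,\big|\,|\phi^{MF}_t|^2 - \xi_t\,\big|^2_{L^2_x}$ uniformly in $t\leq T_0$. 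Bounding it by $2\,\E|\phi^{MF}_t|^4_{L^4_x} + 2\,|\xi_t|^2_{L^2_x}$, the Gagliardo--Nirenberg inequality $|\phi|_{L^4_x}\leq C|\phi|_{L^2_x}^{1/4}|\nabla\phi|_{L^2_x}^{3/4}$ together with $|\phi^{MF}_t|_{L^2_x}=1$ gives $|\phi^{MF}_t|^4_{L^4_x}\leq C|\phi^{MF}_t|^3_{H^1_x}$; and Minkowski's integral inequality gives $|\xi_t|_{L^2_x} = \big|\,\E|\phi^{MF}_t|^2\,\big|_{L^2_x}\leq \E\,|\phi^{MF}_t|^2_{L^4_x}\leq C\,\E|\phi^{MF}_t|^{3/2}_{H^1_x}$, hence $|\xi_t|^2_{L^2_x}\leq C\,\E|\phi^{MF}_t|^3_{H^1_x}$ by Jensen. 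Taking expectations and using Jensen once more, $\E|\phi^{MF}_t|^3_{H^1_x}\leq \big(\E|\phi^{MF}_t|^4_{H^1_x}\big)^{3/4}$, which is bounded uniformly on $[0,T_0]$ by Proposition \ref{prop:borneh1} applied with $p=4$ --- legitimate because $\phi_0\in L^p(\Omega,H^1)$ with $p\geq 4$ in Theorem \ref{thm:cvgmf}. This produces a constant $C(T_0)$ with $\E|\delta_t^N|^2_{L^2_x}\leq C(T_0)/(N-1)$, which is \eqref{eq:bounddeltaL2}.

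The real content of the lemma is Proposition \ref{prop:borneh1}: once the uniform $H^1$ moment bound is granted, the remaining work is routine, and the two points to get right are the decorrelation step producing the $1/(N-1)$ factor and the $L^4$--$H^1$ Sobolev estimate, while checking that the constants stay uniform in $t\in[0,T_0]$ (which they do, by the supremum in Proposition \ref{prop:borneh1}). I therefore do not expect a genuine obstacle here --- the substantive difficulty was already absorbed into the well-posedness and moment estimates of Section \ref{sec:propreg}.
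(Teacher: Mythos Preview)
Your proposal is correct and follows essentially the same route as the paper: triangle inequality plus norm conservation for \eqref{eq:bounddeltaL1}, and i.i.d.\ decorrelation followed by Gagliardo--Nirenberg and Proposition~\ref{prop:borneh1} for \eqref{eq:bounddeltaL2}. The only minor detour is that you keep the exact variance $\E\big|\,|\phi^{MF}_t|^2-\xi_t\,\big|^2_{L^2_x}$ and then split it, whereas the paper observes directly that this variance is $\leq \E|\phi^{MF}_t|^4_{L^4_x}$ (since $\mathrm{Var}(X)\leq \E X^2$ pointwise in $x$), which spares you the separate treatment of $|\xi_t|^2_{L^2_x}$.
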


\begin{proof}
The first bound follows directly from the triangular inequality:
\begin{equation*}
    \E |\delta_t^N|_{L^1_x}  \leq \Frac{1}{N-1} \sum_{j=2}^N \E \left[|\phi^{MF,j}_t|^2_{L^2_x}\right] + |\xi_t|_{L^1_x} \leq 2
\end{equation*}
where we used the conservation of the $L^2_x$-norm of $\phi^{MF,j}$ and of the $L^1_x$-norm of $\xi$.

For the second bound, first notice that
\begin{equation}
    \label{eq:bounddeltaint1}
    \E |\delta_t^N|_{L^2_x}^2 \leq \Frac{1}{N-1}\E |\phi^{MF}_t|^4_{L^4_x}
\end{equation}
since, for all $t \leq T_0$, the $(|\phi_t^{MF,j}|^2)_j$ is a sequence of i.i.d. random functions of mean $\xi_t$. We recall the following sub-case of the Gagliardo-Nirenberg's inequality
\begin{equation*}
    |u|_{L^4_x} \leq |u|_{L^2_x}^{1/4}  |\nabla u|_{L^2_x}^{3/4}.
\end{equation*}
Thanks to the above inequality and the almost sure conservation of the $L^2_x$-norm of $\phi^{MF}$:
\begin{equation}
    \label{eq:bounddeltaint2}
    \E |\phi^{MF}_t|^4_{L^4_x} \leq \E |\phi^{MF}_t|^{3}_{H^1_x}.
\end{equation}
By assumption, there exists $p \geq 4$ such that $\phi_0 \in L^p(\Omega, H^1_x)$. Thus by Proposition \ref{prop:borneh1}, there exists $\hat{C}>0$, which depends on $T_0$ but is independent of $N$, such that 
\begin{equation}
    \label{eq:bounddeltaint3}
    \E |\phi^{MF}_t|^{3}_{H^1_x}\leq (\E |\phi^{MF}_t|^{p}_{H^1_x})^{3/p} \leq (\hat{C} \E |\phi_0|^{p}_{H^1_x})^{3/p} = C^2,
\end{equation}
where we used Hölder's inequality for the first inequality. Collecting \eqref{eq:bounddeltaint1}, \eqref{eq:bounddeltaint2} and \eqref{eq:bounddeltaint3} yields by Cauchy-Schwarz inequality
\begin{equation*}
    \E |\delta_t^N|_{L^2_x} \leq \Frac{C(T_0)}{\sqrt{N-1}},
\end{equation*}
which concludes the proof of the lemma.
\end{proof}

We now go back to the proof of Theorem \ref{thm:cvgmf}. The quantity $P_t^{(1)}$, given by \eqref{eq:defP1}, can break down as follows
\begin{equation}
    \label{eq:decompoP1}
    \begin{aligned}
     |P_t^{(1)}| &\leq \Frac{2}{N} \left|\trace \left( \sum_{j=2}^N(V_{1,j} - V_1^{|\phi^{MF,j}_t|^2})q_{j,t}^N \rho^N_t \right)\right|  + 2 |\trace(V^{\delta^N_t} q_{1,t}^N \rho^N_t)| \\
     &+ \Frac{2}{N} |Tr (V_1^{\xi_t} q_{1,t}^N \rho^N_t)|
\\
&= 2( I + II + III),
    \end{aligned}
\end{equation}
where the projectors $(q_{j,t})_{j \leq N}$ are defined by \eqref{eq:defqj}.
For the term $III$, notice that by trace inequalities:
\begin{equation*}
    III \leq \Frac{1}{N}\| V_1^{\xi_t} \| \|q_{1,t}^N \rho^N_t \|_{\Lc^1} \leq \Frac{1}{N}\| V_1^{\xi_t} \| \| q_{1,t}^N \| \| \rho^N_t\|_{\Lc^1}.
\end{equation*}
Moreover, since $V_1^{\xi_t}$ is a multiplication operator, observe that by basic convolution inequality
\begin{equation*}
    \| V_1^{\xi_t} \| = |V \star \xi_t|_{L^{\infty}_x} \leq |V|_{L^{\infty}_x} |\xi_t|_{L^1_x}.
\end{equation*}
And finally, recall that $|\xi_t|_{L^1_x} = 1$ for all $t \leq T_0$ since the $L^2_x$ norm is preserved for the solution of \eqref{eq:mfeq}.
Thus, by using the bounds on the norms of $q_{j,t}$ and $\rho^N_t$, we obtain
\begin{equation}
    \label{eq:ineqcvgIII}
    III \leq \Frac{1}{N}|V|_{L^{\infty}_x}.
\end{equation}

Now, for the term $II$, we take a function $\theta \in C^{\infty}(\R_+, \R_+)$ with compact support, bounded by $1$ and such that
\begin{equation*}
    \theta(x) = 0, \text{ for } x\geq 2 \text{ and } \theta(x) = 1, \text{ for } x\leq 1.
\end{equation*} 
For $M_N>0$ that may depend on $N$, set:
\begin{equation}
    U_{M_N}(x) = \theta\left(\Frac{|x|}{M_N}\right). 
\end{equation}
In a similar way as for $III$, we obtain
\begin{equation*}
    II \leq \|V^{\delta^N_t} \| = |V \star \delta^N_t|_{L^{\infty}_x} \leq |(U_{M_N} V) \star \delta^N_t|_{L^{\infty}_x} + |((1-U_{M_N})V) \star \delta^N_t|_{L^{\infty}_x} = II_1 + II_2
\end{equation*}
Then, by a basic convolution inequality, we find that there exists $C_2 >0$ independent of $N$ such that
\begin{eqnarray*}
II_1 \leq |U_{M_N} V|_{L^2_x} |\delta^N_t|_{L^2_x} \leq C_2 M_N^3 |V|_{L^{\infty}_x} |\delta^N_t|_{L^2_x}.
\end{eqnarray*}
Moreover, again by a basic convolution inequality, it holds that
\begin{equation*}
    II_2 \leq \sup_{|x|>M_N}|V(x)| \; |\delta_t^N|_{L^1_x}.
\end{equation*}
Collecting both inequalities and taking the expectations yield
\begin{equation*}
    \E II \leq C_2 M_N^3 |V|_{L^{\infty}_x} \E |\delta^N_t|_{L^2_x} + \sup_{|x|>M_N}|V(x)|\;  \E |\delta_t^N|_{L^1_x}.
\end{equation*}
Then, by the estimates \eqref{eq:bounddeltaL1} and \eqref{eq:bounddeltaL2} of Lemma \ref{lem:bdelta}
\begin{equation}
\label{eq:ineqcvgII}
    \E II \leq C_2 \Frac{M_N^3}{\sqrt{N-1}} + 2  \sup_{|x|>M_N}|V(x)|. 
\end{equation}
By taking, for instance, $\eta \in (0, 1/2)$ and
\begin{equation}
    \label{eq:defMN}
    M_N = (N-1)^{(1/2 - \eta)/3}
\end{equation}
the left hand side in \eqref{eq:ineqcvgII} goes to $0$ as $N$ goes to infinity.

Finally, the term $I$ is the one with the most involved estimations. First, recall that $\rho^N$ is the projector on the wave function $\Psi^N$, hence $I$ can be rewritten as
\begin{equation*}
    I = \Frac{1}{N}|(\Psi^N_t, \sum_{j=2}^N (V_{1,j}-V^{|\phi^{MF,j}_t|^2}_1) q_{1,t}^N \Psi^N_t)_{L^2_x}|.
\end{equation*}
Second, notice that for all $j \leq N$:
\begin{equation}
\label{eq:relationpq}
    I_d^{\otimes N} = p_{j,t}^N + q_{j,t}^N,
\end{equation}
which leads to the following decomposition:
\begin{equation*}
    I \leq \frac{1}{N}\sum_{j=2}^N |(\Psi^N_t, q_j^N (V_{1,j}-V^{|\phi^{MF,j}_t|^2}_1) q_1^N \Psi^N)_{L^2_x}| + \frac{1}{N}\sum_{j=2}^N |(\Psi^N_t, p_j^N (V_{1,j}-V^{|\phi^{MF,j}_t|^2}_1) q_1^N \Psi^N)_{L^2_x}|.
\end{equation*}
For the first term in the right hand side, by Cauchy-Schwarz inequality,
\begin{eqnarray*}
|(\Psi^N_t, q_j^N (V_{1,j}-V^{|\phi^{MF,j}_t|^2}_1) q_1^N \Psi^N)_{L^2_x}| &\leq& |q_{j,t}^N \Psi^N_t|_{L^2_x} \| V_{1,j}-V^{|\phi^{MF,j}_t|^2}_1\| \; |q_{1,t}^N \Psi^N_t|_{L^2_x} \\
&\leq& \sqrt{\hat{I}^{N,j}_{t}}\sqrt{\hat{I}^{N,1}_{t}} (|V|_{L^{\infty}_x} + |V \star |\phi^{MF,j}_t|^2|_{L^{\infty}_x}) \\
&\leq& \Frac{1}{2}(\hat{I}^{N,j}_{t} + \hat{I}^{N,1}_{t})(|V|_{L^{\infty}_x} + |V|_{L^{\infty}_x} |\phi^{MF,j}_t|^2_{L^{2}_x}) \\
&\leq& |V|_{L^{\infty}_x} (\hat{I}^{N,j}_{t} + \hat{I}^{N,1}_{t}),
\end{eqnarray*}
where we used the the conservation of the $L^2_x$-norm of $\phi^{MF}$. We also used that, by definition of $\rho^N$ with $\Psi^N$,
\begin{equation*}
    |q_{j,t}^N \Psi^N_t|_{L^2_x}^2 = (\Psi^N_t, q_{j,t}^N \Psi^N_t)_{L^2_x} = \trace q_{j,t}^N \rho^N_t = \hat{I}^{N,j}_t.
\end{equation*}
Hence we obtain
\begin{equation}
    \label{eq:ineqIp1}
    \frac{1}{N}\sum_{j=2}^N |(\Psi^N_t, q_j^N (V_{1,j}-V^{|\phi^{MF,j}_t|^2}_1) q_1^N \Psi^N)_{L^2_x}| \leq |V|_{L^{\infty}_x} (\Frac{1}{N}\sum_{j=2}^N \hat{I}^{N,j}_{t} + \hat{I}^{N,1}_{t}).
\end{equation}
By taking the expectation and the mean equality of the $\hat{I}^{N,j}$ noticed before, the above inequality becomes
\begin{equation}
    \label{eq:boundIfirst}
    \frac{1}{N}\sum_{j=2}^N \E (\Psi^N_t, q_j^N (V_{1,j}-V^{|\phi^{MF,j}_t|^2}_1) q_1^N \Psi^N)_{L^2_x} \leq 2 |V|_{L^{\infty}_x} I^{N,1}(t).
\end{equation}

For the second term, we use once again a decomposition using the relationship \eqref{eq:relationpq} to obtain
\begin{eqnarray*}
|(\Psi^N_t, p_j^N (V_{1,j}-V^{|\phi^{MF,j}_t|^2}_1) q_1^N \Psi^N)_{L^2_x}| &\leq& |(\Psi^N_t, p_j^N (V_{1,j}-V^{|\phi^{MF,j}_t|^2}_1) p_j^N q_1^N \Psi^N)_{L^2_x}| \\
&+& |(\Psi^N_t, p_j^N (V_{1,j}-V^{|\phi^{MF,j}_t|^2}_1) q_j^N q_1^N \Psi^N)_{L^2_x}| \\
&=& A_j + B_j.
\end{eqnarray*}
By definition of $p_j$ and a computation done in \cite{knowles2010mean} (term $(3.17)$), we remark that
\begin{equation*}
    p_j^NV_{1,j} p^N_j = V_1^{|\phi^{MF,j}_t|^2} p_j^N.
\end{equation*}
Thus, it directly follows that $A_j=0$ for all $j \geq 2$.
For $B_j$, by \eqref{eq:relationpq}, we find the following decomposition
\begin{equation}
    \label{eq:defbj12}
    \begin{aligned}
     B_j &\leq |(\Psi^N_t, q_1^N p_j^N (V_{1,j}-V^{|\phi^{MF,j}_t|^2}_1) q_j^N q_1^N \psi^N)_{L^2_x}| \\
&+ |(\Psi^N_t, p_1^N p_j^N (V_{1,j}-V^{|\phi^{MF,j}_t|^2}_1) q_j^N q_1^N \psi^N)_{L^2_x}| \\
&= B_j^1 + B_j^2.
    \end{aligned}
\end{equation}

For $B^1_j$, by Cauchy-Schwarz and similar manipulations as before
\begin{eqnarray*}
B_j^1 &\leq& |p_j^N q_1^N \Psi^N|_{L^2_x} |q_j^N q_1^N \Psi^N|_{L^2_x} \| V_{1,j}-V^{|\phi^{MF,j}_t|^2}_1\| \\
&\leq& 2 |V|_{L^{\infty}_x} | q_1^N \Psi^N|_{L^2_x}^2.
\end{eqnarray*}
Therefore, we have the following bound:
\begin{equation}
    \label{eq:ineqB1}
    B_j^1 \leq 2 |V|_{L^{\infty}_x}\hat{I}^{N,1}_{t}.
\end{equation}
The term $B^2_j$ needs the tools developed in \cite{pickl2011simple,knowles2010mean}, and then extended to a stochastic regime in \cite{kolokoltsov2022quantum}. The bound on $B^2_j$ is
\begin{equation}
    \label{eq:ineqB2}
    \E B^2_j \leq 2 |V|_{L^{\infty}_x}(I^{N,1}_t + \Frac{1}{N}).
\end{equation}
The proof of the above inequality is briefly recalled in Appendix \ref{app:boundKol}. Combining \eqref{eq:defbj12}, \eqref{eq:ineqB1} and \eqref{eq:ineqB2}, it holds that
\begin{equation}
    \label{eq:boundIsecond}
    \Frac{1}{N}\sum_{j=2}^N \E (\Psi^N_t, p_j^N (V_{1,j}-V^{|\phi^{MF,j}_t|^2}_1) q_1^N \Psi^N)_{L^2_x} \leq 4 |V|_{L^{\infty}_x} (I^{N,1}(t) + \Frac{1}{N}).
\end{equation}

By collecting the results \eqref{eq:boundIfirst} and \eqref{eq:boundIsecond}, we find that there exists a universal constant $C_3 >0$ such that
\begin{equation}
    \label{eq:ineqcvgI}
    \E I \leq C_3 |V|_{L^{\infty}_x} \big(I^{N,1}(t) + \Frac{1}{N}\big).
\end{equation}
Hence, by \eqref{eq:decompoP1} and then combining \eqref{eq:ineqcvgIII}, \eqref{eq:ineqcvgII} and \eqref{eq:ineqcvgI}, the bound on $P^{(1)}$ becomes
\begin{equation}
    \label{eq:ineqcvgP1}
    \E P^{(1)}_t \leq C_4 \big( |V|_{L^{\infty}_x} I^{N,1}(t) + \Frac{1}{N}|V|_{L^{\infty}_x} + \Frac{M_N^3}{\sqrt{N-1}} +  \sup_{|x|>M_N}|V(x)| \big),
\end{equation}
where $C_4 >0$ is a universal constant. Finally, collecting \eqref{eq:ineqcvgP2} and\eqref{eq:ineqcvgP1} with $\eta \in (0, 1/2)$ and $M_N$ given by \eqref{eq:defMN} for such $\eta$, the bound on $I^{N,1}_t$ becomes
\begin{equation}
    I^{N,1}(t)' \leq (C_4 |V|_{L^{\infty}_x} + C_1)I^{N,1}(t) + C_4 \big(\Frac{1}{N}|V|_{L^{\infty}_x} + (N-1)^{-\eta} + \sup_{|x|>M_N}|V(x)| \big).
\end{equation}
Then, by applying Gronwall's lemma we find
\begin{equation}
    \sup_{t \leq T_0} I^{N,1}(t) \leq C_4 \left( \Frac{1}{N}|V|_{L^{\infty}_x} + (N-1)^{-\eta} +  \sup_{|x|>M_N}|V(x)|\right) e^{(C_4 |V|_{L^{\infty}_x} + C_1) T_0}
\end{equation}
which goes to $0$ as $N$ goes to infinity, since $M_N$ goes to infinity. This concludes the proof of Theorem \ref{thm:cvgmf}.
\hfill
$\square$

\begin{appendix}

\section{Some technical estimates}
\label{sec:appendix-estF}
In this section, we derive some estimates on the two intermediate functions used in the proofs. Recall that $F_1$ and $F_2$ are defined in the following way:
\begin{equation*}
    F_1(X) = L ^* L -2 \langle L \rangle_{X} L +\langle L \rangle_{X}^2 I_d,
\end{equation*}
and 
\begin{equation*}
    F_2(X) = L - \langle L \rangle_{X}I_d.
\end{equation*}

\begin{proof}[Proof of Lemma \ref{lem:estimateF}]
For the first estimate, note that
\begin{eqnarray*}
|F_1(X)X|_{L^2_x} &\leq& |L^* L X|_{L^2_x} + 2 |\langle L \rangle_{X}| |L X|_{L^2_x} + \langle L \rangle_{X}^2|X| \\
&\leq& 2R \| L \|^2 (1+ 4 R^2)^2.
\end{eqnarray*}
The second estimate needs more careful manipulations. First, decompose the difference in the following way:
\begin{eqnarray*}
|F_1(X)X - F_1(Y)Y|_{L^2_x} &\leq& \| L \|^2 |X-Y|_{L^2_x} + 2 \| L \| |\langle L \rangle_{X}X -\langle L \rangle_{Y}Y|_{L^2_x}\\
&+&|\langle L \rangle_{X}^2X -\langle L \rangle_{Y}^2Y|_{L^2_x} \\
&=& \| L \|^2 |X-Y|_{L^2_x} + 2 \| L \| I + II.
\end{eqnarray*}
$I$ is estimated thanks to Lemma \ref{lem:estimateavg}:
\begin{eqnarray*}
I &\leq& |\langle L \rangle_{X} - \langle L \rangle_{Y}| |X|_{L^2_x} + |\langle L \rangle_{Y}| |X-Y|_{L^2_x} \\
&\leq& 12 R^2 \| L \| |X-Y|_{L^2_x}. 
\end{eqnarray*}
As for $II$, by using once again the inequalities of Lemma \ref{lem:estimateavg}:
\begin{eqnarray*}
II &\leq& |\langle L \rangle_{X}|^2 |X-Y|_{L^2_x}  + |\langle L \rangle_{X}^2 - \langle L \rangle_{Y}^2| |Y|_{L^2_x} \\
&\leq& (4 R^2 \| L \|)^2 |X-Y|_{L^2_x} + |\langle L \rangle_{X} - \langle L \rangle_{Y}| |\langle L \rangle_{X} + \langle L \rangle_{Y}| 2R \\
&\leq& 80 R^4 \| L\|^2 |X-Y|_{L^2_x}.
\end{eqnarray*}
Thus, collecting the results leads to \eqref{eq:estimatef12}
Estimate \eqref{eq:estimatef21} is a direct consequence of the triangular inequality.
Finally, \eqref{eq:estimatef22} comes from a similar decomposition as in the beginning of the proof of \eqref{eq:estimatef12}. More precisely:
\begin{eqnarray*}
|F_2(X) X - F_2(Y)Y|_{L^2_x} &\leq& \| L \| |X-Y|_{L^2_x} + |\langle L \rangle_{X} - \langle L \rangle_{Y}| |X|_{L^2_x} + |\langle L \rangle_{Y}| |X-Y|_{L^2_x} \\
&\leq&  (1 + 12R^2)\| L \| |X-Y|_{L^2_x},
\end{eqnarray*}
which concludes the proof of the lemma.
\end{proof}

With additional assumptions on $L$, we can obtain estimates on $\nabla F_1$ and $\nabla F_2$, which are stated in Lemma \ref{lem:estimFh1}.

\begin{proof}[Proof of Lemma \ref{lem:estimFh1}]
For the estimate \eqref{eq:estimf1h1} start by decomposing in the following way:
\begin{equation*}
\begin{aligned}
|\nabla F_1(X) X|_{L^2_x} &\leq |F_1(X) \nabla X|_{L^2_x} + |[\nabla, F_1(X)] X|_{L^2_x} \\
&= I + II.
\end{aligned}
\end{equation*}
For $I$, by performing exactly the same manipulations as in the proof of the first estimate of Lemma \ref{lem:estimateF}, we get
\begin{equation}\label{eq:estimf1h11}
    I \leq  (1 + 4R^2)^2 \| L\|^2 |\nabla X|_{L^2_x}.
\end{equation}
For $II$, start by noticing that the commutator has the following form 
\begin{equation*}
    [\nabla, F_1(X)] = [\nabla, L^* L] -2 \langle L \rangle_{X} [\nabla, L].
\end{equation*}
Therefore, by Lemma \ref{lem:estimateavg}
\begin{equation}\label{eq:estimf1h12}
    \begin{aligned}
    II \leq (\|[\nabla, L^* L] \|_{\Lc(H^1_x, L^2_x)} + 8 R^2 \| L \| \|[\nabla, L] \|_{\Lc(H^1_x, L^2_x)}) |X|_{H^1_x}
    \end{aligned}.
\end{equation}
Combining \eqref{eq:estimf1h11} and \eqref{eq:estimf1h12} yields estimate \eqref{eq:estimf1h1}.

For the estimate \eqref{eq:estimf2h1}, with the same manipulations, we find the following inequality:
\begin{equation}
    |\nabla F_2(X) X|_{L^2_x} \leq (\| L \| + 4 R^2\| L \| + \|[\nabla, L] \|_{\Lc(H^1_x, L^2_x)}) |X|_{H^1_x},
\end{equation}
which is the desired result.
\end{proof}

\section{A Stochastic Differential Equation}
\label{sec:appendix-SDE}

In this section, we solve the SDE arising from studying the $L^2_x$-norm of \eqref{eq:trunc}.

\begin{lemma}
\label{lem:SDE}
Assume that an almost surely continuous stochastic process $(M_t)_{t\ge 0}$ satisfies the Itô's dynamics
$$\d M_t = C_t(M_t-1) \d W_t,\quad \forall t>0,$$
where $(C_t)_{t\ge 0}$ is adapted with $|C_t| \le C|M_t|$, almost surely, for some deterministic $C>0$, and $(W_t)_{t\ge 0}$ is a real-valued Brownian motion. If $M_0=1$, then $M_t=1$ for all $t\geq 0$, almost surely.  
\end{lemma}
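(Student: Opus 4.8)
The plan is to reduce to a linear stochastic differential equation with zero initial datum and to run an It\^o--Gronwall argument, but only after localizing, since the hypothesis $|C_t|\le C|M_t|$ controls the diffusion coefficient by a quantity that is not a priori bounded.

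Set $N_t:=M_t-1$. Then $N$ is an almost surely continuous solution of
\[
\d N_t = C_t N_t \, \d W_t, \qquad N_0 = 0,
\]
and the goal is to show $N\equiv 0$ almost surely. For $K>1$ introduce the stopping time
\[
\tau_K := \inf\{t\ge 0 : |M_t|\ge K\},
\]
which is a genuine stopping time because $M$ is continuous and adapted, and which is strictly positive since $M_0=1$. On the set $\{s\le\tau_K\}$ one has $|C_s|\le CK$ and $|N_s|\le K+1$, so the stopped integrand $\mathbf{1}_{\{s\le\tau_K\}}C_sN_s$ is bounded and $\bigl(\int_0^{t\wedge\tau_K}C_sN_s\,\d W_s\bigr)_{t\ge0}$ is a true square-integrable martingale. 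Applying It\^o's formula to $x\mapsto x^2$ gives, since $N_0=0$,
\[
N_{t\wedge\tau_K}^2 = 2\int_0^{t\wedge\tau_K} C_s N_s^2 \,\d W_s + \int_0^{t\wedge\tau_K} C_s^2 N_s^2 \,\d s.
\]
Taking expectations annihilates the martingale term, and on $\{s\le\tau_K\}$ we have $C_s^2 N_s^2 \le C^2K^2 N_{s\wedge\tau_K}^2$, so with $u_K(t):=\E[N_{t\wedge\tau_K}^2]\le(K+1)^2$ we obtain
\[
u_K(t) \le C^2K^2\int_0^t u_K(s)\,\d s, \qquad t\ge 0.
\]
Since $u_K$ is bounded (hence locally integrable), Gronwall's lemma forces $u_K\equiv 0$; thus for each $t$, $N_{t\wedge\tau_K}=0$ almost surely, and by path continuity, almost surely $N_{t\wedge\tau_K}=0$ for all $t\ge0$.

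It remains to let $K\to\infty$. Because $M$ is continuous on all of $[0,\infty)$, for almost every $\omega$ and every $T>0$ we have $\sup_{t\le T}|M_t(\omega)|<\infty$, hence $\tau_K(\omega)>T$ for all sufficiently large $K$; in particular $\tau_K\uparrow\infty$ almost surely. Consequently $N_t=0$ for all $t\ge0$ almost surely, i.e. $M_t=1$ for all $t\ge0$. The only delicate point is the localization: the bound $|C_t|\le C|M_t|$ is not a global bound on the coefficient, so one genuinely needs the stopping times $\tau_K$ in order to take expectations, and one needs continuity of $M$ on the whole half-line to be sure they exhaust $[0,\infty)$; modulo that, this is the classical uniqueness computation for a linear stochastic differential equation, and one could equivalently phrase the conclusion via the stochastic exponential started from $N_0=0$.
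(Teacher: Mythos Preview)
Your proof is correct and follows essentially the same route as the paper: localize with the stopping times $\tau_K=\inf\{t\ge0:|M_t|\ge K\}$, derive an $L^2$-type Gronwall inequality for the stopped process, conclude it vanishes, and delocalize using continuity. The only cosmetic difference is that you obtain the $L^2$ estimate by applying It\^o's formula to $x\mapsto x^2$ and taking expectations, whereas the paper applies Doob's inequality together with It\^o's isometry to bound $\E\bigl[\sup_{t\le T}|M^n_t-1|^2\bigr]$ directly; both lead to the same Gronwall step.
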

\begin{proof}
It suffices to adopt the localization argument. Set for $n\ge 1$, 
$$\tau_n:=\inf\{t\ge 0:  |M_t|\ge n \}.$$ 
Then the stopped processes $M^n_t:= M_{t\wedge \tau_n}$, $C^n_t:= C_{t\wedge \tau_n}$ satisfy
$$M^n_t-1 =\int_0^{t\wedge \tau_n} C_u(M_u-1)\d W_u= \int_0^{t\wedge \tau_n} C^n_u(M^n_u-1)\d W_u.$$
Doob's inequality allows to conclude. Namely, for any $T>0$, 
\begin{eqnarray*}
\mathbb E[\sup_{0\leq t\leq T}|M^n_{t}-1|^2] &\leq& 4\mathbb E\left[\int_0^{T\wedge \tau_n} \sup_{0\leq t\leq u}\big(C^n_t(M^n_t-1)\big)^2\d u\right] \\
&\leq& 4C^2(n+1)^2\int_0^{T} \mathbb E[\sup_{0\leq t\leq u}|M^n_{t}-1|^2]\d u.
\end{eqnarray*}
This yields $\sup_{0\leq t\leq T}|M^n_{t}-1|^2=0$ almost surely. Since $M$ is almost-surely continuous, we can complete the localization argument, which concludes the proof.
\end{proof}

\section{Kolokolstov's Technical Lemma}
\label{sec:appendix-techn}

In this section, we give an alternative proof of the technical lemma used in Theorem \ref{thm:cvgmf}, proved first in \cite{kolokoltsov2021law}. In \cite{kolokoltsov2021law}, the proof was based by approximation argument, here we prove the result using only trace inequalities and properties of projectors and positive operators.

In the following, we fix a separable Hilbert space $\Hc$ with an orthonormal basis $(e_j)_{j\in \N}$. We recall the result with the following proposition.

\begin{proposition}\label{prop:lemKol}
Let $L \in \Lc(\Hc)$, $p$ a one-dimensional projector and $\hat{\rho}$ a density operator. Then if $L$ is self-adjoint, there exists a universal constant $C_1 > 0$ such that
\begin{equation}
    \label{eq:lemmKolsadj}
    \begin{aligned}
    &\big|-4 \trace (L \hat{\rho} L p ) + 2\big( \trace(\hat{\rho} L p ) + \trace(L \hat{\rho} p) \big) \big(\trace(L \hat{\rho}) + \trace(Lp) \big)\\
    &- 4 \trace(\hat{\rho} p)\trace(L p)\trace(L\hat{\rho})\big| \leq C_1\| L \|^2 \big(1-\trace(\hat{\rho} p)\big),
    \end{aligned}
\end{equation}
and for a general $L$ there exists a universal constant $C_2 >0$ such that
\begin{equation}
    \label{eq:lemmKolsgene}
    \begin{aligned}
     &\big|-\trace(p L \hat{\rho} L^* + p L^* \hat{\rho} L + p L \hat{\rho} L + p L^* \hat{\rho} L^*)  - \trace(\hat{\rho} p) \trace \big(\hat{\rho}(L+L^*)\big) \trace \big(p (L + L^*)\big) \\
     &+ \trace(p \hat{\rho} L^* + p L \hat{\rho})\trace\big(p(L^* + L)\big) + \trace(p \hat{\rho} L + p L^* \hat{\rho})\trace\big(\hat{\rho}(L^* + L)\big)\big|\\
    &\leq C_2\|L\|^2 \big(1-\trace(\hat{\rho} p)\big).
    \end{aligned}
\end{equation}
\end{proposition}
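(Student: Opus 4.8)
The plan is to reduce the whole computation to manipulations involving only the rank-one projector $p$, the complementary projector $q := I_{\Hc} - p$, and the nonnegative scalar $\eps := 1 - \trace(\hat\rho p) = \trace(q\hat\rho q)$, using the following elementary facts, all consequences of trace inequalities and positivity: (a) $pXp = \trace(Xp)\,p$ for every bounded $X$, so $|\trace(Xp)| \le \|X\|_{\Lc}$ and $\|pXp\|_1 = |\trace(Xp)|$; (b) $q\hat\rho q \ge 0$, hence $\|q\hat\rho q\|_{\Lc} \le \trace(q\hat\rho q) = \eps$; (c) since $0 \le \hat\rho \le I_{\Hc}$ one has $\hat\rho^2 \le \hat\rho$, so $\|q\hat\rho\|_2^2 = \trace(\hat\rho^2 q) \le \trace(\hat\rho q) = \eps$, and likewise $\|\hat\rho q\|_2, \|q\hat\rho p\|_2, \|p\hat\rho q\|_2 \le \sqrt\eps$; (d) the trace Hölder and Cauchy--Schwarz inequalities $|\trace(XY)| \le \|X\|_{\Lc}\|Y\|_1$ and $|\trace(XY)| \le \|X\|_2\|Y\|_2$, together with $\|\hat\rho\|_1 = 1$ and $\|pX\|_2 = \|Xp\|_2 \le \|X\|_{\Lc}$.

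For the self-adjoint case \eqref{eq:lemmKolsadj} I would insert the resolution $I_{\Hc} = p + q$ at every interior slot of each operator product on the left-hand side and use (a) to pull the scalars $\trace(Lp)$, $\trace(L\hat\rho p)$, $\trace(\hat\rho Lp)$, $\trace(L\hat\rho)$ out of the blocks of the form $p\,(\cdot)\,p$. Collecting the contributions in which every inserted resolution equals $p$ --- the ``fully condensed'' part --- a direct algebraic expansion shows that, because of the coefficients $-4,\,+2,\,-4$ and the symmetric pairings $\trace(\hat\rho Lp)+\trace(L\hat\rho p)$ and $\trace(L\hat\rho)+\trace(Lp)$, these terms cancel identically (schematically, replacing $\hat\rho$ by $p$ turns \eqref{eq:lemmKolsadj} into $-4c^2 + 4c(c+c) - 4c^2 = 0$ with $c := \trace(Lp)$). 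Every surviving term then carries at least one factor $q$ adjacent to $\hat\rho$.

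The residual is estimated as follows. Terms containing a block $q\hat\rho q$ (possibly conjugated by $L$ and by rank-one projectors) are bounded at once by $C\|L\|_{\Lc}^2 \eps$ via (b). The terms carrying exactly one factor $q$ next to $\hat\rho$ are, individually, only $O(\|L\|_{\Lc}^2\sqrt\eps)$ by (c)--(d); the key point is that once the condensed part has been removed they reorganise into a sum of \emph{products of two} single-$q$ quantities --- in the self-adjoint case one finds precisely $4\,\Re\,\trace(q\hat\rho p\,L)\cdot\trace\big((\hat\rho - p)L\big) - 4\,\trace(pLq\hat\rho q L)$, each factor of the first term being $O(\|L\|_{\Lc}\sqrt\eps)$ --- so that each such contribution is again $O(\|L\|_{\Lc}^2\eps)$. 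Summing the finitely many contributions gives \eqref{eq:lemmKolsadj} with an explicit universal $C_1$. I expect this reorganisation to be the crux of the proof: a naive term-by-term bound loses a factor $\sqrt\eps$, and the gain comes precisely from the algebraic form of Kolokoltsov's combination, which forces the $O(\sqrt\eps)$ pieces to appear only bilinearly.

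Finally, for a general $L$ in \eqref{eq:lemmKolsgene} I would write $L = A + \i B$ with $A := \tfrac12(L + L^*)$ and $B := \tfrac1{2\i}(L - L^*)$ self-adjoint and $\|A\|_{\Lc} \vee \|B\|_{\Lc} \le \|L\|_{\Lc}$, noting in addition that $L\hat\rho L^* + L^*\hat\rho L + L\hat\rho L + L^*\hat\rho L^* = (L+L^*)\hat\rho(L+L^*)$, so the leading term of \eqref{eq:lemmKolsgene} is exactly $-\trace\big(p(L+L^*)\hat\rho(L+L^*)\big)$. Expanding the left-hand side of \eqref{eq:lemmKolsgene} by (conjugate-)bilinearity of the traces, every resulting monomial is either of a form already treated in the self-adjoint analysis or is controlled by the identical insert-$(p+q)$/cancel/Cauchy--Schwarz bookkeeping, and collecting all of them yields \eqref{eq:lemmKolsgene} with a universal $C_2$ depending only on the (absolute and finite) number of monomials produced.
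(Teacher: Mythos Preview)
Your proposal is correct and follows essentially the same route as the paper: insert $I=p+q$, observe that the ``all-$p$'' contributions cancel, bound the $q\hat\rho q$ block directly by $\eps$, and show that the single-$q$ terms pair up bilinearly into $O(\|L\|^2\eps)$, then pass to general $L$ via the self-adjoint/skew-adjoint splitting. The only cosmetic differences are that the paper packages the self-adjoint case into an explicit $-4I+2II$ identity (with $I=\trace(pLq\hat\rho qL)$ and $II$ a product of two $\sqrt\eps$-size factors), invokes the Pickl bound $\|\hat\rho-p\|_1\le 2\sqrt2\sqrt\eps$ for one factor rather than your $\hat\rho^2\le\hat\rho$ route, and in the general case isolates a single cross term $\trace(p[\hat\rho,L^a])\big(\trace(\hat\rho L^s)-\trace(pL^s)\big)$ instead of expanding bilinearly.
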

Before proving this proposition, we start by stating two general results on operators that will play a key role in the proof.

\begin{lemma}\label{lem:prodproj}
Let $p$ be a one-dimensional projector and $A,B$ two bounded operators on $\Hc$, then
\begin{equation*}
    \trace{A p B p} = \trace{A p} \: \trace{B p}.
\end{equation*}
\end{lemma}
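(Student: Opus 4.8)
The plan is to exploit that $p$ is rank one. Write $p = (\psi,\cdot)_{\Hc}\,\psi$ for some unit vector $\psi\in\Hc$, where $(\cdot,\cdot)_{\Hc}$ denotes the scalar product of $\Hc$. The key step I would carry out first is to show that for any bounded operator $B$ the ``sandwiched'' operator $pBp$ is a scalar multiple of $p$, namely $pBp = \trace(Bp)\,p$.

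To establish this, note that $\mathrm{Ran}(pBp)\subseteq\mathrm{Ran}(p)$, which is one-dimensional, and that $pBp$ vanishes on $\ker p = \mathrm{Ran}(p)^{\perp}$; the orthogonal projector $p$ itself has exactly the same block structure (identity on $\mathrm{Ran}(p)$, zero on $\mathrm{Ran}(p)^{\perp}$). Hence there is a scalar $\lambda$ with $pBp = \lambda p$. Taking the trace, using $\trace p = 1$ and the cyclicity of the trace (legitimate since $p$ is finite rank, hence trace class, while $B$ is bounded), yields $\lambda = \lambda\,\trace p = \trace(pBp) = \trace(p B p) = \trace(B p\,p) = \trace(Bp)$. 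Alternatively, one can argue by a one-line direct computation: for every $x\in\Hc$ one has $pBp\,x = (\psi,x)_{\Hc}(\psi,B\psi)_{\Hc}\,\psi = (\psi,B\psi)_{\Hc}\,px$, and $(\psi,B\psi)_{\Hc} = \trace(Bp)$ by expanding the trace in an orthonormal basis and using Parseval's identity.

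The conclusion of the lemma is then immediate: since $pBp = \trace(Bp)\,p$, the operator $ApBp = \trace(Bp)\,Ap$ is trace class, and by linearity of the trace $\trace(ApBp) = \trace(Bp)\,\trace(Ap)$, which is the desired equality. There is no real obstacle in this argument; the only point deserving a word of care is the well-definedness of the traces involved and the use of cyclicity, both of which follow at once from $p$ being finite rank.
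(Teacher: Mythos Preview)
Your proof is correct and follows essentially the same approach as the paper: both write $p=(\psi,\cdot)\psi$ for a unit vector and exploit the rank-one structure. The only difference is organizational—the paper first establishes $\trace(Ap)=(A\psi,\psi)_{\Hc}$ and then applies it to $\trace(ApBp)$ directly, whereas you first prove the operator identity $pBp=\trace(Bp)\,p$ and then take traces; these are two phrasings of the same computation.
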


\begin{proof}
Since $p$ is a one-dimensional projector, there exists $\phi \in \Hc$ with $|\phi|_{\Hc} = 1$ such that 
\begin{equation*}
    p = (\phi, \cdot) \phi.
\end{equation*}
First, let us show that $\trace{Ap} = (A\phi, \phi)$. By definition of the trace, we have
\begin{equation*}
    \trace{Ap} = \sum_{j=1}^{\infty} (A p e_j, e_j). 
\end{equation*}
Thus, by definition of the projector $p$
\begin{equation*}
    \sum_{j=1}^{\infty} (A p e_j, e_j) = \sum_{j=1}^{\infty} (A \phi, e_j) (\phi, e_j) = (A \phi, \sum_{j=1}^{\infty} (\phi, e_j) e_j).
\end{equation*}
Hence the result follows. Second, to show the lemma we use twice the previous equality
\begin{eqnarray*}
\trace{A p B p} &=& (A p B \phi, \phi) \\
&=& (A (B \phi, \phi) \phi, \phi) \\
&=& (A \phi, \phi)(B \phi, \phi) = \trace{A p} \: \trace{B p}.
\end{eqnarray*}
\end{proof}

\begin{lemma}\label{lem:estimGamA}
Let $\hat{\rho} \in \Lc^1(\Hc)$ be a positive symmetric operator and $A \in \Lc(\Hc)$, then
\begin{equation}
    \| \hat{\rho} A \|^2_2 \leq \trace(\hat{\rho} ) \trace(A^* \hat{\rho} A)
\end{equation}
\end{lemma}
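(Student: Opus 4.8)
The plan is to exploit the cyclicity of the trace together with the fact that $\hat\rho$ is positive and symmetric, so that it possesses a (positive symmetric) square root $\hat\rho^{1/2}\in\Lc^2(\Hc)$ with $\hat\rho=\hat\rho^{1/2}\hat\rho^{1/2}$ and $\trace(\hat\rho)=\|\hat\rho^{1/2}\|_2^2$. First I would rewrite the Hilbert--Schmidt norm on the left-hand side as
\begin{equation*}
\|\hat\rho A\|_2^2=\trace\big(A^*\hat\rho^*\hat\rho A\big)=\trace\big(A^*\hat\rho\,\hat\rho A\big)=\trace\big(A^*\hat\rho^{1/2}\cdot\hat\rho^{1/2}\hat\rho^{1/2}\cdot\hat\rho^{1/2}A\big),
\end{equation*}
using $\hat\rho^*=\hat\rho$. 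Setting $B:=\hat\rho^{1/2}A\in\Lc^2(\Hc)$ (it is Hilbert--Schmidt since $\hat\rho^{1/2}$ is and $A$ is bounded), the quantity becomes $\trace\big(B^*\hat\rho B\big)$.

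Next I would bound $\trace\big(B^*\hat\rho B\big)$. Since $\hat\rho$ is positive with $\|\hat\rho\|\le\trace(\hat\rho)$, the operator $\trace(\hat\rho)\,I-\hat\rho$ is positive, hence $\trace\big(B^*(\trace(\hat\rho)I-\hat\rho)B\big)\ge 0$, which gives
\begin{equation*}
\trace\big(B^*\hat\rho B\big)\le\trace(\hat\rho)\,\trace\big(B^*B\big)=\trace(\hat\rho)\,\|B\|_2^2=\trace(\hat\rho)\,\|\hat\rho^{1/2}A\|_2^2.
\end{equation*}
Finally, $\|\hat\rho^{1/2}A\|_2^2=\trace\big(A^*\hat\rho^{1/2}\hat\rho^{1/2}A\big)=\trace\big(A^*\hat\rho A\big)$, again by cyclicity and symmetry of $\hat\rho^{1/2}$. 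Combining the two displays yields $\|\hat\rho A\|_2^2\le\trace(\hat\rho)\,\trace(A^*\hat\rho A)$, as claimed. (Alternatively, one can phrase the middle step directly as a Cauchy--Schwarz inequality for the sesquilinear form $(X,Y)\mapsto\trace(X^*\hat\rho Y)$ applied to $X=B$ and to the identity, but the positivity argument above is the cleanest.)

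The only point requiring a little care—and the main potential obstacle—is the justification of the manipulations at the level of trace-class and Hilbert--Schmidt operators: one must check that $\hat\rho A$, $\hat\rho^{1/2}A$ and $A^*\hat\rho A$ are indeed in the appropriate ideals so that all traces are finite and cyclicity applies. This is standard: $\hat\rho^{1/2}\in\Lc^2(\Hc)$ because $\trace(\hat\rho)<\infty$, products of a Hilbert--Schmidt operator with a bounded operator remain Hilbert--Schmidt, and the product of two Hilbert--Schmidt operators is trace class with the usual cyclicity of the trace holding; see e.g. \cite{kukush2019}. With these facts in hand the argument is immediate.
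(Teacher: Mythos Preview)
Your proof is correct, but it takes a different route from the paper's. The paper expands $\|\hat\rho A\|_2^2$ directly in an orthonormal basis $(e_j)$ as $\sum_{j,k}(\hat\rho A e_j,e_k)^2$ and then applies the Cauchy--Schwarz inequality for the positive sesquilinear form $(x,y)\mapsto(\hat\rho x,y)$ on vectors, namely $(\hat\rho A e_j,e_k)^2\le(\hat\rho A e_j,Ae_j)(\hat\rho e_k,e_k)$; summing over $j,k$ gives the result in one line. Your argument instead factors $\hat\rho=\hat\rho^{1/2}\hat\rho^{1/2}$, sets $B=\hat\rho^{1/2}A$, and uses the operator inequality $\hat\rho\le\trace(\hat\rho)\,I$ to bound $\trace(B^*\hat\rho B)$. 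Both are short and rigorous; the paper's approach is slightly more elementary in that it avoids square roots and any discussion of ideals, while yours is coordinate-free and highlights the operator-inequality structure. (Your parenthetical ``alternative'' of applying Cauchy--Schwarz for $(X,Y)\mapsto\trace(X^*\hat\rho Y)$ with $Y=I$ would need care in infinite dimensions since the identity is not Hilbert--Schmidt; the version you actually wrote out is the clean one.)
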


\begin{proof}
By definition of the $\Lc^2$-norm and Parseval's Theorem
\begin{equation*}
    \begin{aligned}
     \| \hat{\rho} A \|^2_2 &= \sum_{j=1}^{\infty}(\hat{\rho} A e_j, \hat{\rho} A e_j )\\
     &= \sum_{j,k=1}^{\infty}(\hat{\rho} A e_j, e_k)^2.
    \end{aligned}
\end{equation*}
Since $\hat{\rho}$ is a positive symmetric operator, Cauchy-Schwartz inequality implies
\begin{equation*}
    (\hat{\rho} A e_j, e_k)^2 \leq (\hat{\rho} A e_j, A e_j) (\hat{\rho} e_k, e_k).
\end{equation*}
Therefore
\begin{equation*}
    \begin{aligned}
     \| \hat{\rho} A \|^2_2 &\leq \sum_{k=1}^{\infty} (\hat{\rho} e_k, e_k) \sum_{j=1}^{\infty} (\hat{\rho} A e_j, A e_j)
     \\&= \trace(\hat{\rho}) \trace(A^* \hat{\rho} A),
    \end{aligned}
\end{equation*}
as required.
\end{proof}

\begin{proof}[Proof of Proposition \ref{prop:lemKol}]
To lighten the computations, we will note
\begin{equation*}
    \alpha := \trace((I-p)\hat{\rho}) = 1 - \trace(\hat{\rho} p).
\end{equation*}

To begin with, we restrict ourselves to the case where $L$ self-adjoint. By rewritting the left hand side of \eqref{eq:lemmKolsadj}, we find
\begin{eqnarray*}
&&-4 \trace (L \hat{\rho} L p ) + 2( \trace(\hat{\rho} L p ) + \trace(L \hat{\rho} p) ) (\trace(L \hat{\rho}) + \trace(Lp) ) - 4 \trace(\hat{\rho} p)\trace(L p)\trace(L\hat{\rho}) \\
   &=& -4 \trace (L \hat{\rho} L p ) + 2( \trace(\hat{\rho} L p ) + \trace(L \hat{\rho} p) )\trace(L p) \\
   &&+ 2(\trace(\hat{\rho} L p ) + \trace(L \hat{\rho} p) - 2 \trace(\hat{\rho} p)\trace(L p)) (\trace(L \hat{\rho}) - \trace(L p) + \trace(Lp))\\
    &=& - 4 [\trace (L \hat{\rho} L p ) - \trace(L \hat{\rho} p )\trace(L p) - \trace(\hat{\rho} L p )\trace(L p) + \trace(\hat{\rho} p)\trace(L p)^2] \\
    &&+ 2[\trace(\hat{\rho} L p ) + \trace(L \hat{\rho} p)- 2 \trace(\hat{\rho} p)\trace(L p)][\trace(L \hat{\rho}) - \trace(L p)]\\
    &=& -4 I + 2 II.
\end{eqnarray*}
For $I$, notice that, thanks to Lemma \ref{lem:prodproj} 
\begin{equation*}
    I = \trace( L (I-p) \hat{\rho} (I-p) L p).
\end{equation*}
Thus, by classic inequalities on the norms we have
\begin{equation*}
    \begin{aligned}
    |I| &\leq \| L p \|_2 \| L (I-p) \hat{\rho} (I-p)\|_2 \\
    &\leq \| L p \|_2 \| L \| \|(I-p) \hat{\rho} (I-p)\|_2 \\
    &\leq \| L p \|_2 \| L \| \|(I-p) \hat{\rho} (I-p)\|_1,
    \end{aligned}
\end{equation*}
where we used $\| \cdot \|_2 \leq \| \cdot \|_1$. Since $\hat{\rho}$ is a positive operator and $I-p$ is self-adjoint, it implies $(I-p) \hat{\rho} (I-p)$ is also a positive operator. Therefore, using $(I-p)^2 = I-p$, we obtain
\begin{equation*}
    \|(I-p) \hat{\rho} (I-p)\|_1 = \trace ((I-p) \hat{\rho}) = \alpha.
\end{equation*}
Moreover, since $p$ is a projector on an element with norm one, $\|p\|_2 = 1$. Thus
\begin{equation*}
    \| L p \|_2 \leq \| L \| \| p \|_2 = \| L \|.
\end{equation*}
Collecting the above inequalities, we find
\begin{equation}
    \label{eq:lemKolestimI}
    |I| \leq \| L \|^2 \alpha.
\end{equation}
For $II$, recall first the following bound, given by Lemma $2.3$ in \cite{knowles2010mean}
\begin{equation}
    \label{eq:picklbound}
    \| \hat{\rho} - p \|_1 \leq 2 \sqrt{2} \sqrt{\alpha}.
\end{equation}
Notice that $II$ can be rewritten as
\begin{equation*}
   \begin{aligned}
    II &= [\trace(L \hat{\rho} p) + \trace( \hat{\rho} L p) - 2 \trace(L p \hat{\rho} p)] [\trace(L(\hat{\rho} - p))]\\
    &=[\trace(p L (\hat{\rho} - \hat{\rho} p)) + \trace( (\hat{\rho} - \hat{\rho} p) L p)]\trace(L(\hat{\rho} - p)),
   \end{aligned}
\end{equation*}
where we used Lemma \ref{lem:prodproj} and some commutations properties in the trace. Then, we have by Lemma \ref{lem:estimGamA}
\begin{equation*}
    \begin{aligned}
     |\trace(p L (\hat{\rho} - \hat{\rho} p))| &\leq \|p L \|_2 \| \hat{\rho} (I - p) \|_2\\
     &\leq \| L \| \big(\trace \hat{\rho} \: \trace((I-p) \hat{\rho} (I-p)) \big)^{1/2}\\
     &\leq \| L \| \sqrt{\alpha},
    \end{aligned}
\end{equation*}
since $\hat{\rho}$ is a density operator. The same bound can be obtained for the second term in the right hand side in the previous expression of $II$. Moreover, by \eqref{eq:picklbound}
\begin{equation*}
     |\trace(L(\hat{\rho} - p)) | \leq \| L \| \| \hat{\rho} - p \|_1 \leq 2 \sqrt{2} \| L \| \sqrt{\alpha}. 
\end{equation*}
Hence, combining the two previous inequalities, we find
\begin{equation}
    \label{eq:lemKolestimII}
    |II| \leq (2 \|L\|\sqrt{\alpha})(2 \sqrt{2}\|L \| \sqrt{\alpha}) = 4\sqrt{2} \|L\|^2 \alpha.
\end{equation}
Collecting \eqref{eq:lemKolestimI} and \eqref{eq:lemKolestimII}, it holds that there exists $C_1 >0$ a universal constant such that
\begin{equation}
    |-4 I + 2 II| \leq C_1 \|L\|^2 \alpha,
\end{equation}
which is the desired inequality if $L$ is self-adjoint.

In the case of a general operator $L$, we decompose $L$ as
\begin{equation*}
    L = L^s + L^a,
\end{equation*}
with $L^s = (L + L^*)/2$ a self-adjoint operator and $L^a = (L - L^*)/2$, skew-adjoint. This enables us to rewrite the part inside the absolute value on the left hand side of \eqref{eq:lemmKolsgene} in the following way
\begin{eqnarray*}
&&-\trace(p L \hat{\rho} L^* + p L^* \hat{\rho} L + p L \hat{\rho} L + p L^* \hat{\rho} L^*) - \trace(\hat{\rho} p) \trace\big(\hat{\rho}(L+L^*)\big) \trace\big(p (L + L^*)\big)\\
     &&+ \trace(p \hat{\rho} L + p L^* \hat{\rho})\trace\big(\hat{\rho}(L^* + L)\big) + \trace(p \hat{\rho} L^* + p L \hat{\rho})\trace\big(p(L^* + L)\big)\\
     &=&2 \trace\big(p[\hat{\rho}, L^a]\big) \big(\trace(\hat{\rho} L^s) - \trace(p L^s)\big)- 4 \trace(\hat{\rho} p)\trace(L^s p)\trace(L^s\hat{\rho})\\
     &&  -4 \trace (L^s \hat{\rho} L^s p ) + 2\big( \trace(\hat{\rho} L^s p ) + \trace(L^s \hat{\rho} p) \big) \big(\trace(L^s \hat{\rho}) + \trace(L^s p) \big)  \\
     &=& 2II + I.
\end{eqnarray*}
Note that $I$ corresponds to the left hand side of \eqref{eq:lemmKolsadj} with $L$ replaced with $L^s$. Thus,
\begin{equation}
    \label{eq:boundIkolgene}
    |I| \leq C_1 \| L^s \| \alpha \leq C_1 \| L \| \alpha.
\end{equation}
For $II$, recall that by \eqref{eq:picklbound}
\begin{equation}
    \label{eq:boundII1kolgene}
    |\trace(\hat{\rho} L^s) - \trace(p L^s)| \leq 2 \sqrt{2} \| L \| \sqrt{\alpha}.
\end{equation}
Moreover, by using some basic computations
\begin{equation*}
    \begin{aligned}
     |\trace(p[\hat{\rho}, L^a])| &= |\trace(p \hat{\rho} L^a) - \trace(\hat{\rho} p L^a p) + \trace(\hat{\rho} p L^a p) - \trace(\hat{\rho} p L^a)| \\
     &\leq |\trace(\hat{\rho}(I -p)L^a p)| + |\trace((I -p) \hat{\rho} p L^a)| \\
     &\leq \| L^a p \|_2(\| \hat{\rho} (I-p) \|_2 + \| (I-p) \hat{\rho} \|_2).
    \end{aligned}
\end{equation*}
Thus, applying Lemma \ref{lem:estimGamA}, we find
\begin{equation}
    \label{eq:boundII2kolgene}
    |\trace(p[\hat{\rho}, L^a])| \leq 2 \| L \| \sqrt{\alpha}.
\end{equation}
Finally, collecting \eqref{eq:boundIkolgene}-\eqref{eq:boundII2kolgene}, we find that 
\begin{equation*}
    |I + 2II| \leq(C_1 + 4)\|L\|^2 \alpha,
\end{equation*}
as required.
\end{proof}

\begin{remark} It appears that, in an infinite dimensional setting, the condition of $p$ being a rank one projector is a critical assumption in the proof. The proof given in \cite{chalal2023mean} for the finite dimensional setting uses the equivalence of norms, which does not hold in infinite dimension.

\end{remark}

\section{Technical bounds for the proof of Theorem \ref{thm:cvgmf}}

This section is devoted to the proof of two technical estimates needed in the proof of Theorem \ref{thm:cvgmf}. We recall that the projectors $p_j$ are defined by \eqref{eq:defpj} and $\rho^N_t$ by \eqref{eq:defrhon}. 

\subsection{Estimate of the martingale term}
\label{app:boundP3}

The main objective of this subsection is to prove the lemma used to bound the martingale part in the proof of Theorem \ref{thm:cvgmf}. Recall that we denote by $P^{(3,l)}$ the term inside the stochastic integral with respect to the Brownian motion $B^l$ in the expression \eqref{eq:itohatI}. In the proof, we will use various well-known trace norm inequalities, the interested reader can find them and their proof in Section $3.1.3.$ in \cite{kukush2019}.

\begin{proof}[Proof of Lemma \ref{lem:boundmartP3}]
Almost surely, the trace norm of the operators $(p_{1,t})_{j \leq N}$ and $\rho^N_t$ is preserved equal to one for all time $t \leq T_0$. We therefore fix $\omega$ in the set of measure $1$ where all these norms are preserved. We only give the full expression of $P^{(3,l)}$ for $ l \geq 2$. For $t \leq T_0$
\begin{equation}
    \label{eq:expP3l}
    P_t^{(3,l)} = \trace\big(p_{1,t}^N L_l \rho^N_t + p_{1,t}^N \rho^N_t L_l^* - \trace\big( (L_l + L_l^*)\rho^N_t\big)p_{1,t}^N\rho^N_t \big).
\end{equation}
Hence, by the triangular inequality
\begin{equation*}
    \begin{aligned}
     |P_t^{(3,l)}| &\leq |\trace(p_{1,t}^N L_l \rho^N_t)| + |\trace(p_{1,t}^N \rho^N_t L_l^*)| + |\trace\big( (L_l + L_l^*)\rho^N_t\big) \: \trace (p_{1,t}^N\rho^N_t)| \\
     &\leq \| p_{1,t}^N L_l \rho^N_t \|_1 + \| p_{1,t}^N \rho^N_t L_l^* \|_1 + \| (L_l + L_l^*)\rho^N_t \|_1 \| p_{1,t}^N\rho^N_t \|_1\\
     &= I + II + III.
    \end{aligned}
\end{equation*}
For $I$, notice that by trace and operator norm inequalities
\begin{equation}
    \label{eq:martboundtraceI}
    I \leq \| p_{1,t}^N \| \|L \rho^N_t \|_1 \leq \| L \| \| p_{1,t}^N \| \| \rho^N_t \|_1 \leq \| L \|,
\end{equation}
where we used that the operator norm of $p_{1,t}^N$ and the trace norm of $\rho^N_t$ are equal to one. Similarly for $II$, we find
\begin{equation}
        \label{eq:martboundtraceII}
    II \leq \| L \|.
\end{equation}
For $III$, we obtain
\begin{equation}
\label{eq:martboundtraceIII}
    III \leq 2 \| L \| \| \rho^N_t \|_1 \| p_{1,t}^N \| \| \rho^N_t\|_1 \leq 2 \| L \|.
\end{equation}
Thus, collecting \eqref{eq:martboundtraceI}-\eqref{eq:martboundtraceIII} and taking the supremum in time yields
\begin{equation*}
    \sup_{t \leq T_0} |P_t^{(3,l)}| \leq 4 \| L \|,
\end{equation*}
as required. For $l=1$, $P^{(3,1)}$ is the sum of a term of the form \eqref{eq:expP3l} and another similar term where $p_{1,t}^N$ and $\rho^N_t$ are exchanged. Therefore, a similar bound can be obtained with exactly the same algebraic manipulations.
\end{proof}

\subsection{Bound on $B^2_j$}
\label{app:boundKol}

In this subsection, we briefly recall the main arguments of the proof for the bound of $B^2_j$. This is the term that needs more subtle control, and for which new tools have been introduced first in \cite{pickl2011simple, knowles2010mean}. The result for the stochastic setting, obtained in the proof of Proposition A$.2.$ in \cite{kolokoltsov2025quantum}, is stated in the following lemma.
\begin{lemma}\label{lem:boundB2}
With the set of assumptions of Theorem \ref{thm:cvgmf}, for all $t \leq T$
\begin{equation*}
    \E B^2_j \leq 2 |V|_{L^{\infty}_x}(I^{N,1}_t + \Frac{1}{N}),
\end{equation*}
where $B^2_j$ is defined in \eqref{eq:defbj12} and $\hat{I}^{N,1}$ by \eqref{eq:defhatI}.
\end{lemma}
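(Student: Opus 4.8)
The plan is to isolate $B^2_j$ as the only genuinely delicate term in \eqref{eq:defbj12}, and to treat it by the stochastic version of Pickl's counting argument, exploiting that the fluctuation $W_{1j}:=V_{1,j}-V^{|\phi^{MF,j}_t|^2}_1$ has vanishing average in the $j$-th slot. First I would record the purely algebraic facts: $W_{1j}$ is self-adjoint and bounded with $\|W_{1j}\|\le 2|V|_{L^{\infty}_x}$, it acts only on the variables $(x_1,x_j)$, and since $V^{|\phi^{MF,j}_t|^2}_1$ is a multiplication operator in $x_1$ it commutes with $p^N_{j,t}$ and $q^N_{j,t}$; combined with $p^N_{j,t}q^N_{j,t}=0$ and the identity $p^N_{j,t}V_{1,j}p^N_{j,t}=V^{|\phi^{MF,j}_t|^2}_1p^N_{j,t}$ (the same one used to show $A_j=0$), this gives $p^N_{j,t}W_{1j}p^N_{j,t}=0$ and $p^N_{j,t}W_{1j}q^N_{j,t}=p^N_{j,t}V_{1,j}q^N_{j,t}=:\widetilde V_{j,t}$, so that $B^2_j=|(\Psi^N_t,p^N_{1,t}\widetilde V_{j,t}q^N_{1,t}\Psi^N_t)_{L^2_x}|$ with $\|p^N_{j,t}W_{1j}\|\le 2|V|_{L^{\infty}_x}$.

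Next, since $H_N$ is symmetric and $(B^k)_{k\ge 2}$ are i.i.d., the joint law of $\bigl(\Psi^N,(\phi^{MF,k})_k\bigr)$ is invariant under permutations of $\{2,\dots,N\}$, so $\E B^2_j$ does not depend on $j\in\{2,\dots,N\}$ and it suffices to bound $\frac1{N-1}\sum_{k=2}^N\E B^2_k$. Pointwise in $\omega$, with $\varepsilon_k\in\{-1,1\}$ chosen so that $B^2_k=\varepsilon_k(\Psi^N_t,p^N_{1,t}\widetilde V_{k,t}q^N_{1,t}\Psi^N_t)_{L^2_x}$ and using $|p^N_{1,t}\Psi^N_t|_{L^2_x}\le 1$, one gets $\sum_{k=2}^N B^2_k\le\big|\sum_{k=2}^N\varepsilon_k\widetilde V_{k,t}q^N_{1,t}\Psi^N_t\big|_{L^2_x}$, so by Jensen's inequality it remains to control $\E\sum_{k,k'=2}^N\varepsilon_k\varepsilon_{k'}(\widetilde V_{k,t}q^N_{1,t}\Psi^N_t,\widetilde V_{k',t}q^N_{1,t}\Psi^N_t)_{L^2_x}$. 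The diagonal part $k=k'$ is handled using $|\widetilde V_{k,t}q^N_{1,t}\Psi^N_t|_{L^2_x}\le 2|V|_{L^{\infty}_x}|q^N_{k,t}q^N_{1,t}\Psi^N_t|_{L^2_x}$ and $|q^N_{1,t}\Psi^N_t|^2_{L^2_x}=\hat I^{N,1}_t$, giving $4|V|^2_{L^{\infty}_x}\,\E\bigl(q^N_{1,t}\Psi^N_t,(\sum_{k=2}^N q^N_{k,t})q^N_{1,t}\Psi^N_t\bigr)_{L^2_x}\le 4|V|^2_{L^{\infty}_x}\,N\,I^{N,1}(t)$; dividing by $(N-1)^2$, taking the square root and using $2\sqrt{xy}\le x+y$ is exactly what produces the residual $\tfrac1N$.

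The off-diagonal terms $k\ne k'$ are the core of the argument. Commuting the operators that act on disjoint particle slots and using $q^N_{k,t}\widetilde V_{k,t}=0$ together with the mean-zero identity of the first step, each cross-term can be rewritten as an expression that is effectively quadratic in $q^N_{1,t}\Psi^N_t$; the subtlety is that a crude Cauchy--Schwarz at this stage only delivers a bound of order $|V|_{L^{\infty}_x}\sqrt{I^{N,1}(t)}$, which is too weak to be closed by Gronwall in the proof of Theorem \ref{thm:cvgmf}. Recovering the linear dependence on $I^{N,1}(t)$ (plus the $\tfrac1N$) is precisely where one inserts Pickl's weighted counting operators $\hat m(\hat{\mathcal N})$, $\hat{\mathcal N}:=\sum_{k}q^N_{k,t}$, and their shift identities under conjugation by $q^N_{k,t}q^N_{k',t}$; this is carried out, in the stochastic setting and for the present non-exchangeable family of reference states, in the proof of \cite[Proposition~A.2]{kolokoltsov2025quantum}, whose estimate I would invoke to bound the normalized off-diagonal contribution (after dividing by $(N-1)^2$ and taking the square root) by $C|V|_{L^{\infty}_x}\bigl(I^{N,1}(t)+\tfrac1N\bigr)$. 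Adding this to the diagonal bound and using $\E B^2_j=\frac1{N-1}\sum_{k=2}^N\E B^2_k$ yields $\E B^2_j\le 2|V|_{L^{\infty}_x}\bigl(I^{N,1}(t)+\tfrac1N\bigr)$. The main obstacle is exactly this off-diagonal estimate, whose only known proof goes through the Pickl combinatorics adapted to the non-exchangeable case; everything else is bookkeeping with the trace- and operator-norm inequalities already used repeatedly above.
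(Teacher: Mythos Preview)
Your opening algebraic observations are correct and match the paper: the mean-field potential $V^{|\phi^{MF,j}_t|^2}_1$ commutes with $p^N_{j,t},q^N_{j,t}$ and drops out, leaving $B^2_j=|(\Psi^N_t,p^N_{1,t}p^N_{j,t}V_{1,j}q^N_{j,t}q^N_{1,t}\Psi^N_t)|$. But from there the paper does \emph{not} sum over $j$ and expand a square; it treats a single fixed $j$. The counting operator $\widehat{m_N}^{1/2}\widehat{m_N}^{-1/2}$ is inserted between $V_{1,j}$ and $q^N_{j,t}q^N_{1,t}$, the shift identity (Lemma~\ref{lem:permtrick}) moves $\widehat{m_N}^{1/2}$ across $V_{1,j}$ to become $\widehat{\tau_2 m_N}^{1/2}$ on the $p^N_{1,t}p^N_{j,t}$ side, and then a single Cauchy--Schwarz splits the expression into $\alpha^{1/2}\beta^{1/2}$ with $\alpha\le|V|_{L^\infty_x}^2(I^{N,1}(t)+\tfrac{2}{N})$ (via $\widehat{\tau_2 m_N}=\widehat{m_N}+\tfrac{2}{N}$) and $\beta\le I^{N,1}(t)$. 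Young's inequality finishes. No averaging over $j$, no diagonal/off-diagonal decomposition.

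Your detour through $\frac{1}{N-1}\sum_k\E B^2_k$ and the expansion of $\big|\sum_k\varepsilon_k\widetilde V_{k,t}q^N_{1,t}\Psi^N_t\big|^2$ creates a genuine gap. The diagonal part is indeed harmless, but the off-diagonal sum $\sum_{k\ne k'}\varepsilon_k\varepsilon_{k'}(q^N_{1,t}\Psi^N_t,q^N_{k,t}V_{1,k}p^N_{k,t}p^N_{k',t}V_{1,k'}q^N_{k',t}q^N_{1,t}\Psi^N_t)$ is not controlled by the result you cite: Proposition~A.2 of \cite{kolokoltsov2022quantum} bounds a \emph{single} $B^2_j$ via exactly the one-step counting-operator insertion sketched above, not a bilinear off-diagonal expression with $\omega$-dependent signs $\varepsilon_k$. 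A crude Cauchy--Schwarz on each cross term gives at best $|V|_{L^\infty_x}^2|q^N_{k,t}q^N_{1,t}\Psi^N_t|\,|q^N_{k',t}q^N_{1,t}\Psi^N_t|$, and summing over $k\ne k'$ yields $O(N^2 I^{N,1}(t))$ after expectation, hence $\sqrt{I^{N,1}(t)}$ after normalization---the very bound you flagged as too weak. Recovering the linear dependence would require reinjecting the counting-operator machinery \emph{inside} the off-diagonal sum, at which point you are re-deriving the paper's estimate in a strictly more complicated setting. The clean route is to abandon the averaging and apply Lemma~\ref{lem:permtrick} directly to the single matrix element $(\Psi^N_t,p^N_{1,t}p^N_{j,t}V_{1,j}q^N_{j,t}q^N_{1,t}\Psi^N_t)$.
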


In the following, for the sake of simplicity, we omit the index $t$ of all the projectors. First, define the following operators for $k \leq N$:
\begin{equation*}
    P_k^N = \sum_{a\in \{0,1\}^{N}, \sum a_j = k } \Prod_{j=1}^N (p_j^N)^{1-a_j} (q_j^N)^{a_j},
\end{equation*}
where $q_j$ is the orthogonal projector of $p_j$ defined by \eqref{eq:defqj}. Then define the average operator of the projectors $q_j^N$:
\begin{equation*}
    M_N := \frac{1}{N}\sum_{j=1}^N q_j^N.
\end{equation*}
The first point is to note the following relationship between $M_N$ and the projectors $P_k$ (see for instance relation $(3.11)$ in \cite{knowles2010mean}):
\begin{equation*}
    M_N = \sum_{k=1}^N\frac{k}{N}P_k^N.
\end{equation*}
Next, for a function $f : \{0, \cdots, N\} \mapsto \C $, define the following operator
\begin{equation*}
    \hat{f} := \sum_{k=1}^N f(k) P_k.
\end{equation*}
From this definition, we can rewrite $M_N$ as
\begin{equation*}
    M_N = \widehat{m_N},
\end{equation*}
where $m_N$ is defined by $m_N(k) := k/N$, for $k \leq N$. One of the main argument is that we can define a pseudo-power of $\widehat{m_N}$ by
\begin{equation*}
    \widehat{m_N}^{\alpha} = \sum_{k=1}^N (\frac{k}{N})^{\alpha}P_k,
\end{equation*}
for $\alpha$ a number. Then, one can show that, for all $j \leq N$ and $\alpha \neq 0$,
\begin{equation*}
    \widehat{m_N}^{\alpha}\widehat{m_N}^{-\alpha}q_j = q_j,
\end{equation*}
where the proof can be found in the proof of Proposition A$.2.$ in \cite{kolokoltsov2022quantum}. The usefulness of these notations appears in a key result, stated in the following lemma, that allows to do some pseudo-permutation tricks in the proof:

\begin{lemma}\label{lem:permtrick}
Let $Q_r = \#_1 \cdots \#_r$, where $\#_i$ is either $p_i$ or $q_i$. Then we have, for all $k, r \leq N$
\begin{equation*}
     Q_r P_k =P_k Q_r.
\end{equation*}
Moreover, let $Q^1_r$ and $Q^2_r$ be two such operators with respectively $n_1$ and $n_2$ factors $q$ and $A_r \in \Lc(L^2(\R^3)^{\otimes r})$. Then
\begin{equation*}
    Q^1_r A_r \hat{f} Q^2_r = Q^1_r \widehat{\tau_n f} A_r Q^2_r
\end{equation*}
where $\tau_n f(k) = f(k+n)$ and $n=n_2-n_1$.
\end{lemma}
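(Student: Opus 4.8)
The plan is to treat Lemma~\ref{lem:permtrick} as a purely algebraic statement about the commuting family of projectors $\{p_i^N,q_i^N\}_{i\le N}$; no analysis enters. The only ingredients are that operators acting on distinct tensor factors of $L^2(\R^{3N})=L^2(\R^3)^{\otimes N}$ commute, that $p_i^N q_i^N=q_i^N p_i^N=0$, and that $p_i^N,q_i^N$ are idempotent. For $a\in\{0,1\}^N$ I would write $T_a:=\Prod_{j=1}^N\#_j(a_j)$ with $\#_j(0)=p_j^N$ and $\#_j(1)=q_j^N$, so that $P_k=\sum_{|a|=k}T_a$, and then factor $T_a=T_a^{\le r}T_a^{>r}$ into its parts on the first $r$ and on the last $N-r$ tensor factors; these two factors commute with each other, and $T_a^{>r}$ commutes with $A_r$, $Q^1_r$ and $Q^2_r$.

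For the first identity, fix $Q_r=\#_1\cdots\#_r$. On each factor $i\le r$ the operator contributed by $Q_r$ and the one contributed by $T_a$ both lie in $\{p_i^N,q_i^N\}$ and therefore commute (their product is the common idempotent when they agree, and $0$ when they differ), while $Q_r$ acts as the identity on factors $i>r$. Hence $Q_r$ commutes with every $T_a$, and so with $P_k=\sum_{|a|=k}T_a$.

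The content is the second identity, and the idea is to ``box in'' $A_r$ so that it meets only the part of $\hat f$ living on factors $r+1,\dots,N$, where it commutes freely. Writing $Q^2_r$ as a product over the first $r$ factors with exactly $n_2$ of them a $q$-projector, one checks that $T_a^{\le r}Q^2_r=Q^2_r$ when $a|_{\{1,\dots,r\}}$ matches the $0/1$ pattern of $Q^2_r$ and $T_a^{\le r}Q^2_r=0$ otherwise (a single mismatched factor annihilates the product). Combined with $[T_a^{>r},Q^2_r]=0$ this gives $\hat f\,Q^2_r=\big(\sum_l f(n_2+l)\,P^{>r}_l\big)Q^2_r$, where $P^{>r}_l$ is the analogue of $P_l$ on the last $N-r$ factors. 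Since $A_r$ commutes with each $P^{>r}_l$, I can slide it past that sum; then the same pattern-matching argument applied on the left, using that $Q^1_r$ carries $n_1$ factors $q$, yields $Q^1_r P^{>r}_l=Q^1_r P_{l+n_1}$ for every $l$. Substituting and reindexing $m=l+n_1$ turns $Q^1_r\sum_l f(n_2+l)P^{>r}_l$ into $Q^1_r\sum_m f(m+n_2-n_1)P_m=Q^1_r\widehat{\tau_n f}$ with $n=n_2-n_1$, which is exactly $Q^1_r A_r\hat f Q^2_r=Q^1_r\widehat{\tau_n f}A_r Q^2_r$.

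I expect the only genuine obstacle to be bookkeeping: keeping the two shift counts $n_1,n_2$ straight, stating cleanly that a single mismatched factor kills $T_a^{\le r}Q^1_r$ or $T_a^{\le r}Q^2_r$, and noting that extending $f$ by zero outside $\{0,\dots,N\}$ in the reindexing step is harmless because the corresponding operators $Q^1_rP_m$ vanish. Beyond that, everything reduces to the commutation algebra of $\{p_i^N,q_i^N\}$, and the argument is short once this notation is in place.
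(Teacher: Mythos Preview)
Your proof is correct: the first identity follows immediately from the pairwise commutation of the elementary projectors, and for the second identity your ``pattern matching'' computation $T_a^{\le r}Q^2_r\in\{Q^2_r,0\}$ together with the split $T_a=T_a^{\le r}T_a^{>r}$ and the commutation $[A_r,P^{>r}_l]=0$ does exactly what is needed; the reindexing $m=l+n_1$ and the observation that $Q^1_rP_m=0$ outside the natural range are handled properly.

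There is nothing to compare against here: the paper does not give its own proof of Lemma~\ref{lem:permtrick} but simply refers to \cite[Lemma~A.1]{kolokoltsov2022quantum}. Your self-contained algebraic argument is precisely the standard one underlying that reference (and the deterministic version in \cite{knowles2010mean}); you have in effect reproduced it. One cosmetic point: the paper defines $\hat f=\sum_{k=1}^N f(k)P_k$ starting at $k=1$, so if you want your reindexed sum $\sum_m(\tau_n f)(m)P_m$ to match literally you should note that the $m=0$ term is harmless (either because $Q^1_rP_0=0$ whenever $n_1\ge 1$, or because in the only application $f=m_N^{\alpha}$ one has $f(0)=0$).
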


The proof of this lemma in a stochastic regime can be found in \cite[Lemma A.1.]{kolokoltsov2022quantum}. 

We are now equipped to study the last term $B^2_j$. First, notice that the mean field term $V^{|\phi^{MF,j}_t|^2}_1$ acts only on the particle $1$. Hence $p_j^N$ and  $V^{|\phi^{MF,j}_t|^2}_1$ commute, which implies that this term disappears in $B^2_j$ (since $p_j q_j = 0$). Second, by applying twice Cauchy-Schwarz inequality, we obtain
\begin{eqnarray*}
\E B^2_j &=& \E |(\Psi^N_t, p_1^N p_j^N V_{1,j} \widehat{m_N}^{1/2}\widehat{m_N}^{-1/2} q_j^N q_1^N \Psi_t^N)_{L^2_x}| \\
&=& \E |(\Psi^N_t, p_1^N p_j^N \widehat{\tau_2 m_N}^{1/2}V_{1,j} \widehat{m_N}^{-1/2} q_j^N q_1^N \Psi_t^N)_{L^2_x}| \\
&\leq& \E |V_{1,j}\widehat{\tau_2 m_N}^{1/2}p_j^N p_1^N \Psi^N_t|_{L^2_x} |\widehat{m_N}^{-1/2}q_j^N q_1^N \Psi^N_t|_{L^2_x} \\
&\leq&\big(\E |V_{1,j}\widehat{\tau_2 m_N}^{1/2}p_j^N p_1^N \Psi^N_t|_{L^2_x}^2\big)^{1/2}\big(\E |\widehat{m_N}^{-1/2}q_j^N q_1^N \Psi^N_t|_{L^2_x}^2\big)^{1/2}\\
&:=& \sqrt{\alpha} \sqrt{\beta}.
\end{eqnarray*}
By Lemma \ref{lem:permtrick}, one can notice that
\begin{equation*}
    [p_1^N p_j^N, P^N_k] = 0,
\end{equation*}
and thus $p_1^N p_j^N$ commutes with $\widehat{\tau_2 m_N}$. Then, by applying Lemma \ref{lem:permtrick} and the previous remark we have
\begin{equation*}
    \E |V_{1,j}\widehat{\tau_2 m_N}^{1/2}p_1^N p_j^N \Psi^N_t|_{L^2_x}^2 \leq |V|_{L^{\infty}_x}^2 \E |\widehat{\tau_2 m_N}^{1/2}\Psi^N_t|_{L^2_x}^2 = |V|_{L^{\infty}_x}^2\E (\Psi^N_t, \widehat{\tau_2 m_N}\Psi^N_t)_{L^2_x}.
\end{equation*}
Moreover, a quick computation shows that
\begin{equation*}
    \widehat{\tau_2 m_N} = \widehat{m_N} + \Frac{2}{N},
\end{equation*}
since by definition of the operators $P_k^N$
\begin{equation*}
    \sum_{k=0}^N P_k^N = I_d.
\end{equation*}
Therefore, we have
\begin{equation}
    \label{eq:boundB2jI}
    \alpha \leq |V|_{L^{\infty}_x}^2\big(\E (\Psi^N_t, \widehat{m_N}\Psi^N_t)_{L^2_x} + \Frac{2}{N}\big) = |V|_{L^{\infty}_x}^2( I^{N,1}(t) + \Frac{2}{N}),
\end{equation}
where we used once again the fact that the random variables $(q_{j,t}^N)_{j\leq N}$ have the same law.

Finally, the term $\beta$ can be tackled in the same way as in the proof of Proposition A.$2$. in \cite{kolokoltsov2022quantum} (estimation of the term $(91)$) and it yields
\begin{equation}
    \label{eq:boundB2jII}
    \E |\widehat{m_N}^{-1/2}q_j^N q_1^N \Psi^N_t|_{L^2_x}^2 \leq I^{N,1}(t).
\end{equation}
This computations relies on some careful algebraic manipulations and the equality in law of the random variables $(q_{j,t}^N)_{j\leq N}$. Therefore, combining \eqref{eq:boundB2jI} and \eqref{eq:boundB2jII}, we finally obtain
\begin{equation*}
    \begin{aligned}
     \E B^2_j & \leq |V|_{L^{\infty}_x} \big(I^{N,1}(t) + \Frac{2}{N}\big)^{1/2} \big(I^{N,1}(t)\big)^{1/2}\\
     &\leq 2 |V|_{L^{\infty}_x}(I^{N,1}(t) + \Frac{1}{N}),
    \end{aligned}
\end{equation*}
thanks to Young's inequality, and which is the required bound.
\hfill 
$\square$
\end{appendix}

\section*{Acknowledgments}
G. Guo acknowledges financial support Bourse by \emph{Institut Europlace de Finance}.

\bibliography{biblio.bib}

\end{document}